\documentclass[12pt]{article}
\usepackage{arydshln}
\usepackage{amsmath}
\usepackage{multirow}
\usepackage{amssymb}
\usepackage{amsfonts}
\usepackage{setspace}
\usepackage{hyperref}
\usepackage{cleveref}
\usepackage{enumitem}
\usepackage{enumerate}
\usepackage{amsthm}
\usepackage{epsf}
\usepackage{graphicx}
\usepackage{changepage}
\usepackage{cases}
\usepackage[justification=centering]{caption}
\usepackage{cite}
\usepackage{xcolor}
\usepackage{amsmath}
\usepackage{amssymb}

\allowdisplaybreaks
\def\mineappendix{
        \setcounter{section}{1}
        \setcounter{subsection}{1}
        \def\thesection{\Alph{section}}
        \def\sectionap{\@startsection  {section}{1}{\z@}
                        {-3.5ex plus-1ex minus-.2ex} {0ex plus.2ex}
                        {\reset@font\Large\bf  Appendix:  \, }
                        }
        }
\makeatother
\def\Proclaim #1. #2\par{\bigbreak\noindent{\sc#1.\enspace}{\it#2}\par}

\font\Bbbfont=msbm10
\newfam\msbfam
\textfont\msbfam=\Bbbfont \textfont\msbfam=\Bbbfont

\def\pa{\partial}

\def\wt{\widetilde}

\def\wt{\widetilde}

\def\nn{\nonumber}

\def\nd{\noindent}

\newtheorem{Theorem}{Theorem}
\newtheorem{Lemma}{Lemma}

\title{Vortex Filaments in Hermitian Reductive Lie Algebras}

\author{Qing Ding \footnote{Email:qding@fudan.edu.cn}\\
	\it Department of Mathematics, Wenzhou University \\
\it Wenzhou 325035, P.R. China\\
and School of Mathematical Sciences\\
Fudan University, Shanghai 200433, P.R. China\\
Xiayu Dong \footnote{Email:xydong1996@163.com}\\
	\it School of Mathematical Sciences  \\
	\it Xiamen University, Xiamen 361005, P.R. China\\
Shiping Zhong \footnote{Email: zhongshiping@gnnu.edu.cn}\\
	\it School of Mathematics and Computer Sciences,  \\
\it Gannan Normal University, Ganzhou 341000, P.R. China}
	\date{}

\usepackage{float}
\begin{document}

\maketitle

\begin{abstract}
It is well-known that the investigation of vortex filaments (i.e., moving curves) in the Euclidean 3-space $\mathbb R^3$ is an attractive topic both in physics and mathematics. The theory consists mainly of the three vortex models, up to the third-order approximation. Such a theory has been successfully extended to Hermitian symmetric Lie algebras in mathematics with physical and geometrical backgrounds. This article is devoted to developing it to Hermitian reductive Lie algebras in a purely geometric way. The three vortex models obtained in this article fulfill that when the Hermitian reductive Lie algebra ${\mathfrak g}$ equi-collapses to a Hermitian symmetric Lie algebra ${\mathfrak h}$, they revert respectively to those in ${\mathfrak h}$.

\end{abstract}

\nd Keywords: Moving curve, Homogeneous space, Prescribed curvature representation

\nd AMS Classification: 76B47, 53A04, 53C30, 14M15, 17B80, 35Q51

\section *{\S 1. Introduction}
Vortex filaments in fluid physics, or moving
curves in a Riemannian or pseudo-Riemannian manifold in mathematics, are an attractive topic both in physics and mathematics.
The study can be traced back to
Helmholtz in 1858 (see \cite{Hel, Hel*}) and Kelvin in 1867 (see
\cite{Ke}). Since then, the vortex filament dynamics has been studied extensively
(see \cite{AH,Da,Hama,Lamb,FM,FMo,MGK}), as one
need to characterize the motion behavior of vortex filaments
appearing in various physical models, such as kinematics of
interfaces in crystal growth \cite{Lan,BK}, viscous fingering in a
Hele-Shaw cell \cite{ST}, charged fluid in a neutralizing background
\cite{UIY}, superconductors, superfluid \cite{BJOS}, etc.

The systematic investigation of vortex filaments in the Euclidean 3-space $\mathbb R^3$ was initialized by Arms and Hama (see \cite{AH}) in 1965 via so-called
``localized induction equation (LIE)''. The model they obtained is
also called the binormal motion of space curves in $\mathbb R^3$, first derived by Luigi Sante Da Rios
in 1906 (\cite{Da}). This development has effectively stimulated mathematical methods and techniques to reveal the analytic and geometric properties, hidden structures and symmetries of vortex filaments. The first three (physical) corrected
vortex models in $\mathbb
R^3$ are the Da Rios's model: ${\gamma}_t=\gamma_{x}\times \gamma_{xx}$ (\cite{Da,AH}), the Fukumoto-Miyazaki's model: ${\gamma}_t={\gamma}_{xxx}+\frac{3}{2}{\gamma}_{xx}\times({
\gamma}_x\times {\gamma}_{xx})$ (\cite{FM}) and the Fukumoto-Maffatt's model: ${\gamma}_t=\bigg\{\nu[\kappa^2\tau{\bf
T}+(2\kappa_x\tau+\kappa\tau_x){\bf
N}+(\kappa\tau^2-\kappa_{xx}){\bf B}]+\mu\kappa^3{\bf B}\bigg\}$ (\cite{FMo}), in which $\{{\bf T},{\bf N},{\bf B}\}$ is the Frenet
frame, $\kappa$ and $\tau$ are respectively the curvature and torsion along the filament curve ($\nu$ and $\mu$ are free real parameters).
The physical significance of the above leading-order, second-order and third-order vortex models was given in \cite{Da,AH,FM,FMo}. The study of dynamical and geometric properties of these three models constitute the theory of vortex filaments in $\mathbb R^3$, up to the third-order approximation, in mathematics.
The leading-order and the second-order models are completely integrable systems, but the third-order one is not the case except for certain choices of parameters $\mu$ and $\nu$ (e.g., $\mu=-\frac{1}{2}\nu$).
This gap phenomenon shows a mysterious and interesting aspect of the dynamics of moving curves in $\mathbb R^3$. In order to exploit geometric properties of these vortex models, the geometric concepts of Schr\"odinger flow, geometric KdV flow and generalized bi-Schr\"odinger flow
are proposed respevtively in \cite{DW}, \cite{SW} and \cite{DW2} (refer to \S2, for a brief introduction of them). A detailed description about analytic and geometric properties of them in $\mathbb R^3$ is stated and summarized in Introduction in \cite{DW2}. Below, we make a table to summarize it.
\begin{table}[!h]
\centering Known vortex models in $\mathbb R^3$ up to the third-order approximation

and their geometric interpretations

\footnotesize
\begin{tabular}{|c|c|c|c|}
	\hline
	 & Vortex models in $\mathbb R^3$ & Geometric interpretation& By  Hasimoto transform\\
	\hline
Leading-order & Da Rios's model &  Schr\"{o}dinger flow from & NLS:\\
 vortex model & $\gamma_t=\gamma_x\times\gamma_{xxx}$ (integrable)& $\mathbb R $ to $\mathbb S^2$ & ~$\varphi_t=\varphi_{xx}+2|\varphi|^2\varphi$ \\
	\hline
2nd-order & Fukumoto-Miyazaki's & Geometric KdV flow from & cKdV:   \\
vortex model & model (integrable) & $\mathbb R $ to $\mathbb S^2$  &  $\varphi_t=\varphi_{xxx}
+6|\varphi|^2\varphi_x$ \\
	\hline
3rd-order &Fukumoto-Moffatt's& Generalized bi-Schr\"{o}dinger  & A fourth-order  \\
 vortex model & model (non-integrable)  & flow from $\mathbb R $ to $\mathbb S^2$  & NLS-like equation \\
    \hline
\end{tabular}
\end{table}

\medskip
On the other hand, in the past decades, much effort has been
devoted to extending the theory of ``localized induction hierarchy'' in the 3-dimensional Euclidean space $\mathbb R^3$ to higher
dimensional spaces. A successful generalization in literature is the ``localized induction matrix hierarchy'' associated to a Hermitian symmetric
Lie algebra (see, for
example, \cite{DW2,FK,LP,OS,TeUh}), with
physical and geometrical backgrounds. Fordy and Kulish in
\cite{FK} constructed the matrix nonlinear Schr\"odinger equation associated to the Hermitian symmetric Lie algebra ${\mathfrak h}=\mathfrak u(n)$ ($n\ge2$). They showed that the matrix nonlinear Schr\"odinger
equation is in fact equivalent to a generalized
Heisenberg ferromagnet model. Langer and Perline applied the technique of
Sym (\cite{Sym}) and Pohlmeyer (\cite{Poh}) to produce a geometric
realization of the matrix nonlinear Schr\"odinger equation in ${\mathfrak h}=\mathfrak u(n)$ (\cite{LP}). In
this process, they illuminated the ``localized
induction matrix hierarchy'' for arclength-parameterized curves
evolving in ${\mathfrak h}=\mathfrak u(n)$. The first
three of the hierarchy are:
\begin{eqnarray}
\gamma_t&=&\gamma_x,\,\nonumber\\
\gamma_t&=&{-}[\gamma_x,\,\gamma_{xx}],\,\label{GLIE}\\
\gamma_t&=&{-(}\gamma_{xxx}+\frac{3}{2}[\gamma_{xx},\,
[\gamma_x,\,\gamma_{xx}]]{)},\label{GKDV}
\end{eqnarray}
where $[\cdot,\cdot]$ denotes the Lie bracket in ${\mathfrak h}$. The models (\ref{GLIE}) and (\ref{GKDV}) are exactly the leading-order (i.e., generalized
Heisenberg ferromagnet) and the second-order vortex
models in ${\mathfrak h}$, since when ${\mathfrak h}=\mathfrak{su}(2)$, Eqs.(\ref{GLIE}) and (\ref{GKDV}) revert to the Da Rios's and Fukumoto-Miyazaki's models, respectively.
In order to find the third-order vortex model in ${\mathfrak h}$, one has to overcome the difficulty that arises from the non-integrability. In 2018 the first author and Wang derived in \cite{DW2} the following model:
\begin{eqnarray}
\gamma_t&=&\alpha\bigg([\gamma_x,\,\gamma_{xxxx}]
-[\gamma_{xx},\,\gamma_{xxx}]\bigg)+(4\beta-2\alpha)
\left[\gamma_{x},\,
\gamma_{xx}\gamma_x^{-1}\gamma_{xx}
\gamma_x^{-1}\gamma_{xx}\right],\label{GFM1}
\end{eqnarray}
with $\gamma_x\in\{E^{-1}\sigma_3E | E\in U(n)\}\hookrightarrow \mathfrak u(n)$,
in ${\mathfrak h}=\mathfrak u(n)$ by using the Hamiltonian gradient flow of a fourth-order functional which is called the generalized bi-Schr\"odinger flow, where $\alpha, \beta$ are real parameters and $\sigma_3$ is given by (\ref{04}) below.
It is shown in \cite{DW2} that when ${\mathfrak h}=\mathfrak{su}(2)$, Eq.(\ref{GFM1}) reverts to the Fukumoto-Moffatt's model in $\mathbb R^3$
(when $\alpha=-\nu$ and $\beta=-\frac{1}{4}\mu$). Therefore, three basic vortex models in $\mathfrak h=\mathfrak u(n)$ are established that constitute the theory of vortex filaments in Hermitian symmetric Lie algebras, up to the third-order approximation. Below, we also make a table to present these models and together with their geometric interpretations.

\begin{table}[!h]
\centering Vortex models in the Hermitian symmetric Lie algebra ${\mathfrak h}=\mathfrak u(n)$ up to the

third-order approximation and their geometric interpretations
\footnotesize
\begin{tabular}{|c|c|c|c|}
	\hline
	$$& Vortex in a symmetric Lie  & Geometric & By gauge \\
	$$& algebra $\mathfrak u(u)$ & interpretation &  transformation \\
	\hline
Leading-order & Generalized Heisenberg ferro- & Schr\"{o}dinger flow from $\mathbb R $ to & Matrix NLS \\
 vortex model & magnet model (\ref{GLIE}) (integrable)& Grassmannian &\\
	\hline
2nd-order & Langer and Perline's & Geometric KdV flow from $\mathbb R $ to&  Matrix cKdV  \\
vortex model & model (\ref{GKDV}) (integrable) &  Grassmannian  &  \\
	\hline
3rd-order & Ding and Wang's & Generalized bi-Schr\"odinger flow  &4th-order matrix \\
	vortex model & model (\ref{GFM1}) (non-integrable)  & from $\mathbb R^1$ to Grassmannian  &  NLS-like equation\\
    \hline
\end{tabular}
\end{table}

The aim of this paper is to develop the three basic vortex models and to characterize their dynamical properties, up to the third-order approximation, in a Hermitian reductive Lie algebra ${\mathfrak g}=\mathfrak u(n)$ in a purely geometric way. The exploration is based on the works of Fordy and Kulish (\cite{FK}) in deducing the generalized nonlinear Schr\"odinger equation (i.e., the $N$-wave equation called in \cite{FK}), Langer and Perline (\cite{LP}) in constructing the geometric recursion operator in a Hermitian symmetric Lie algebra and the first author and his collaborators (\cite{dingzhu,DW}) in introducing the geometric concept of PDEs with prescribed curvature representations. We point out that the constructions of the leading-order and the second-order vortex models in the Hermitian reductive Lie algebra ${\mathfrak g}=\mathfrak u(n)$ can still be handled along the line of integrable systems. However, the method cannot be applied to obtain the third-order vortex model since it is non-integrable in general. Meanwhile, the geometric method via generalized bi-Schr\"odinger flows applied in \cite{DW2}
cannot be used directly too.
So, our strategy is to generalize the geometric recursion operator to Hermitian reductive Lie algebras and use it to derive the third-order vortex model with the aid of PDEs with prescribed curvature representation in the category of Yang-Mills theory.
It is well-known that, associated with a Hermitian reductive non-symmetric Lie algebra, there is a corresponding homogeneous complex flag manifold. Therefore, undoubtedly, some geometric and algebraic properties of complex flag manifolds are applied.
As a by-product, the Fordy-Kulish's NLS and the Langer-Perline's LIE hierarchies are extended successfully to a Hermitian reductive Lie algebra. Their interrelationship is well determined.
It looks very natural that the three basic vortex models we founded in a Hermitian reductive Lie algebra are somewhat more complicated than those in a Hermitian symmetric Lie algebra. Despite of this, these models also have nice analytic and geometric properties. When the Hermitian reductive Lie algebra ${\mathfrak g}=\mathfrak u(n)$ equi-collapses to a Hermitian symmetric Lie algebra ${\mathfrak h}=\mathfrak u(n)$, the three basic vortex models in ${\mathfrak g}$ revert exactly and respectively to the models (\ref{GLIE}), (\ref{GKDV}) and (\ref{GFM1}) in ${\mathfrak h}$. This relates to the fascinating mechanism hidden in the reduction of the complex flag manifold ${\rm F}_s^{\rm \mu}(k_1,\cdots,k_s)$ to the symmetric space $G(k,n-k)$. We believe that the exploitation of vortex filaments in Hermitian reductive Lie algebras is significant and applicable in the theory of moving curves in geometry and Lie algebras.

The paper is organized as follows. In section 2, we briefly review some basic facts concerning Lie algebras, homogeneous spaces, the geometric recursion operator in a Hermitian symmetric Lie algebra and PDEs with prescribed curvature representation. In \S 3, based on the extension of the geometric recursion operator to the Hermitian reductive Lie algebras and PDEs with zero or prescribed curvature representation, we derive three basic vortex models in a Hermitian reductive non-symmetric Lie algebra $\mathfrak g=\mathfrak u(n)$. In \S4, we show that in the Hermitian reductive Lie algebra $\mathfrak g=u(n)$, the $(L+2)$-th Fordy-Kulish's NLS equation is equivalent to the $L$-th Langer-Perline's LIE equation, that extends greatly the famous Da Rios-NLS correspondence. In \S5, we prove that when the Hermitian reductive Lie algebra ${\mathfrak g}$ equi-collapses to a Hermitian symmetric Lie algebra ${\mathfrak h}$, the obtained three basic models revert exactly and respectively to those in ${\mathfrak h}$.

\section *{\S2. Preliminaries}

In this section, we review briefly some basic facts concerning Lie algebras, symmetric or homogeneous spaces, and the geometric recursion operator in a Hermitian symmetric Lie algebra. The geometric concepts of Schr\"odinger flow (\cite{DW}), geometric KdV flow (\cite{SW}),
generalized bi-Schr\"odinger flow (\cite{DW2}) and PDEs with zero or prescribed curvature representations (\cite{dingzhu,DW2}) are also reviewed concisely.

In terms of the Cartan-Weyl basis, a complex simple Lie algebra ${\mathfrak g}$ has the following commutation relations (refer to \cite{FK,He}, for example)

(i) $[h_i,h_j]=0,~~\forall h_i,h_j\in {\mathfrak k}$,

(ii) $[h,e_{\alpha}]=\alpha(h)e_{\alpha},~~\forall h\in {\mathfrak k},~\alpha\in\Phi$,

(iii) $[e_{\gamma},e_{-\gamma}]=h_{\gamma}=\sum\limits^l_{i=1}d_ih_i$,

(iv) $[e_{\gamma},e_{\zeta}]=\left\{\begin{array}{c}N_{\gamma,\zeta}~e_{\gamma+\zeta},~~~~0\not=\gamma+\zeta\in \Phi\\
0,~~~~~~~~~~~~~~~~~~~\gamma+\zeta\notin \Phi.\end{array}\right.$

\noindent Here
${\mathfrak k}$ is the Cartan subalgebra, which is the maximal abelian subalgebra of diagonalizable elements in ${\mathfrak g}$.
${\mathfrak k}$ has basis $\{h_i\}^l_{i=1}$ and $d_i$ are the components of $[e_{\gamma},e_{-\gamma}]\in {\mathfrak k}$ with respect to this basis. The number $l$ is the rank of ${\mathfrak g}$.
$\alpha: {\mathfrak k}\to \mathbb C$ are linear functionals, called roots, on ${\mathfrak k}$ and their values on given $h\in {\mathfrak k}$ are eigenvalues of the matrix $ad_h$. The corresponding eigenvectors $e_{\alpha}$ are called root vectors. $\Phi$ is the set of roots.
The coefficients $N_{\alpha,\zeta}$ are the most complicated part of these commutation relations.

A homogeneous space is a differential manifold $M$ endowed with a transitive action of a Lie group ${\bf G}$. The subgroup of ${\bf G}$ which leaves a given point $o\in M$ fixed is called the isotropy group at $o$ and is expressed as
\begin{eqnarray}
{\bf K}={\bf K}_o=\{g\in G~|~g\cdot o=o\}.\nonumber\end{eqnarray}
Such a $M$ is in fact diffeomorphic to the coset space ${\bf G/K}= \{gK: g \in G\}$.
Let $\mathfrak {g}$ and $\mathfrak {k}$ be respectively the Lie algebras of ${\bf G}$ and ${\bf K}$, and ${\mathfrak m}$ be the vector space complement of ${\mathfrak k}$ in ${\mathfrak g}$. Then we have
\begin{eqnarray}
{\mathfrak g}={\mathfrak k}\oplus {\mathfrak m},~~[{\mathfrak k},{\mathfrak k}]\subset {\mathfrak k}.\nonumber\end{eqnarray}
At this moment, we only have
$[{\mathfrak k},{\mathfrak k}]\subset {\mathfrak k}$, but no information about $[{\mathfrak k},{\mathfrak m}]$ and $[{\mathfrak m},{\mathfrak m}]$.

When ${\mathfrak g}={\mathfrak k}\oplus {\mathfrak m}$ fulfils the conditions:
\begin{eqnarray}\label{01}
[{\mathfrak k},{\mathfrak k}]\subset {\mathfrak k},~~[{\mathfrak k},{\mathfrak m}]\subset {\mathfrak m},\end{eqnarray}
then $M$, or equivalently $G/K$, is called a {reductive} homogeneous space and, correspondingly, ${\mathfrak g}$ is called a reductive Lie algebra.
In this case, it is well-known that the subspace ${\mathfrak m}$ is identified with the tangent space $T_oM$ of $M={\bf G/K}$ at the point $o$ and
$G/K$ possesses canonically defined connections, but not every one of them is Levi-Civita.
Moreover, when the reductive Lie algebra ${\mathfrak g}={\mathfrak k}\oplus {\mathfrak m}$ satisfies the further condition:
\begin{eqnarray}\label{02}
~~[{\mathfrak m},{\mathfrak m}]\subset {\mathfrak k},\end{eqnarray}
then $M$, or equivalently $G/K$, is called a symmetric homogeneous space and hence ${\mathfrak g}$ is called a symmetric Lie algebra.
In this case, the decomposition of ${\mathfrak g}={\mathfrak k}\oplus {\mathfrak m}$ is orthogonal with respect to the Killing metric on ${\mathfrak g}$ and $G/K$ carries a canonical Levi-Civita connection induced by the Killing metric.
We would point out that, among reductive homogeneous spaces with the group action being the adjoint action, there are two classes: one is the class of symmetric spaces and another one is the class of flag manifolds which are not symmetric.

Let's consider the unitary group $G=U(n)$ of degree $n$. Its Lie algebra is $\mathfrak{u}(n) = \{ X \mid X = -X^*,~ X \in gl(n,\mathbb C) \}$, in which $gl(n,\mathbb C)$ stands for the set of all $n\times n$ complex matrix and $*$ denotes the operator of the complex conjugate transpose. Let $\omega$ be a diagonal matrix in $\mathfrak{u}(n)$ given by
\begin{eqnarray}\omega = \frac{\sqrt{-1}}{2}
\begin{pmatrix}
\mu_1 I_{k_1} &0 & &&0 \\
0& \mu_2 I_{k_2} & & &0 \\
& & & \ddots & \\
0& 0& & & \mu_s I_{k_s}
\end{pmatrix},\label{03}\end{eqnarray}
where $2\le s\le n$, $I_k$ stands for the $k\times k$ unit matrix, $k_i\in\mathbb{N}_+ \text{~with~}\sum\limits_{i=1}^s k_i=n$ and $\mu_i\in \mathbb R~(i=1,2,\cdots,s)$ with $\mu_i \neq \mu_j, \forall~ 1\leq i,j\leq s, ~i\neq j$. We denote by $\mathbb C_{k_i\times k_j}$ the set of all the complex $(k_i\times k_j)$-matrices. The Lie algebra $\mathfrak{u}(n)$ of $U(n)$ is now decomposed with respect to $\omega$ by
\begin{eqnarray}
\mathfrak{u}(n)={\mathfrak k}\oplus {\mathfrak m},\label{decomp}
\end{eqnarray}
where
$$
{\mathfrak k}=\hbox{\rm
Kernel}(ad_{\omega})=\left\{\left. \begin{pmatrix}
A_1 &0 & & &0 \\
0& A_2 & & &0 \\
& & & \ddots & \\
0& 0& & & A_s
\end{pmatrix}\in \mathfrak{u}(n)~\right|~A_i\in \mathfrak u(k_i)\right\}$$
and
$${\mathfrak m}=\left\{\left.\left(\begin{array}{cccc}0&\varphi_{12}&\cdots&\varphi_{1s}\\
-{\varphi}_{12}^*&0&\cdots&\varphi_{2s}\\
\vdots&\vdots&\ddots&\vdots\\
-{\varphi}_{1s}^*&-{\varphi}_{2s}^*&\cdots&0\end{array}\right)~\right|~\hbox{block~matrices}~\varphi_{ij}\in \mathbb {C}_{k_i\times k_j}, 1\leq i<j\leq s\right\}.
$$
It is obvious that the above $\mathfrak k$ and $\mathfrak m$ satisfy the conditions of a reductive Lie algebra, i.e., $[{\mathfrak k},{\mathfrak k}]\subset {\mathfrak k},~~[{\mathfrak k},{\mathfrak m}]\subset {\mathfrak m}$. This is one of the standard Hermitian reductive Lie algebras of Type A III.
For the space ${\rm F}_s^{\rm \mu}(k_1,k_2,\cdots,k_s) = \{ E^{-1} \omega E | E\in U(n) \}$, where ${\rm \mu}=(\mu_1,\cdots,\mu_s)$, we see that the group $U(n)$ acts transitively on it by the adjoint action. The isotropy subgroup at the point $\omega$ is $K_\omega = \{E\in U(n)|E^{-1}\omega E=\omega\}\cong U(k_1)\times U(k_2)\times \cdots \times U(k_s)$. Hence, ${\rm F}_s^{\rm \mu}(k_1,k_2,\cdots,k_s)$ is actually diffeomorphic to the complex homogeneous coset space $U(n)/U(k_1)\times U(k_2)\times \cdots \times U(k_s)$. Especially, ${\rm F}_n^{\rm \mu}(1,1,\cdots,1)=U(n)/U(1)\times U(1)\times \cdots \times U(1)$ is the full complex flag manifold. When $3\le s\le n-1$, ${\rm F}_s^{\rm \mu}(k_1,k_2,\cdots,k_s)$ is a generalized complex flag manifold.
When $s = 2$, $k_1=k, k_2=n-k$ for some integer $1\le k<n$ and $\mu_1 = 1 ,\mu_2 =-1$, we replace $\omega$ in this case by
\begin{eqnarray}\label{04}\sigma_3=\frac{\sqrt{-1}}{2}\left(\begin{array}{cc}
I_k&0\\0&-I_{n-k}\end{array}\right).\end{eqnarray}
Then ${\rm F}_2^{(1,-1)}(k,n-k)= U(n)/U(k)\times U(n-k)=\{ E^{-1} \sigma_3 E | E\in U(n) \}$ is actually a complex symmetric space, which is exactly the complex compact Grassmannian manifold $Gr(k,n-k)$. It is well-known that $Gr(k,n-k)$ is a K\"ahler manifold which has $J=ad_{\sigma_3}$ as its compatible complex structure.

Next, we turn to the geometric recursion operator on a Hermitian symmetric Lie algebra introduced by Langer and Perline in \cite{LP}. It is well-known that
a $1+1$-dimensional nonlinear integrable equation arises as the integrability condition for an overdetermined
linear system:
\begin{eqnarray}
\phi_x=U(x,t;\lambda)\phi,\quad \phi_t=V(x,t;\lambda)\phi, \label{05}
\end{eqnarray}
where $x$ is the position variable, $t$ is the time variable and $\lambda$ is a free spectral parameter. The eigenfunction $\phi=\phi(x,t;\lambda)$ takes its values in a Lie group $G$, and meanwhile, the coefficients $U(x,t;\lambda)$ and $V(x,t;\lambda)$ take their values in the Lie algebra $\mathfrak g$ of $G$. Associated with the Hermitian symmetric Lie algebra $\mathfrak h=\mathfrak u(n)={\mathfrak k}\oplus{\mathfrak m}$ given by (\ref{decomp}) with $s = 2$, $k_1=k, k_2=n-k$ for some integer $1\le k<n$ and $\mu_1 = 1 ,\mu_2 =-1$, Fordy and Kulish in \cite{FK} set
$U(x,t;\lambda)=\lambda\sigma_3+Q(x,t)$ with the potential function $Q=Q(x,t)\in\mathfrak m$ and $V(x,t;\lambda)=V_L:=\sum\limits_{j=0}^{L}V^{(j)}(x,t)\lambda^{L-j}$ following so-called the {\it polynomial ansatz} for a given positive integer $L\ge2$, in which the coefficient $V^{(j)}(x,t)$ ($0\le j\le L$) is a function of $Q$ and its derivatives independent of $\lambda$. Cross-differentiating the equations in (\ref{05}) gives the integability condition:
$
U_t-(V_L)_{x} + [U,V_L] = 0.
$
From the integrability condition, one may obtain $V_L$ explicitly and hence the $(L+1)$-th Fordy-Kulish's NLS equation. For example, in the case of $L=2$, one finds that $V_2=\lambda^2 \sigma_3+\lambda Q+\frac{1}{2}[Q,[Q,\sigma_3]]+[Q_x,\sigma_3]$ and hence the matrix nonlinear Schr\"odinger equation of focusing type: $\sqrt{-1}Q_t+Q_{xx}+2QQ^*Q=0$.

With the Lax pair (\ref{05}) given above, the derivation of $V=V_L$ can be deduced and summarized as a recursion scheme (refer to \cite{LP}). In fact, by introducing a recursion operator $\widetilde{\mathcal{R}}_{sym}$
which takes $\mathfrak m$-fields $B$ to $\mathfrak m$-fields,
\begin{eqnarray}\label{RT-symmetric}
\widetilde{\mathcal{R}}_{sym}(B)=-(\partial_x-ad_Q\partial^{-1}_xad_Q)JB,
\end{eqnarray}
where $\partial_x^{-1}$ stands for the antiderivative with respect to $x$ and $J=ad_{\sigma_3}$,
we set ${\widetilde X}^{(1)}=JQ$, ${\widetilde X}^{(2)}=\widetilde{\mathcal{R}}_{sym}({\widetilde X}^{(1)})=Q_x$, $\cdots$, ${\widetilde X}^{(L+1)}=\widetilde{\mathcal{R}}_{sym}({\widetilde X}^{(L)})$ and etc. Then the evolution equation $Q_t={\widetilde X}^{(L)}$ gives the $L$-th Fordy-Kulish NLS equation (refer to \cite{LP}) and all these equations constitute the Fordy-Kulish's NLS hierarchy in $\mathfrak h$. It should be mentioned that in this process, the 3rd Fordy-Kulish's NLS hierarchy equation is exactly the matrix nonlinear Schr\"odinger equation (refer to \cite{FK,LP}).

The followings are some facts about Sym-Pohlmeyer curves in the Hermitian symmetric Lie algebra $\mathfrak h=\mathfrak u(n)$ and the related Langer-Perline's LIE hierarchy. Using the Lax pair (\ref{05}) in $\mathfrak h$ indicated above, we set $\{B\}=\phi^{-1}B\phi$ when $B\in \mathfrak h$ and $\{B,C\}=\{[B,C]\}$ for $B,C\in \mathfrak h$, where $\phi=\phi(x,t;\lambda)\in U(n)$ is the eigenfunction of (\ref{05}). Then, we introduce $W(x,t,\lambda)=\phi_\lambda\phi^{-1}$ in this case, and consider the $t$-family curves in {$\mathfrak h$}:
$$
\gamma(x,t;\lambda)=\{W\}=\phi^{-1}\phi_\lambda.$$
It is a direct verification that $\gamma_x=\{W\}_x
=\{\sigma_3\}$. Denoted by $\langle\cdot,\cdot\rangle=-\hbox{tr} (XY)$ the Cartan-Killing metric on $\mathfrak h$, we see that by Ad-invariance of $\langle\cdot,\cdot\rangle$, $\langle\gamma_x,\gamma_x\rangle=\langle\sigma_3,\sigma_3\rangle=l$, where $l=\hbox{dim}(\mathfrak m)$. Hence, $\gamma$ will be a family of arc-length parameterized evolving curves in $\mathfrak h$ with respect to the rescaled Ad-invariant metric $\frac{1}{l}\langle\cdot,\cdot\rangle$ on $\mathfrak h$. This is referred as a family of arclength-parameterized Sym-Pohlmeyer curves, or simply a Sym-Pohlmeyer curve, in $\mathfrak h$.

In the Hermitian symmetric Lie algebra $\mathfrak h=\mathfrak u(n)=\mathfrak k\oplus \mathfrak m$,  the integrability condition of (\ref{05}) and together with the symmetric conditions (\ref{01},\ref{02}) yield $(V_{\mathfrak k})_x=[Q,V_{\mathfrak m}]$. Hence the $\mathfrak k$-component of $V$ is determined by the $\mathfrak m$-component of $V$. This leads Langer and Perline to introduce in \cite{LP} an operator $\mathcal{K}$, which takes $\mathfrak m$-fields to $\mathfrak k$-fields:
    $$
       \mathcal{K}(B_{\mathfrak m})=\partial_x^{-1}[Q,B_{\mathfrak m}],~~ B\in \mathfrak h.
    $$
Thus a Sym-Pohlmeyer vector field is defined of the form $Y=\{\mathcal{K}(B_{\mathfrak m})+B_{\mathfrak m}\}$ for some $\mathfrak m$-field $B_{\mathfrak m}$ (up to integration constant in $\mathcal{K}$). There are two operators introduced in \cite{LP} on vector fields $Y=\{B\}$ along $\gamma$ as follows.

(i) Renormalization operator:
$$
    \mathcal{P}(\{B\})=\{\mathcal{K}(B_{\mathfrak m})+B_{\mathfrak m}\}=\{\partial_x^{-1}[Q,B_{\mathfrak m}]+B_{\mathfrak m}\}.
$$

(ii) Geometric recursion operator (in the symmetric case, it is denoted by $\mathcal{R}_{sym}$):
$$
    \mathcal{R}_{sym}(Y)=-\mathcal{P}([T,\partial_x Y]),
$$
where $T$ is the unit tangent vector field along $\gamma$ (given by $T=\gamma_x=\{\sigma_3\}$).

For a Sym-Pohlmeyer curve $\gamma$ with potential function $Q$ indicated above and $\lambda=0$, let vector fields $X^{(L)}$ be defined along $\gamma$ by the induction: $X^{(0)}=\{\sigma_3\}=T$, $X^{(1)}={\mathcal R_{sym}}(X^{(0)})=\{Q\}=-[\gamma_x,\gamma_{xx}]$, $\cdots$, $X^{(L)}={\mathcal R_{sym}}(X^{(L-1)})$ ($L\ge1$). The curve evolution equation
$\gamma_t=X^{(L)}$ is called the $L$-th Langer-Perline's LIE equation in the Hermitian symmetric Lie algebra $\mathfrak h=\mathfrak u(n)$. All of them constitute the Langer-Perline's LIE hierarchy in $\mathfrak h$ (refer to \cite{LP}).
It is proved in \cite{LP} that there is a nice correspondence between the $L$-th Langer-Perline's LIE equation $\gamma_t=X^{(L)}$ and the $(L+2)$-th Fordy-Kulish's NLS equation $Q_t={\widetilde X}^{(L+2)}$ for arbitrary integer $L\ge1$, which generalizes the famous Da Rios-NLS correspondence saying that if a curve $\gamma$ in $\mathbb R^3$ evolves according to the Da Rios equation, then the associated complex function $\varphi(x,t) = \kappa(x,t) \exp{\left(\sqrt{-1}\int^x
\tau(s,t)ds\right)}$, called the Hasimoto transform  (see \cite{Ha}), evolves according to the focusing nonlinear Schr\"odinger equation, where $\kappa$ and $\tau$ stand for the curvature and torsion of $\gamma$ at $(x,t)$, respectively.

We end this section with a brief review of the geometric concepts of generalized bi-Schr\"odinger flows, Schr\"odinger flows, KdV flows and PDEs with prescribed curvature representations. It is well-known that for a smooth map $u$ from a Riemannian manifold $(M,g)$ to another one $(N,h)$, the energy and the bi-energy functionals are defined respectively by $E(u) = \int_M|du|^2dv_g$ and $E_2(u) = \int_M|(d+d^*)^2u|^2dv_g$, where $d^*$ denotes the co-differential operator of $d$. When the target manifold $N$ is K\"ahler that admits a compatible complex structure $J$, we define a fourth-order generalized bi-energy functional of a map $u$ from $(M,g)$ to $(N,h,J)$ by
$$
E_{\eta,\alpha,\beta}=\eta E(u)+\alpha E_2(u)+\beta\int_M \langle R(\nabla u,J_u\nabla u)J_u\nabla u,\nabla u\rangle dv_g,
$$
where $\eta$, $\alpha$ and $\beta$ are real parameters and $R$ is the curvature operator of $N$. A map $u = u(x,t): M\times [0, T) \to  N$, where $T>0$ is a real number, is
called a generalized bi-Schr\"odinger flow from $M$ to $N$ if $u$ fulfills the equation of
the Hamiltonian gradient flow of $E_{\eta,\alpha,\beta}(u)$ (\cite{DW2}), that is
$$u_t = J_u\nabla E_{\eta,\alpha,\beta}(u).$$
When $\eta\not=0$ and $\alpha=\beta=0$, the generalized bi-Schr\"odinger flow reduces to Schr\"odinger flow (\cite{DW}). Usually, the Schr\"odinger flow (resp. generalized bi-Schr\"odinger flow) from $\mathbb R^1$ to $N$ is called one-dimensional (1-d) Schr\"odinger flow (resp. 1-d generalized bi-Schr\"odinger flow) on $N$ in literature.
The geometric KdV flow of maps $u$ from the real line ${\mathbb R}^1$ into a K\"ahler manifold $(N,J,h)$ (see \cite{SW}) is defined as that $u$ satisfies
\begin{eqnarray}
\frac{\pa}{\pa t}u=\nabla_x^2u_x+\frac{1}{2}R(u_x,J_uu_x)J_uu_x,\label{001}
\end{eqnarray}
where $\nabla_x$ is the covariant derivative
$\nabla_{\frac{\pa}{\pa x}}$ on the pull-back bundle $u^{-1}TN$
induced from the Levi-Civita connection on $N$ and $\nabla_xu$ is denoted by $u_x$.
Finally, we come to the geometric concept of PDEs with zero or prescribed curvature representations in the category of Yang-Mills theory. One sees that the
integrable Eq.(\ref{GLIE}) is rewritten as: $\varphi_t=-[\varphi,\,\varphi_{xx}]$,
where $\varphi={\gamma}_x$. Defining a family of connections with a spectral parameter
$\lambda$  on the trivial bundle $\mathbb R^2\times U(n)$ by
$$
A=\lambda\varphi dx+\left(\lambda^2\varphi-\lambda
[\varphi,\varphi_x]\right)dt,
$$
we have that Eq.(\ref{GLIE}) is equivalent to the zero curvature representation of $A$:
$$
F_A=dA-A\wedge A=-\lambda\left(\varphi_t+[\varphi,\varphi_{xx}]\right)dx\wedge dt=0.
$$
Thus Eq.(\ref{GLIE}) is called as an equation with zero curvature representation. For a given partial differential equation (PDE), if there
exists a $\lambda$-family of connections 1-form $A$ and a $\lambda$-family 2-form
$K$ on the trivial bundle $\mathbb R^2\times U(n)$, such that the equation is equivalent to the holding of the formula:
\begin{eqnarray}
F_A=dA-A\wedge A=K, \label{bi14}
\end{eqnarray}
then it is called an equation with prescribed curvature
representation. When $K=0$, it returns to the zero curvature representation. We should point out that, roughly speaking,
a PDE with prescribed non-zero curvature representation is non-integrable in general.

Terng and Uhlenbeck proved in \cite{TeUh} that the equation of $1$-d Schr\"odinger flow on the Grassmannian manifold $Gr(k,n-k)$ ($1\le k\le n-1$) produces the leading-order model (\ref{GLIE}) in $\mathfrak h$. The first author and He demonstrated in \cite{dinghe} that the second-order vortex model (\ref{GKDV}) in $\mathfrak h$ is equivalent to the equation of geometric KdV flow from $\mathbb R^1$ to $Gr(k,n-k)$.
The first author and Wang showed in \cite{DW2} that the third-order vortex model (\ref{GFM1}) in $\mathfrak h$ is equivalent to the equation of $1$-d generalized bi-Schr\"odinger flow on $Gr(k,n-k)$. In \cite{Onodera}, Onodera generalized the study of 1-d generalized bi-Schr\"odinger flow on complex Grassmannian manifolds to a 1-d fourth-order curve flow on general K\"ahler manifolds.

\section* {\S3 Construction of models in reductive Lie algebras}
In this section, we shall apply techniques in integrable systems to derive the leading-order and the second-order vortex models in the Hermitian reductive Lie algebra $\mathfrak g=\mathfrak u(n)$. Consequently, the Fordy-Kulish's NLS and the Langer-Perline's LIE integrable hierarchies in $\mathfrak g$ are obtained. The non-integrable third-order vortex model in $\mathfrak g$ is derived with the aid of a generalized geometric recursion operator and the geometric concept of PDEs with prescribed non-zero curvature representations. These three basic models in ${\mathfrak g}$ fulfill that when ${\mathfrak g}$ equi-collapses to a Hermitian symmetric Lie algebra ${\mathfrak h}$, they revert exactly and respectively to those known models in ${\mathfrak h}$, as we will see in \S5.

We now focus on the Lax pair (\ref{05}) in the Hermitian reductive Lie algebra $\mathfrak g$. Here, the eigenfunction $\phi=\phi(x,t;\lambda)$ takes its values in the unitary Lie group $G=U(n)$ with its Lie algebra $\mathfrak g={\mathfrak u(n)}=\mathfrak m\oplus\mathfrak k$ being a Hermitian reductive Lie algebra given by (\ref{decomp}).
As done in the case of symmetric Lie algebras, we set analogously $U(x,t;\lambda)=\lambda \omega+Q(x,t)$ with $\omega$ being given by (\ref{03}) and the potential function $Q\in \mathfrak m$, and $V=V'_L=\sum\limits_{j=0}^{L}V^{'(j)}(x,t)\lambda^{L-j}\in \mathfrak g$ is a polynomial ansatz for a given positive integer $L$, in which the coefficient $V^{'(j)}(x,t)$ ($0\le j\le L$) independent of $\lambda$ is a matrix-valued function of the entries of $Q$ and their derivatives. We should mention that the discussions below may be carried out analogously to other types of Hermitian reductive Lie algebras.

By decomposing $V^{'(j)}$ as $V^{'(j)}=V_{\mathfrak m}^{'(j)}+V_{\mathfrak k}^{'(j)}$ with $V_{\mathfrak m}^{'(j)}\in \mathfrak {m}$ and  $V_{\mathfrak k}^{'(j)}\in \mathfrak{k}$, we obtain, from the integrability condition $U_t-(V'_{L})_x+[U,V'_L]=0$, the following recurrence relations:
\begin{align}
V^{'(0)} &=\omega,  \label{eq:1} \\
[\omega,V_{\mathfrak m}^{'(j)}] &= \partial_x V_{\mathfrak m}^{'(j-1)} - [Q, V_{\mathfrak k}^{'(j-1)}] - [Q,V_{\mathfrak m}^{'(j-1)}]_{\mathfrak m}, \quad j=1, \ldots, L, \label{eq:2} \\
\partial_x V_{\mathfrak k}^{'(j)} &= [Q, V_{\mathfrak m}^{'(j)}]_{\mathfrak k}, \quad j=0, \ldots, L, \label{eq:3}
\end{align}
and the corresponding integrable equation
\begin{eqnarray}\label{FKL}
Q_t = \partial_x V_{\mathfrak m}^{'(L)} - [Q, V_{\mathfrak k}^{'(L)}] - [Q,V_{\mathfrak m}^{'(L)}]_{\mathfrak m},
\end{eqnarray}
which is an differential system of the entries of $Q$. Eq.(\ref{FKL}) is called the $(L+1)$-th Fordy-Kulish's NLS equation in $\mathfrak g=\mathfrak u(n)$.
The recursion scheme (\ref{eq:1}-\ref{eq:3}) and the dynamical integrable equation (\ref{FKL}) can be compactly described by introducing a generalized recursion operator ${\widetilde {\mathcal{R}}}_{red}:{\mathfrak m}\rightarrow {\mathfrak m}$ as follows.
\begin{eqnarray}\label{RT-reductive}
\widetilde{X}\mapsto{\widetilde {\mathcal{R}}}_{red}{\widetilde X}=(\partial_x-ad_{Q}\partial_x^{-1}ad_{Q}|_{{\mathfrak k}}-ad_{Q}|_{\mathfrak m})ad_{\omega}^{-1}{\widetilde X},~~{\widetilde X}\in {\mathfrak m},
\end{eqnarray}
where
\begin{eqnarray*}
ad_Q|_{\mathfrak k}:\mathfrak m\to \mathfrak k, X\rightarrow ad_Q|_{\mathfrak k}(X):=[Q,X]_{\mathfrak k},\\
ad_Q|_{\mathfrak m}:\mathfrak m\to \mathfrak m, X\rightarrow ad_Q|_{\mathfrak m}(X):=[Q,X]_{\mathfrak m},
\end{eqnarray*}
and the operator $ad_{\omega}^{-1}: {\mathfrak m}\rightarrow {\mathfrak m}$ is determined by
\begin{align*}
[\omega,ad_{\omega}^{-1}X]=X,~~\forall X\in \mathfrak m.
\end{align*}
By setting ${\widetilde X}^{'(1)}=ad_{\omega}Q$ and ${\widetilde X}^{'(L)}=\widetilde{\mathcal{R}}_{red}{\widetilde X}^{'(L-1)}$ for $L\ge 2$ by induction, the evolution equation $Q_t={\widetilde X}^{'(L)} (=[\omega,V^{'(L)}])$ is called the $L$-th Fordy-Kulish's NLS equation. This creates the Fordy-Kulish's NLS hierarchy ($L\ge1$) in the Hermitian reductive Lie algebra $\mathfrak g={\mathfrak u(n)}$. One notes that for given $L\ge2$, the integrability condition of the Lax pair (\ref{05}) in $\mathfrak g$ produces the $(L+1)$-th Forty-Kulish's NLS equation $Q_t={\widetilde X}^{'(L+1)}$.

We would point out that the operator $ad_{\omega}^{-1}$ has already been used in the literature (for example, see \cite{Gerdjikov2008}). This operator is particularly important and useful for us to express the dynamical models of vortex filaments in ${\mathfrak g}$, as we will see below.
For example, in the simplest Hermitian reductive non-symmetric Lie algebra $\mathfrak u(3)={\mathfrak k}\oplus{\mathfrak m}$, where
$$
{\mathfrak k}=\left\{\begin{pmatrix}
\sqrt{-1}\lambda_1  &0 & 0\\
0& \sqrt{-1}\lambda_2  & 0\\
0&0 & \sqrt{-1}\lambda_3
\end{pmatrix}~\bigg|~\lambda_1,\lambda_2,\lambda_3\in \mathbb R\right\}$$
and
$$
{\mathfrak m}=\left\{\begin{pmatrix}
0  &\varphi_1 & \varphi_3\\
-{\overline\varphi}_1& 0 & \varphi_2\\
-{\overline\varphi}_3&-{\overline\varphi}_2 & 0
\end{pmatrix}~\bigg|~\varphi_1,\varphi_2,\varphi_3\in \mathbb C\right\},
$$
we take
\begin{eqnarray*}
\omega = \frac{\sqrt{-1}}{2}
\begin{pmatrix}
\mu_1  & & \\
& \mu_2  & \\
& &  \mu_3
\end{pmatrix}\in \mathfrak k,\end{eqnarray*} in which $\mu_1,\mu_2$ and $\mu_3$ are three distinct real numbers.
The corresponding reductive homogeneous space is the full flag manifold $F_3^{\mu}(1,1,1)=U(3)/U(1)\times U(1)\times U(1)\cong \{E^{-1}\omega E~|~E\in U(3)\}$.
In this case, the operators $ad_{\omega}$ and $ad_{\omega}^{-1}$ are explicitly given by
\begin{eqnarray*}
ad_{\omega}X&=&[\omega, X]=\frac{\sqrt{-1}}{2}\left(\begin{array}{ccc}
0 & (\mu_1-\mu_2)\varphi_{1} & (\mu_1-\mu_3)\varphi_{3} \\
(\mu_1-\mu_2)\bar{\varphi}_{1} & 0 & (\mu_2-\mu_3)\varphi_{2} \\
(\mu_1-\mu_3)\bar{\varphi}_{3} & (\mu_2-\mu_3)\bar{\varphi}_{2} & 0\end{array}\right),\\
ad_{\omega}^{-1}X&=&-2\sqrt{-1}\left(\begin{array}{ccc}
0 & \frac{1}{(\mu_1-\mu_2)}\varphi_{1} & \frac{1}{(\mu_1-\mu_3)}\varphi_{3} \\
\frac{1}{(\mu_1-\mu_2)}\bar{\varphi}_{1} & 0 & \frac{1}{(\mu_2-\mu_3)}\varphi_{2} \\
\frac{1}{(\mu_1-\mu_3)}\bar{\varphi}_{3} & \frac{1}{(\mu_2-\mu_3)}\bar{\varphi}_{2} & 0\end{array}\right),
\end{eqnarray*}
where $X=\left(\begin{array}{ccc}
0 & \varphi_{1} & \varphi_{3} \\
-\bar{\varphi}_{1} & 0 & \varphi_{2} \\
-\bar{\varphi}_{3} & -\bar{\varphi}_{2} & 0\end{array}\right)\in \mathfrak m$.
When $L=2$, the $3$-rd Fordy-Kulish's NLS equation (i.e., $Q_t={\widetilde X}^{'(3)}$) in $\mathfrak u(3)$ is:
\begin{eqnarray}\label{3-wave}
\left\{\begin{array}{l}
\sqrt{-1} {\varphi_1}_{t}=2\frac{{\varphi_1}_{xx}}{\mu_1-\mu_2}+2\varphi_1\left(\frac{2\left|\varphi_1\right|^2}{\mu_1-\mu_2}-\frac{\left|\varphi_2\right|^2}{\mu_2-\mu_3}+\frac{\left|\varphi_3\right|^2}{\mu_1-\mu_3}\right)
-2\frac{{\varphi_3}_x \bar{\varphi}_2}{\mu_1-\mu_3}-2\frac{\varphi_3 \bar{\varphi}_{2x}}{\mu_2-\mu_3}, \\
\sqrt{-1}{\varphi_2}_{t}=2\frac{{\varphi_2}_{xx}}{\mu_2-\mu_3}+2\varphi_2\left(-\frac{\left|\varphi_1\right|^2}{\mu_1-\mu_2}+\frac{2\left|\varphi_2\right|^2}{\mu_2-\mu_3}+\frac{\left|\varphi_3\right|^2}{\mu_1-\mu_3}\right)
+2\frac{\varphi_3 \bar{\varphi}_{1x}}{\mu_1-\mu_2}+2\frac{{\varphi_3}_x\bar{\varphi}_1}{\mu_1-\mu_3},\\
\sqrt{-1} {\varphi_3}_{t}=2\frac{{\varphi_3}_{xx}}{\mu_1-\mu_3}+2\varphi_3\left(\frac{\left|\varphi_1\right|^2}{\mu_1-\mu_2}+\frac{\left|\varphi_2\right|^2}{\mu_2-\mu_3}+\frac{2\left|\varphi_3\right|^2}{\mu_1-\mu_3}\right)
-2\frac{{\varphi_2}_x \varphi_1}{\mu_2-\mu_3}+2\frac{\varphi_2 {\varphi_1}_x}{\mu_1-\mu_2},
\end{array}\right.
\end{eqnarray}
which is exactly Eq.(3.35), called the 3-wave equation, in \cite{FK} when we replace $\varphi_2$ (resp. $\varphi_3$) by $\varphi_3$ (resp. $\varphi_2$) and change the factor $\frac{1}{2}$ in $\omega$ by $1$. We would point out that the minus symbol in front of the term $\frac{q_{1x}q_3}{a_1-a_2}$ in the second equation of Eq.(3.35) in \cite{FK} should be plus.

With the Lax pair (\ref{05}) in the Hermitian reductive Lie algebra $\mathfrak g=\mathfrak u(n)$ indicated above, as done in the symmetric case, we introduce $W(x,t,\lambda)=\phi_\lambda\phi^{-1}$ and regard $\gamma=\{W\}=\phi^{-1}\phi_\lambda$ as a family of curves evolving in $\mathfrak g$. It is obvious that ${\gamma}_x=\{W\}_x=\{(\lambda\omega+Q)_\lambda\}=\{\omega\}$. By use of the bi-invariant Killing metric $\langle A,B\rangle =-tr(AB)$,  $\forall A,B \in \mathfrak g$, we see that $|{\gamma}_x|^2=$ constant. After properly rescaling the metric, ${\gamma}$ is a family of arclength-parameterized curves in $\mathfrak g$. Henceforth, we also refer to a $t$-family of arclength-parameterized curves in $\mathfrak g$ as a Sym-Pohlmeyer curve, especially the curve $\gamma=\phi^{-1}\phi_\lambda$ at $\lambda=0$.
The unit tangent vector of the Sym-Pohlmeyer curve $\gamma=\phi^{-1}\phi_\lambda$ is $T={\gamma}_x=\{\omega\}$.
We also introduce the operator $\mathcal{K}:\mathfrak m\rightarrow \mathfrak k: X_{\mathfrak m}\mapsto \mathcal{K}(X)=(\partial_x^{-1}[Q,X_{\mathfrak m}])_{\mathfrak k}$, $\forall X\in \mathfrak g$, as done in the case of the symmetric Lie algebra. One sees that when $\mathfrak g$ is Hermitian symmetric, the sub-index $\mathfrak k$ in $(\partial_x^{-1}[Q,X_{\mathfrak m}])_{\mathfrak k}$ disappears automatically since $[Q,X_{\mathfrak m}]\in \mathfrak k$ by the symmetric conditions. A Sym-Pohlmeyer field on $\mathfrak g$ is also defined to be of the from $Y=\{\mathcal{K}(B_{\mathfrak m})+B_{\mathfrak m}\}$ for some $\mathfrak m$-field $B_{\mathfrak m}$ (up to integration constant in $\mathcal{K}$). Let $Y=\{B\}=\phi^{-1}B\phi, B\in \mathfrak g$ be a vector field along ${\gamma}$, we define

(i) Generalized renormalization operator:

$$
\mathcal{P}(\{B\})=\left\{\mathcal{K}\left(B_{\mathfrak m}\right)+B_{\mathfrak m}\right\}=\left\{(\partial_x^{-1}\left[Q, B_{\mathfrak m}\right])_{\mathfrak {k}}+B_{\mathfrak m}\right\}.
$$

(ii) Generalized geometric recursion operator (it is now denoted by $\mathcal{R}_{red}$):

$$
\mathcal{R}_{red} (Y)=\mathcal{P}\left(ad_{\{\omega\}}^{-2}([T, \partial_x Y])\right) ,
$$
where $ad_{\{\omega\}}^{-1}$ is defined by
$$ ad_{\{\omega\}}^{-1}:\{B_{\mathfrak m}\}\mapsto\{ad_\omega^{-1}B_{\mathfrak m}\}.$$
We point out that the operator $ad_{\{\omega\}}^{-1}$ seems to be first introduced and used here. One notes that $\mathcal{R}_{red}$ returns $\mathcal{R}_{sym}$ when $\mathfrak g$ goes back to $\mathfrak h$.

For a Sym-Pohlmeyer curve ${\gamma}=\phi^{-1}\phi_\lambda$ with potential function $Q$ at $\lambda=0$ in a Hermitian reductive Lie algebra $\mathfrak g$, we similarly define a sequence of vector fields $X^{'(L)}$ ($L\ge 1$) along $\gamma$ by the generalized geometric recursion operator $\mathcal{R}_{red}$: $X^{'(L)}=\mathcal{R}_{red}X^{'(L-1)}$ from $X^{'(0)}=\{\omega\}$, $L\ge1$. The first three of them are:
\begin{eqnarray}
X^{'(0)}&=&\{\omega\},\nonumber\\
X^{'(1)}&=&\mathcal{R}_{red}X^{'(0)}=\{Q\},\label{L11}\\
X^{'(2)}&=&\mathcal{R}_{red}X^{'(1)}=\{ad_{\omega}^{-1}Q_x+\frac{1}{2}[Q,ad_{\omega}^{-1}Q]_{\mathfrak k}\},\label{L12}\\
&\cdots&\nonumber
\end{eqnarray}
One may verifies that the vector fields $X^{'(L)}$ ($L\ge 1$) can also be represented by $X^{'(L)}=\{V^{'(L)}\}$ from the recursion process, where $V^{'(L)}$ is the lowest coefficient in $\lambda$ of $V=V'_{L}$ in the Lax pair (\ref{05}) for $\mathfrak g=u(n)$.
The Langer-Perline's LIE hierarchy is now defined to be moving curves ${\gamma}$ in $\mathfrak g$ by the model ${\gamma}_t=X^{'(L)}$ ($L=0,1,2,\cdots$). We still call the model ${\gamma}_t=X^{'(L)}$ in a Hermitian reductive Lie algebra $\mathfrak g$ the $L$-th Langer-Perline's LIE equation. The explicit expressions of the first three of them are:
\begin{eqnarray}
{\gamma}_t&=&\gamma_x,\nonumber\\
{\gamma}_t
&=&-\sum\limits_{1\leq i<j \leq s}\frac{4}{(\mu_i-\mu_j)^2}([{\gamma}_x,{\gamma}_{xx}]_{ij}+[{\gamma}_x,{\gamma}_{xx}]_{ji}),\label{firstmodel} \\
{\gamma}_t
&=&-\sum\limits_{1\leq i<j \leq s}\frac{4}{(\mu_i-\mu_j)^2}\big(({\gamma}_{xxx})_{ij}+({\gamma}_{xxx})_{ji}\big)\nonumber\\
&&-\sum\limits_{1\leq i<j \leq s}\sum\limits_{1\leq m<n \leq s}\frac{16}{(\mu_i-\mu_j)^2(\mu_m-\mu_n)^2}\Big(\big[{\gamma}_{xx},[{\gamma}_x,{\gamma}_{xx}]_{ij}+[{\gamma}_x,{\gamma}_{xx}]_{ji}\big]_{mn}\nonumber\\
&&+\big[{\gamma}_{xx},[{\gamma}_x,{\gamma}_{xx}]_{ij}+[{\gamma}_x,{\gamma}_{xx}]_{ji}\big]_{nm}\Big)\nonumber\\
&&+\sum\limits_{1\leq i<j \leq s}\frac{32\sqrt{-1}}{(\mu_i-\mu_j)^5}\big[[{\gamma}_x,{\gamma}_{xx}]_{ij},[{\gamma}_x,{\gamma}_{xx}]_{ji}\big],
\label{secondmodel}
\end{eqnarray}
where $\{X\}_{ij}=(\phi^{-1}X\phi)_{ij}:=\phi^{-1}X_{ij}\phi$, in which $X_{ij}$ denotes the matrix with the entry in the $i$-row and $j$-column being that of $X$ and zero otherwise. In fact, Eqs.(\ref{firstmodel}) and (\ref{secondmodel}) are derived respectively from Eqs.(\ref{L11}) and (\ref{L12}) by applying the following formulae:
\begin{eqnarray*}
&&{\gamma}_x=\{\omega\},\quad {\gamma}_{xx}=\{\omega,Q\},\quad {\gamma}_{xxx}=\{\omega,Q_x\}+\{[\omega,Q],Q\},\\ \nonumber
&&[{\gamma}_x,{\gamma}_{xx}]=[\{\omega\},\{\omega,Q\}]=\{\omega,[\omega,Q]\}=\{ad_{\omega}^2 Q\}, \nonumber
\end{eqnarray*}
and hence
\begin{eqnarray*}
\{Q\}&=&ad_{\{\omega\}}^{-2}\{ad_{\omega}^2 Q\}=ad_{\{\omega\}}^{-2}[{\gamma}_x,{\gamma}_{xx}],\nonumber\\
&=&-\sum\limits_{1\leq i<j \leq s}\frac{4}{(\mu_i-\mu_j)^2}([{\gamma}_x,{\gamma}_{xx}]_{ij}+[{\gamma}_x,{\gamma}_{xx}]_{ji}),\nonumber\\
\{ad_{\omega}^{-1}Q_x\}&=&ad_{\{\omega\}}^{-2}\bigg({\gamma}_{xxx}+\sum\limits_{1\leq i<j \leq s}\frac{4}{(\mu_i-\mu_j)^2}[{\gamma}_{xx},[{\gamma}_x,{\gamma}_{xx}]_{ij}+[{\gamma}_x,{\gamma}_{xx}]_{ji}]\bigg)\nonumber\\
&=&-\sum\limits_{1\leq i<j \leq s}\frac{4}{(\mu_i-\mu_j)^2}\big(({\gamma}_{xxx})_{ij}+({\gamma}_{xxx})_{ji}\big)\nonumber\\
&&-\sum\limits_{1\leq i<j \leq s}\sum\limits_{1\leq m<n \leq s}\bigg(\frac{16}{(\mu_i-\mu_j)^2(\mu_m-\mu_n)^2}[{\gamma}_{xx},[{\gamma}_x,{\gamma}_{xx}]_{ij}+[{\gamma}_x,{\gamma}_{xx}]_{ji}]_{mn} \nonumber\\
&&+[{\gamma}_{xx},[{\gamma}_x,{\gamma}_{xx}]_{ij}+[{\gamma}_x,{\gamma}_{xx}]_{ji}]_{nm}\bigg), \nonumber\\
\frac{1}{2}\{[Q,ad_{\omega}^{-1}Q]_{\mathfrak
 k}\}
 &=&\sum\limits_{1\leq p<q\leq s}\frac{32\sqrt{-1}}{(\mu_p-\mu_q)^5}[[{\gamma}_x,{\gamma}_{xx}]_{pq},[{\gamma}_x,{\gamma}_{xx}]_{qp}].\\ \nonumber
 \end{eqnarray*}
Eqs.(\ref{firstmodel}) and (\ref{secondmodel}) are respectively the leading-order and the second-order vortex models in the Hermitian reductive Lie algebra $\mathfrak g=\mathfrak u(n)$, as we will see below in \S5.

What remains us to do is to pursue the third-order vortex model in $\mathfrak g=\mathfrak u(n)$. Before doing it, we shall mention that the first three vortex models in the Hermitian symmetric Lie algebra $\mathfrak h=\mathfrak u(n)$ are respectively equivalent to
\begin{eqnarray}
{\widetilde\gamma}_t&=&{-}[{\widetilde\gamma},\,{\widetilde\gamma}_{xx}],\,\label{GLIE1}\\
{\widetilde\gamma}_t&=&{-\bigg(}{\widetilde\gamma}_{xxx}+\frac{3}{2}\Big([{\widetilde\gamma}_{xx},\,
[{\widetilde\gamma},\,{\widetilde\gamma}_{x}]]+[{\widetilde \gamma}_x,[{\widetilde \gamma},{\widetilde\gamma}_{xx}]]\Big){\bigg)},\label{GKDV1}\\
{\widetilde\gamma}_t&=&[{\widetilde\gamma},\,\alpha{\widetilde\gamma}_{xxxx}+(4\beta-2\alpha)({\widetilde\gamma}_{x}{\widetilde\gamma}^{-1}{\widetilde\gamma}_{x}
{\widetilde\gamma}^{-1}{\widetilde\gamma}_{x})_x]\label{GFM11}
\end{eqnarray}
by taking the derivative with respect to $x$ on the both sides of models (\ref{GLIE}), (\ref{GKDV}) and (\ref{GFM1}), where ${\widetilde \gamma}=\gamma_x\in Gr(k,n-k)$. Hence, instead of finding the counterpart of the third-order vortex model (\ref{GFM1}) in the Hermitian reductive Lie algebra $\mathfrak g=\mathfrak u(n)$, we would like to find its equivalent counterpart of the model (\ref{GFM11}) in $\mathfrak g$. However, one notes that because of the non-integrability, the previous method from integrable theory cannot be applied. The geometric method used in \cite{DW2} seems to lose its effectiveness too, because of the difficulty that the canonical connection used by Fordy and Kulish in \cite{FK} is not torsion free.

In order to find an effective way in obtaining the third-order vortex model in $\mathfrak g$, let's recall and refine the method applied in \cite{DW2} in proving the gauge equivalence between Eq.(\ref{GFM11}) and a fourth-order nonlinear Schr\"odinger-like equation. Rewriting (\ref{GFM11}) as
\begin{equation}\label{Bi}
{\widetilde\gamma}_t=[{\widetilde\gamma},\alpha{\widetilde\gamma}_{xxxx}+4(4\beta-2\alpha)({\widetilde\gamma}_x^3)_x],
\end{equation}
where $\widetilde\gamma=\gamma_x\in G(k,n-k)=\{E^{-1}\sigma_3E~|~E\in U(n)\}$, we see that Eq.(\ref{Bi}) is actually the equation of 1-d generalized bi-Schr\"odinger flow on  $G(k,n-k)$ (see \cite{DW2}). By using the geometric concept of PDEs with prescribed curvature representations in the category of Yang-Mills theory, we shall present Eq.(\ref{Bi}) as a PDE with prescribed curvature representation. In fact, by introducing the following connection 1-form
\begin{eqnarray}
A&=&\lambda {\widetilde\gamma}
dx+\bigg\{\lambda^4\alpha{\widetilde\gamma}-\lambda^3\alpha[{\widetilde\gamma},{\widetilde\gamma}_x]-\lambda^2\alpha
\left({\widetilde\gamma}_{xx}-6{\widetilde\gamma}^2_x{\widetilde\gamma}\right)\nn\\
&&~+\lambda\left([{\widetilde\gamma},\alpha{\widetilde\gamma}_{xxx}+
4(4\beta-2\alpha){\widetilde\gamma}^3_x]-\alpha
[{\widetilde\gamma}_x,{\widetilde\gamma}_{xx}]\right)\bigg\}dt\label{bi9}
\end{eqnarray}
and the prescribed curvature 2-form
\begin{eqnarray}
K=\lambda^2\bigg(-4(4\beta+\frac{1}{2}\alpha){\widetilde\gamma}_x^3\bigg)dx\wedge dt
\label{bi13}
\end{eqnarray}
on the bundle $\mathbb R^2\times U(n)$, where $\lambda$ is a free parameter,
we see that Eq.(\ref{Bi}) is equivalent to the formula (refer to \cite{DW2})
$$
F_A=dA-A\wedge A=K.
$$
It should be mentioned that the curvature $F_A$ of $A$ used here is expressed by the formula $F_A=dA-A\wedge A$, in contrast to that used in \cite{DW2} by $F_A=dA+A\wedge A$.
It is well-known that without loss of generality, a solution to Eq.(\ref{Bi}) is presented by ${\widetilde\gamma}=E^{-1}\sigma_3E$ with $E\in U(n)$ satisfying
$E_x=PE$, $P\in {\mathfrak m}$ (see, for example, \cite{DW2}). Throughout the paper, we shall replace $Q$ by $P$ to denote the potential function in the case of Hermitian symmetric Lie algebras and keep $Q$ to denote the potential function in the case of Hermitian reductive Lie algebras. We now take the gauge matrix to be $G=E^{-1}$ and then make the gauge transformation:
\begin{eqnarray}
A\longmapsto {\widetilde A}=-G^{-1}dG+G^{-1}AG. \label{gauge1}
\end{eqnarray}
From the Yang-Mills theory, we know that the curvature $F_A$
is of homogeneous under the gauge transformation, i.e.,
\begin{eqnarray}
F_{\widetilde A}=G^{-1}F_AG=G^{-1}KG:={\wt K}.\label{gauge2}
\end{eqnarray}
By a direct calculation, we have
\begin{eqnarray*}
{\widetilde A}&=&-Ed(E^{-1})+EAE^{-1}\\
&=&(E_x dx+E_t dt)E^{-1}+E\bigg(\lambda {\widetilde\gamma}
dx+\big\{\lambda^4\alpha{\widetilde\gamma}-\lambda^3\alpha[{\widetilde\gamma},{\widetilde\gamma}_x]-\lambda^2
\left(\alpha({\widetilde\gamma}_{xx}-6{\widetilde\gamma}^2_x{\widetilde\gamma})\right)\\
&&~+\lambda\left([{\widetilde\gamma},\alpha{\widetilde\gamma}_{xxx}+
4(4\beta-2\alpha){\widetilde\gamma}^3_x]-\alpha
[{\widetilde\gamma}_x,{\widetilde\gamma}_{xx}]\right)\big\}dt\bigg)E^{-1}\\
&=&\left(\lambda
\sigma_3+P\right)dx+\bigg\{\lambda^4\alpha\sigma_3+\lambda^3\alpha
P+\lambda^2\left(2\alpha(P_x+P^2)\sigma_3\right)\nn\\
&&+\lambda \left(\alpha(-P_{xx}+2P^3-[P,P_x])-4(4\beta+\frac{1}{2}\alpha)P^3\right)+E_tE^{-1}\bigg\}dt,\label{A-conne}\\
\end{eqnarray*}
where $E_tE^{-1}=(E_tE^{-1})_{\mathfrak m}+(E_tE^{-1})_{\mathfrak k}$ with
\begin{eqnarray}
(E_tE^{-1})_{\mathfrak m}=-2\alpha
(P_{xxx}-[P,[P,P_x]])\sigma_3-32\beta
(P^3)_x\sigma_3\nonumber\label{gauge6}
\end{eqnarray}
and \begin{eqnarray}
(E_tE^{-1})_{\mathfrak k} &=&-2\alpha\left((P^2)_{xx}-3(P^2_x)\right)\sigma_3 +6\alpha
P^4\sigma_3\nn\\&&
-(8\beta+\alpha)\left(4P^4+4\int^x(PPP_sP+PP_sPP)ds\right)\sigma_3.\nonumber\label{gauge8}
\end{eqnarray}
Although the explicit expression of $\widetilde A$ is obtained, for the purpose of latter generalization, we should rewrite $\widetilde A$ by
\begin{eqnarray}\label{tA-con}
\widetilde{A}=(\lambda\sigma_3+P)dx+\hat{V}_4dt,
\end{eqnarray}
where
 \begin{eqnarray}
  \hat{V}_4 &=& \lambda^4\alpha\sigma_3+\lambda^3\alpha V^{(1)}+\lambda^2\alpha V^{(2)}+\lambda\big(\alpha V^{(3)}-(4\beta+\frac{1}{2}\alpha)[\sigma_3,ad_P^3\sigma_3]\big)\nn\\
  &&+\big(\alpha V^{(4)}+(4\beta+\frac{1}{2}\alpha)M\big),\label{V4hat}
  \end{eqnarray}
in which $V^{(i)}=V_{\mathfrak m}^{(i)}+V_{\mathfrak k}^{(i)}$ ($i=1,2,3,4$) are determined by the following iterated relations from $V^{(0)}=\sigma_3$:
\begin{flalign*}
[\sigma_3,V^{(i)}]=\widetilde{\mathcal{R}}_{sym}[\sigma_3,V^{(i-1)}]
\end{flalign*}
(that is equivalent to having $  V_{\mathfrak m}^{(i)}=-[\sigma_3,(V_{\mathfrak m}^{(i-1)})_x-[P,{V_\mathfrak k}^{(i-1)}]]$ and
$V_{\mathfrak k}^{(i)}=\partial_x^{-1}[P,V_{\mathfrak m}^{(i)}]$ )
and
\begin{flalign*}
  M=M_{\mathfrak m}+M_{\mathfrak k}~~\hbox{with}~~
  M_{\mathfrak m}=[\sigma_3,(4P^3)_x]=-[P,[P,ad_{\sigma_3}^{-1}P]_x], ~~
  M_{\mathfrak k}=\partial_x^{-1}[P,M_{\mathfrak m}].
\end{flalign*}
Moreover,
\begin{eqnarray}
{\widetilde
K}&=&G^{-1}KG=
-\lambda^2(4\beta+\frac{1}{2}\alpha)[P,[P,ad_{\sigma_3}^{-1}P]]dx\wedge dt.
\label{gauge5}
\end{eqnarray}
Thus the prescribed curvature representation formula $F_{\widetilde A}=d{\widetilde A}-{\widetilde A}\wedge {\widetilde A}={\widetilde K}$ gives that $P$ fulfills the following
fourth-order Schr\"odinger-like equation
\begin{eqnarray}
2P_t\sigma_3&=&\alpha\bigg\{P_{xxxx}-[P,[P,P_x]]_x-P\left((P^2)_{xx}-3P_x^3\right)\nn\\
&&-\left((P^2)_{xx}-3P_x^3\right)P-2(P^3)_{xx}+6P^5\bigg\}-2(8\beta+\alpha)
\bigg\{-(P^3)_{xxx}+2P^5\nn\\
&&+P\left(\int^x_0(PPP_sP+PP_sPP)ds\right)
+\left(\int^x_0(PPP_sP+PP_sPP)ds\right)P\bigg\}.\label{gauge}
\end{eqnarray}
Here, to avoid confusion, the parameter ${\gamma}$ in Eq.(49) in \cite{DW2} is changed by $\beta$ and the parameter $\beta$ in Eq.(49) in \cite{DW2} is replaced by $\alpha$. The details are referred to \S4 in \cite{DW2}.

Our approach in finding the third-order vortex model in $\mathfrak g$ is now proposed as follows. We first generalize the connection 1-form ${\widetilde A}$ and the curvature 2-form $\widetilde{K}$ associated to Eq.(\ref{gauge})
from the Hermitian symmetric Lie algebra $\mathfrak{h}=u(n)$
to the Hermitian reductive Lie algebra $\mathfrak{g}=u(n)$.
To avoid confusion again, the generalized connection 1-form and curvature 2-form are denoted by $\widetilde{A}'$ and $\widetilde{K}'$, respectively.  $\widetilde{A}'$ and $\widetilde{K}'$ should fulfill the requirement that when the Hermitian reductive Lie algebra $\mathfrak{g}$ equi-collapses (refer to \S5 for the definition) to the Hermitian symmetric Lie algebra $\mathfrak{h}$, they revert respectively to $\widetilde A$ given by (\ref{tA-con}) and $\widetilde{K}$ given by (\ref{gauge5}) in $\mathfrak{h}$. Next, by using the prescribed curvature representation, we make the inverse gauge transformation to the connection ${\widetilde A}^{\prime}$ to obtain the corresponding connection 1-form $A^{\prime}$ and the curvature 2-form $K^{\prime}$. The equation coming from the prescribed curvature representation associated to $A^{\prime}$ and $K^{\prime}$ is just the model we pursue to find.

By comparing the recursion operator ${\widetilde {\mathcal{R}}}_{sym}$ with the generalized recursion operator ${\widetilde {\mathcal{R}}}_{red}$, we write down respectively the desired prescribed curvature 2-form $\widetilde{K}^{\prime}$ and the connection 1-form $\widetilde{A}^{\prime}$ on $\mathbb R^2\times U(n)$ as follows.
\begin{eqnarray}\label{Newtwo}
{\widetilde K}'&=&-\lambda^2(4\beta+\frac{1}{2}\alpha)[Q,[Q,ad_\omega^{-1}Q]_{\mathfrak k}]~dx\wedge dt
\end{eqnarray}
and
\begin{eqnarray}\label{Newone}
{\widetilde A}'&=&(\lambda\omega+Q)dx+\hat{V}_4'dt,
\end{eqnarray}
where, analogous to that of (\ref{V4hat}),
\begin{eqnarray}
\hat{V}_4'&=&\lambda^4\alpha\omega+\lambda^3 \alpha {V'}^{(1)}+\lambda^2\alpha {V'}^{(2)}+\lambda
  \Big(\alpha {{V'}}^{(3)}+(4\beta+\frac{1}{2}\alpha)
  (ad_\omega^{-1}[Q,[Q,ad_\omega^{-1}Q]_{\mathfrak k}]\nonumber\\
  &&+\partial_x^{-1}[Q,ad_\omega^{-1}[Q,[Q,ad_\omega^{-1}Q]_{\mathfrak k}]]_{\mathfrak k})\Big)
  +\Big(\alpha V^{'(4)}+(4\beta+\frac{1}{2}\alpha)M^{'}\Big),\label{V4}
  \end{eqnarray}
in which ${{V'}}^{(i)}={{V'}}_{\mathfrak m}^{(i)}+{{V'}}_{\mathfrak k}^{(i)} ~(i=1,2,3,4)$ are determined by the following iterated relations from ${V'}^{(0)}=\omega$,
\begin{flalign*}
[\omega,{V'}^{(i)}]=\widetilde{\mathcal{R}}_{red}[\omega,{V'}^{(i-1)}]
\end{flalign*}
that is to say,
\begin{flalign}
{{V'}}_{\mathfrak m}^{(i)} &= \text{ad}_\omega^{-1}\left(({{{V'}}_{\mathfrak m}^{(i-1)}})_x - [Q,{{V'}}_{\mathfrak k}^{(i-1)}] - [Q,{{V'}}_{\mathfrak m}^{(i-1)}]_{\mathfrak m}\right), \label{V'}\\
{{V'}}_{\mathfrak k}^{(i)} &= \partial_x^{-1}[Q,{{V'}}_{\mathfrak m}^{(i)}]_{\mathfrak k}, \label{V't}
\end{flalign}
and $M^{'}=M^{'}_{\mathfrak m}+M^{'}_{\mathfrak k}$ with
\begin{align}
M'_{\mathfrak m} &= \text{ad}_\omega^{-2}\left[Q,\left[Q,\text{ad}_\omega^{-1}Q\right]_{\mathfrak k}\right]_x - \text{ad}_\omega^{-1}\left[Q,\text{ad}_\omega^{-1}\left[Q,\left[Q,\text{ad}_\omega^{-1}Q\right]_{\mathfrak k}\right]\right]_{\mathfrak m} \nonumber\\
&\quad - \text{ad}_\omega^{-1}\left[Q,\partial_x^{-1}\left[Q,\text{ad}_\omega^{-1}\left[Q,\left[Q,\text{ad}_\omega^{-1}Q\right]_{\mathfrak k}\right]\right]_{\mathfrak k}\right],\label{M'} \\
M'_{\mathfrak k} &= \partial_x^{-1}\left[Q,M'_{\mathfrak m}\right]_{\mathfrak k}. \label{M't}
\end{align}
One will see in \S5 that the above connection $\widetilde{A}^{\prime}$ and curvature $\widetilde{K}^{\prime}$ given in (\ref{Newone}) and (\ref{Newtwo}) meet our requirement.
Moreover, it is a direct verification that the prescribed curvature representation
\begin{eqnarray}
F_{\widetilde A'}=d{\widetilde A'}-{\widetilde A'}\wedge{\widetilde A'}={\widetilde K}' \label{Newthree}
\end{eqnarray} gives that $Q$ satisfy
\begin{equation}
   Q_t=\Bigg(\left(\alpha {V'}^{(4)}+(4\beta+\frac{1}{2}\alpha)M'\right)_x-[Q,\alpha {V'}^{(4)}+(4\beta+\frac{1}{2}\alpha)M']\Bigg)_{\mathfrak m}. \label{Q4s}
\end{equation}
Eq.(\ref{Q4s}) is the counterpart of Eq.(\ref{gauge}) in the Hermitian reductive Lie algebra $\mathfrak g$. The explicit expression of ${V'}^{(4)}$ in of Eq.(\ref{Q4s}) will be given in the Appendix \ref{A.1} at the end of the paper. $M^{\prime}$ in Eq.(\ref{Q4s}) is given by (\ref{M'}) and (\ref{M't}).

One notes that when $\lambda=0$, (\ref{Newthree}) becomes a zero curvature representation, that is, there exists a unitary matrix $E\in U(n)$ fulfilling
\begin{eqnarray}
\left\{\begin{array}{c}E_x=QE,~~~~~~~~~~~~~~~~~~~~~~~~~~~~~~\\
E_t=\left(\alpha {V'}^{(4)}+(4\beta+\frac{1}{2}\alpha)M'\right)E.\end{array}\right.\label{Et}
\end{eqnarray}
We shall make a gauge transformation to the connection ${\widetilde A'}$ by taking $G=E$
\begin{eqnarray}
{\widetilde A'}\longmapsto{A'}=-G^{-1}dG+G^{-1}{\widetilde A'}G. \nn\label{gauge3}
\end{eqnarray}
Hence
\begin{eqnarray*}
 A' &=& \lambda{\widetilde\gamma} dx+(E^{-1}V^{'}_4E-E^{-1}E_t)dt\\
  &=&\lambda{\widetilde\gamma} dx+\Big(\lambda^4\alpha{\widetilde\gamma}+\lambda^3\alpha E^{-1}QE+\lambda^2\alpha E^{-1}V^{'(2)}E+\lambda\alpha E^{-1}V^{'(3)}E\\
  &&+(4\beta+\frac{1}{2}\alpha)E^{-1}(ad_\omega^{-1}[Q,[Q,ad_\omega^{-1}Q]_{\mathfrak k}]+\partial_x^{-1}[Q,ad_\omega^{-1}[Q,[Q,ad_\omega^{-1}Q]_{\mathfrak k}]]_{\mathfrak k})E\\
  && +\alpha E^{-1}V^{'(4)}E+(4\beta+\frac{1}{2}\alpha)E^{-1}(M_{\mathfrak m}'+M_{\mathfrak k}')E-E^{-1}E_t\Big)dt,
  \end{eqnarray*}
where $\widetilde \gamma=E^{-1}\omega E\in F_s^{\mu}(k_1,\cdots,k_s)\hookrightarrow u(n)$ and
\begin{eqnarray*}
 K' &=& E^{-1}{\widetilde K'}E=-\left(\lambda^2(4\beta+\frac{1}{2}\alpha)E^{-1}[Q,[Q,ad_\omega^{-1}Q]_{\mathfrak k}]E\right)~dx\wedge dt.
  \end{eqnarray*}
On the other hand, by a direct calculation and using the second equation in (\ref{Et}), we have
  \begin{eqnarray}
  F_{A'} &=&-\left( \lambda {\widetilde\gamma}_t-(E^{-1}\hat{V_{4}'}E-E^{-1}E_t)_x+[\lambda{\widetilde\gamma},E^{-1}\hat{V_{4}'}E-E^{-1}E_t]\right)dx\wedge dt\nonumber\\
  &=&-\bigg(\lambda^2(4\beta+\frac{1}{2}\alpha)\big(E^{-1}[Q,[Q,ad_\omega^{-1}Q]_{\mathfrak k}]E \big)\nonumber\\
  &&+\lambda\big(\widetilde \gamma_t-\alpha[\widetilde\gamma,E^{-1}V_{\mathfrak m}^{'(4)}E]-(4\beta+\frac{1}{2}\alpha)[\widetilde \gamma,E^{-1}M'_{\mathfrak m}E]\big)\bigg)dx\wedge dt.\label{FA'}
  \end{eqnarray}
Substituting (\ref{FA'}) into the formula $F_{\widetilde A'} = d\widetilde A'-\widetilde A'\wedge \widetilde A'=\widetilde K'$, we obtain the desired equation
\begin{equation}
\widetilde\gamma_t = \Bigl[\widetilde\gamma,\alpha(E^{-1}V_{\mathfrak m}^{'(4)}E)+\big(4\beta + \frac{1}{2}\alpha\big)(E^{-1}M_{\mathfrak{m}}'E)\Bigr],\label{GFM12}
\end{equation}
where $M'_{\mathfrak m}$ is given by (\ref{M'}). The following is an expression of Eq.(\ref{GFM12}) that separates the fourth-order derivative terms from the lower order derivative terms.
\begin{eqnarray*}
    {\widetilde\gamma}_t&=&\alpha\bigg[{\widetilde\gamma},\sum_{1\leq i<j \leq s}\frac{16}{(\mu_i-\mu_j)^4}\big(({\widetilde\gamma}_{xxxx})_{ij}+
    ({\widetilde\gamma}_{xxxx})_{ji}\big)\bigg]
    \nonumber\\
    &&+\bigg[{\widetilde\gamma},\alpha f({\widetilde\gamma},{\widetilde\gamma}_{x},{\widetilde\gamma}_{xx},{\widetilde\gamma}_{xxx})
    +\big(4\beta+\frac{1}{2}\alpha\big)g({\widetilde\gamma},{\widetilde\gamma}_{x},{\widetilde\gamma}_{xx},{\widetilde\gamma}_{xxx})\bigg],
\end{eqnarray*}
where $f({\widetilde\gamma},{\widetilde\gamma}_{x},{\widetilde\gamma}_{xx},{\widetilde\gamma}_{xxx})$ and $g({\widetilde\gamma},{\widetilde\gamma}_{x},{\widetilde\gamma}_{xx},{\widetilde\gamma}_{xxx})$
are given in Appendix \ref{A.2} at the end of the paper.

\smallskip
It is easy to verify that the inverse of the above process is also true. Hence, as a by-production in finding the model (\ref{GFM12}) in the Hermitian reductive Lie algebra ${\mathfrak g}=u(n)$, we have
\begin{Lemma}
Eqs.(\ref{Q4s}) and Eq.(\ref{GFM12}) in the Hermitian reductive Lie algebra $\mathfrak g=u(n)$ are gauge equivalent to each other.
\end{Lemma}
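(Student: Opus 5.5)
The plan is to prove the two directions separately, with the gauge transformation $G=E$ and the prescribed curvature representation (\ref{Newthree}) as the bridge. The forward direction, from (\ref{Q4s}) to (\ref{GFM12}), is essentially the computation already displayed before the statement; I will only make its logic precise. The converse is the genuinely new part.

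\textbf{Forward direction.} Let $Q$ solve (\ref{Q4s}).

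1. I first check that (\ref{Newthree}) holds identically in $\lambda$.
   - The $dx\wedge dt$ coefficient of $d\widetilde A'-\widetilde A'\wedge\widetilde A'$ is $-\bigl(\lambda\omega_t+Q_t-(\hat V_4')_x+[\lambda\omega+Q,\hat V_4']\bigr)$, which I expand in powers of $\lambda$.
   - The $\lambda^5$ coefficient vanishes because $[\omega,\omega]=0$.
   - The $\lambda^4,\lambda^3,\lambda^2$ coefficients vanish in their $\mathfrak m$- and $\mathfrak k$-parts by the recursion (\ref{V'})--(\ref{V't}). The extra $(4\beta+\frac12\alpha)$ terms at order $\lambda$ are arranged so that the $\lambda^2$ coefficient leaves exactly $\widetilde K'$. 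Concretely, $[\omega,ad_\omega^{-1}X]=X$ with $X=[Q,[Q,ad_\omega^{-1}Q]_{\mathfrak k}]$ gives the curvature term, and the $\mathfrak k$-part $\partial_x^{-1}[Q,\cdot]_{\mathfrak k}$ kills the $\mathfrak k$-component.
   - At order $\lambda^1$, the $\mathfrak m$-part matches $[\omega,V'^{(4)}_{\mathfrak m}]$ and $[\omega,M'_{\mathfrak m}]$ by the definitions (\ref{M'}); this is where the $ad_\omega^{-2}$ in $M'_{\mathfrak m}$ is needed. The $\mathfrak k$-part vanishes by (\ref{V't}) and (\ref{M't}).
   - At order $\lambda^0$, the $\mathfrak m$-part is exactly (\ref{Q4s}), and the $\mathfrak k$-part vanishes by the choice of $\mathfrak k$-components.

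2. Setting $\lambda=0$ gives a flat $\mathfrak u(n)$-connection on $\mathbb R^2$. Hence (\ref{Et}) is solvable for $E\in U(n)$, uniquely once an initial value is fixed.

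3. Set $\widetilde\gamma=E^{-1}\omega E$ and gauge by $G=E$. Gauge covariance of curvature gives $F_{A'}=E^{-1}F_{\widetilde A'}E=E^{-1}\widetilde K'E=K'$.

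4. Comparing with the explicit formula (\ref{FA'}), the $\lambda$-coefficient yields (\ref{GFM12}).
   - To see that only $V'^{(4)}_{\mathfrak m}$ and $M'_{\mathfrak m}$ survive, I use $[\widetilde\gamma,E^{-1}YE]=E^{-1}[\omega,Y]E$ and $[\omega,\mathfrak k]=0$.

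\textbf{Converse.} Let $\widetilde\gamma$ solve (\ref{GFM12}).

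1. Write $\widetilde\gamma=E^{-1}\omega E$ with $E_x=QE$ and $Q\in\mathfrak m$. This is possible because the flag manifold is the adjoint orbit and the isotropy algebra is $\mathfrak k$. A $K_\omega$-valued gauge $E\mapsto hE$ can kill the $\mathfrak k$-component of $E_xE^{-1}$; this is the parallel (Coulomb-type) gauge, obtained by solving a linear ODE in $x$.

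2. Define $Q$ from this frame and construct the connection $A'$ as above. Equation (\ref{GFM12}) together with (\ref{FA'}) gives $F_{A'}=K'$.

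3. Gauge back by $G=E^{-1}$ to obtain $F_{\widetilde A'}=\widetilde K'$. Taking the $\lambda^0$ $\mathfrak m$-component then returns (\ref{Q4s}).

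\textbf{Main obstacle.} The delicate point is making the converse consistent in $t$. I must show that $E^{-1}E_t$ (or $E_tE^{-1}$) agrees with $\alpha V'^{(4)}+(4\beta+\frac12\alpha)M'$ up to the freedom in the integration constants of $\partial_x^{-1}$.

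- The $\mathfrak m$-part of $E_tE^{-1}$ is forced by $\widetilde\gamma_t=[\widetilde\gamma,E^{-1}E_t]$ and the invertibility of $ad_\omega$ on $\mathfrak m$.
- The $\mathfrak k$-part is only determined after imposing the gauge condition. I would fix it by demanding $(E_tE^{-1})_{\mathfrak k}=\alpha V'^{(4)}_{\mathfrak k}+(4\beta+\frac12\alpha)M'_{\mathfrak k}$. This is possible by applying a further $t$-dependent $K_\omega$-gauge, which preserves $\widetilde\gamma$, and it uses the compatibility of the $x$- and $t$-equations, i.e., the $\mathfrak k$-part of the $\lambda^0$ curvature identity.

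This bookkeeping in the non-symmetric setting, where $[\mathfrak m,\mathfrak m]\not\subset\mathfrak k$ so that the $[Q,\cdot]_{\mathfrak m}$ terms appear, is the step I expect to require the most care.
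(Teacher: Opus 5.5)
Your proposal is correct and follows the paper's route. The forward direction is exactly the computation displayed before the lemma: the prescribed curvature representation (\ref{Newthree}), flatness at $\lambda=0$ giving (\ref{Et}), the gauge $G=E$, and reading off the $\lambda^1$ coefficient of (\ref{FA'})\,---\,where the paper itself only asserts that the converse is ``easy to verify.'' Your treatment of the converse uses the same device the paper uses in the converse half of Theorem \ref{cor}, and so fills in what the paper leaves implicit: a parallel lift with $E_xE^{-1}=Q\in\mathfrak m$; $(E_tE^{-1})_{\mathfrak m}$ forced by the invertibility of $ad_\omega$ on $\mathfrak m$; $(E_tE^{-1})_{\mathfrak k}$ fixed up to an $x$-independent $F(t)$ by the $\mathfrak k$-part of flatness; and $F(t)$ removed by an $x$-independent $K_\omega$-valued gauge $G(t)$, which is tacitly justified by the $Ad_{K_\omega}$-equivariance of $V'^{(4)}$ and $M'$.
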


We now end this section with listing the three models that we pursue to find.
\begin{Theorem} \label{three-eq}
After taking the derivative of moving curves $\gamma$ with respect to $x$ ( $\widetilde \gamma=\gamma_x$), we have the following three basic models in the Hermitian reductive Lie algebra $\mathfrak{g}=u(n)$:
\begin{eqnarray}
\widetilde\gamma_t&=&-\sum\limits_{1\leq i<j \leq s}\frac{4}{(\mu_i-\mu_j)^2}\left[{\widetilde\gamma},({\widetilde\gamma}_{x})_{ij}+({\widetilde\gamma}_{x})_{ji}\right]_x,\label{GGLIE}\\
\widetilde{\gamma}_t &=& -\sum_{\substack{1\leq i<j \leq s}} \frac{4}{(\mu_i-\mu_j)^2}\left( (\widetilde{\gamma}_{xx})_{ij}
+ (\widetilde{\gamma}_{xx})_{ji} \right)_x + \frac{32\sqrt{-1}}{(\mu_i-\mu_j)^5}\left[[\widetilde{\gamma},
\widetilde{\gamma}_x]_{ij},[\widetilde{\gamma},\widetilde{\gamma}_x]_{ji} \right]_x\nonumber \\
&& - \sum_{\substack{1\leq i<j \leq s}} \sum_{\substack{1\leq m<n \leq s}} \frac{16}{(\mu_i-\mu_j)^2(\mu_m-\mu_n)^2}
\Bigg( \left[ \widetilde{\gamma}_x,[\widetilde{\gamma},\widetilde{\gamma}_x]_{ij}
+ [\widetilde{\gamma},\widetilde{\gamma}_x]_{ji}\right]_{mn}\nonumber\\
&& + \left[ \widetilde{\gamma}_x, [\widetilde{\gamma},\widetilde{\gamma}_x]_{ij}
+ [\widetilde{\gamma},\widetilde{\gamma}_x]_{ji} \right]_{nm} \Bigg)_x,\label{GGKDV} \\
{\widetilde\gamma}_t&=&\alpha\bigg[{\widetilde\gamma},\sum_{1\leq i<j \leq s}\frac{16}{(\mu_i-\mu_j)^4}\big(({\widetilde\gamma}_{xxxx})_{ij}+
    {\widetilde\gamma}_{xxxx})_{ji}\big)\bigg]
    \nonumber\\
    &&+\bigg[{\widetilde\gamma},\alpha f({\widetilde\gamma},{\widetilde\gamma}_{x},{\widetilde\gamma}_{xx},{\widetilde\gamma}_{xxx})
    +\big(4\beta+\frac{1}{2}\alpha\big)g({\widetilde\gamma},{\widetilde\gamma}_{x},{\widetilde\gamma}_{xx},{\widetilde\gamma}_{xxx})\bigg],\label{Ga4}
\end{eqnarray}
where $\widetilde \gamma\in {\rm F}_s^{\rm \mu}(k_1,\cdots,k_s)\hookrightarrow u(n)$.
\end{Theorem}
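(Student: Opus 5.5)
The plan is to read Theorem \ref{three-eq} as a repackaging of results already obtained in this section. Each model is obtained by differentiating the corresponding Langer–Perline equation with respect to $x$, or, for the third model, by quoting (\ref{GFM12}) in the separated form just displayed. We then rewrite everything in terms of $\widetilde\gamma=\gamma_x=\{\omega\}$ using the dictionary
\[
\widetilde\gamma=\{\omega\},\quad \widetilde\gamma_x=\{\omega,Q\},\quad \widetilde\gamma_{xx}=\{\omega,Q_x\}+\{[\omega,Q],Q\},\quad [\widetilde\gamma,\widetilde\gamma_x]=\{ad_\omega^2Q\}.
\]
These are the formulas listed after (\ref{secondmodel}), with $\gamma_x,\gamma_{xx},\gamma_{xxx}$ replaced by $\widetilde\gamma,\widetilde\gamma_x,\widetilde\gamma_{xx}$.

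For (\ref{GGLIE}), I start from (\ref{firstmodel}), i.e. $\gamma_t=X^{'(1)}=\{Q\}$, and apply $\partial_x$. Since $\gamma_{tx}=\gamma_{xt}=\widetilde\gamma_t$, the left side becomes $\widetilde\gamma_t$. On the right, $[\gamma_x,\gamma_{xx}]=[\widetilde\gamma,\widetilde\gamma_x]$. The key observation is that $\{\omega\}$ is block diagonal in the frame $\phi$, so bracketing with it preserves the $(ij)$ block decomposition. Hence $[\widetilde\gamma,\widetilde\gamma_x]_{ij}=[\widetilde\gamma,(\widetilde\gamma_x)_{ij}]$, because $ad_{\{\omega\}}$ acts on the $(ij)$ block by the scalar $\frac{\sqrt{-1}}{2}(\mu_i-\mu_j)$. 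Summing over blocks and differentiating gives (\ref{GGLIE}). A one-line check is enough: $\{Q\}_x=\{Q_x\}+\{[Q,Q]\}$, and this agrees with $\partial_x$ of the right side because $\partial_x\{B\}=\{B_x+[B,Q]\}$ and the projection is written in the moving frame.

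For (\ref{GGKDV}), I differentiate (\ref{secondmodel}) in $x$ and replace $\gamma_x,\gamma_{xx},\gamma_{xxx}$ by $\widetilde\gamma,\widetilde\gamma_x,\widetilde\gamma_{xx}$ term by term. This gives exactly the three groups of terms in (\ref{GGKDV}), each appearing as an $x$-derivative.

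For (\ref{Ga4}), I invoke (\ref{GFM12}), which was derived from the prescribed curvature representation (\ref{Newthree}) by the gauge transformation with $G=E$ from (\ref{Et}); the preceding Lemma guarantees that this is reversible. I then expand $E^{-1}V_{\mathfrak m}^{'(4)}E$ and $E^{-1}M'_{\mathfrak m}E$ in $\widetilde\gamma$ and its derivatives. The top-order part of $V^{'(4)}_{\mathfrak m}$ is $ad_\omega^{-3}Q_{xxx}$ (from iterating (\ref{V'})). In the frame, $\widetilde\gamma_{xxxx}=\{\omega,Q_{xxx}\}+\text{lower order}$, so this part equals $ad_{\{\omega\}}^{-4}\widetilde\gamma_{xxxx}$ up to lower-order terms. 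Blockwise, $ad_{\{\omega\}}^{-4}$ is multiplication by $16/(\mu_i-\mu_j)^4$, which produces the leading term. All remaining contributions are expressed through $Q,Q_x,Q_{xx}$, and these are inverted via the dictionary, e.g. $\{Q\}=ad_{\{\omega\}}^{-2}[\widetilde\gamma,\widetilde\gamma_x]$. That defines $f$ and $g$ as in Appendix \ref{A.2}.

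The main obstacle is this last bookkeeping step. Every nonlocal term $\partial_x^{-1}[\cdot,\cdot]_{\mathfrak k}$ in $V^{'(4)}$ and $M'$ has to be checked to drop out, or to be expressible locally, after bracketing with $\widetilde\gamma$. The point is that $\mathfrak k$-parts commute with $\omega$, so only $\mathfrak m$-parts survive in $[\widetilde\gamma,\cdot\,]$; this is exactly why (\ref{GFM12}) involves only the $\mathfrak m$-components. The $\mathfrak m$-components, however, still contain $[Q,V'_{\mathfrak k}]$ terms through (\ref{V'}). I would handle these by identifying $V'_{\mathfrak k}$ at low order with local expressions, e.g. $V_{\mathfrak k}^{'(2)}=\frac12[Q,ad_\omega^{-1}Q]_{\mathfrak k}$ as in (\ref{L12}), computed order by order. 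I would first verify the $s=2$ case against (\ref{GFM11}) as a consistency check.
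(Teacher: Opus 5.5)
Your proposal is correct and follows the paper's own route. Models (\ref{GGLIE}) and (\ref{GGKDV}) are the $x$-derivatives of (\ref{firstmodel}) and (\ref{secondmodel}), rewritten in $\widetilde\gamma=\gamma_x$ using the fact that $ad_{\{\omega\}}$ acts blockwise by scalars. Model (\ref{Ga4}) is (\ref{GFM12}) with the leading part $E^{-1}ad_\omega^{-3}Q_{xxx}E=ad_{\{\omega\}}^{-4}\widetilde\gamma_{xxxx}+\cdots$ split off and with $f,g$ defined as in Appendix~\ref{A.2}.

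The locality check you flag as the main obstacle is not part of the paper's argument, and the statement as posed does not need it: the paper's $f$ keeps $V^{'(3)}_{\mathfrak k}$ written with $\partial_x^{-1}$. Incidentally, the $\partial_x^{-1}$ term inside $M'_{\mathfrak m}$ (hence in $g$) vanishes identically. For $D\in\mathfrak k$ one has $[Q,ad_\omega^{-1}[Q,D]]_{\mathfrak k}=-\frac{1}{2}[D,[Q,ad_\omega^{-1}Q]_{\mathfrak k}]$, which is zero when $D=[Q,ad_\omega^{-1}Q]_{\mathfrak k}$.
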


We point out that when $4\beta+\frac{1}{2}\alpha=0,$ Eq. (\ref{Ga4}) (i.e. Eq.(\ref{GFM12})) returns to the fourth equation in the Langer-Perline's LIE hierarchy. We will show in Section 5 that
Eqs.(\ref{GGLIE}),(\ref{GGKDV}) and (\ref{Ga4}) constitute the basic vortex models in the Hermitian reductive Lie algebra ${\mathfrak g}=u(n)$, up to the third-order approximation.

\section*{\S4. Equivalence}

In the previous section, we have introduced respectively the recursion operators $\widetilde{\mathcal{R}}_{red}$ and $\mathcal{R}_{red}$ in the Hermitian reductive Lie algebra $\mathfrak g=u(n)$. By using $\widetilde{\mathcal{R}}_{red}$, we have deduced the Fordy-Kulish's NLS hierarchy, i.e., $Q_t=\widetilde X^{'(L)}$ ($L\ge0$), and meanwhile, by using $\mathcal{R}_{red}$, we have derived the Langer-Perline's LIE hierarchy, i.e., ${\gamma}_t=X^{'(L)}$ ($L\ge0$). In this section we shall show that the $(L+2)$-th Fordy-Kulish's NLS equation $Q_t=\widetilde X^{'(L+2)}$ is equivalent to the $L$-th Langer-Perline's LIE equation ${\gamma}_t=X^{'(L)}$ in $\mathfrak g$. This generalizes greatly the famous Da Rios-NLS correspondence. It should be mentioned that in the Hermitian symmetric Lie algebra $\mathfrak h$, Langer and Perline in \cite{LP} just established a correspondence between the $(L+2)$-th Fordy-Kulish's NLS equation and the $L$-th Langer-Perline's LIE equation.

In the Hermitian reductive Lie algebra $\mathfrak g=\mathfrak u(n)$, the second to fourth equations in the Fordy-Kulish's NLS hierarchy read
\begin{eqnarray}
Q_t&=&\widetilde{X}^{(2)}=Q_x,\nonumber\\
Q_t&=&\widetilde X^{(3)}=ad_\omega^{-1}Q_{xx}-\frac{1}{2}[Q,[Q,ad_\omega^{-1}Q]_{\mathfrak k}]-[Q,ad_\omega^{-1}Q_x]_{\mathfrak m},\label{Q2}\\
Q_t&=&\widetilde X^{(4)}=ad_\omega^{-2}Q_{xxx}-(\frac{1}{2}ad_\omega^{-1}[Q,[Q,ad_\omega^{-1}Q]_{\mathfrak k}]+ad_\omega^{-1}[Q,ad_\omega^{-1}Q_x]_{\mathfrak m})_x\nonumber\\
&&-[Q,\partial_x^{-1}([Q,ad_\omega^{-2}Q_{xx}-\frac{1}{2}ad_\omega^{-1}[Q,[Q,ad_\omega^{-1}Q]_{\mathfrak k}]-ad_\omega^{-1}[Q,ad_\omega^{-1}Q_x]_{\mathfrak m}]_{\mathfrak k})]\nonumber\\
&&-[Q,ad_\omega^{-2}Q_{xx}-\frac{1}{2}ad_\omega^{-1}[Q,[Q,ad_\omega^{-1}Q]_{\mathfrak k}]-ad_\omega^{-1}[Q,ad_\omega^{-1}Q_x]_{\mathfrak m}]_{\mathfrak m}.\label{Q3}\\
\nonumber
\end{eqnarray}
Meanwhile, the first three equations in the Langer-Perline's LIE hierarchy are
\begin{eqnarray*}
{\gamma}_t&=&X^{'(0)}=\{\omega\}=T=\gamma_x,\\
{\gamma}_t&=&X^{'(1)}=\{Q\}=ad_{\{\omega\}}
^{-2}[{\gamma}_x,{\gamma}_{xx}]=-\sum\limits_{1\leq i<j \leq s}\frac{4}{(\mu_i-\mu_j)^2}([{\gamma}_x,{\gamma}_{xx}]_{ij}+[{\gamma}_x,{\gamma}_{xx}]_{ji}), \\
{\gamma}_t&=&X^{'(2)}=\{ad_\omega^{-1} Q_x+\frac{1}{2}[Q,ad_\omega^{-1}Q]_{\mathfrak k}\}\nonumber\\
&=&-\sum\limits_{1\leq i<j \leq s}\frac{4}{(\mu_i-\mu_j)^2}\big(({\gamma}_{xxx})_{ij}+({\gamma}_{xxx})_{ji}\big)+\frac{32\sqrt{-1}}{(\mu_i-\mu_j)^5}[[{\gamma}_x,{\gamma}_{xx}]_{ij},[{\gamma}_x,{\gamma}_{xx}]_{ji}]\nonumber\\
&&-\sum\limits_{1\leq i<j \leq s}\sum\limits_{1\leq m<n \leq s}\frac{16}{(\mu_i-\mu_j)^2(\mu_m-\mu_n)^2}\big([{\gamma}_{xx},[{\gamma}_x,{\gamma}_{xx}]_{ij}+[{\gamma}_x,{\gamma}_{xx}]_{ji}]_{mn}\nonumber\\
&&+[{\gamma}_{xx},[{\gamma}_x,{\gamma}_{xx}]_{ij}+[{\gamma}_x,{\gamma}_{xx}]_{ji}]_{nm}\big).\\
\end{eqnarray*}

\begin{Theorem} \label{cor}
For any $L\ge1$, the $(L+2)$-th Fordy-Kulish's NLS equation $Q_t=\widetilde X^{'(L+2)}$ in the Hermitian reductive Lie algebra $\mathfrak g=\mathfrak u(n)$ is equivalent to the $L$-th Langer-Perline's LIE equation ${\gamma}_t=X^{'(L)}$ in $\mathfrak g$.
\end{Theorem}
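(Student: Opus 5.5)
The proof follows the Langer–Perline scheme. We show that the frame $\phi$ at $\lambda=0$ (equivalently $E$ with $E_x=QE$) intertwines the two flows. Throughout, $\phi$ is a solution of $\phi_x=(\lambda\omega+Q)\phi$ with $\gamma=\phi^{-1}\phi_\lambda$. We use the Sym–Pohlmeyer identities already recorded in \S3: $\gamma_x=\{\omega\}$ and $\partial_x\{B\}=\{B_x+[\lambda\omega+Q,B]\}$ for any $B$. The key fact to isolate first is the relation between the two recursion operators. For an $\mathfrak m$-field $B$,
$$
\mathcal R_{red}\{\mathcal K(B)+B\}=\{\mathcal K(C)+C\},\qquad [\omega,C]=\widetilde{\mathcal R}_{red}[\omega,B].
$$
Equivalently, $\mathcal R_{red}$ is conjugate to $\widetilde{\mathcal R}_{red}$ under the map $\{\mathcal K(B)+B\}\mapsto[\omega,B]$. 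To prove this, compute $\partial_x\{\mathcal K(B)+B\}$ using $\mathcal K(B)_x=[Q,B]_{\mathfrak k}$. Then bracket with $T=\{\omega\}$, which kills the $\mathfrak k$-part. Applying $ad^{-2}_{\{\omega\}}$ and then $\mathcal P$ gives exactly $C=ad_\omega^{-1}\big(B_x-[Q,\mathcal K(B)]-[Q,B]_{\mathfrak m}\big)$, which matches (\ref{V'}) and (\ref{V't}). By induction from $X^{'(0)}=\{\omega\}$ and $\widetilde X^{'(1)}=ad_\omega Q$, this yields $X^{'(L)}=\{V^{'(L)}\}$ together with $[\omega,V^{'(L)}_{\mathfrak m}]=\widetilde X^{'(L)}$. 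This confirms the claim made after (\ref{L12}).

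Next we prove the forward direction. Suppose $Q$ solves $Q_t=\widetilde X^{'(L+2)}$. The $(L+2)$-th equation is the integrability condition of the Lax pair with $V=V'_{L+1}$, so $\phi$ exists with $\phi_t=V'_{L+1}\phi$. Differentiating, we get $\gamma_t=\{(V'_{L+1})_\lambda\}+\big[\gamma,\{V'_{L+1}\}\big]$. Evaluate this at $\lambda=0$, where $\gamma$ is the Sym–Pohlmeyer curve. Using the compatibility $(V'_{L+1})_x-[U,V'_{L+1}]=U_t$ and the $\lambda$-derivative of the flatness condition, one checks $\partial_x\big(\phi^{-1}(V'_{L+1})_\lambda\phi\big)=\partial_x\{V^{'(L)}\}$ modulo constants. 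Because $V'_{L+1}=\sum_j V^{'(j)}\lambda^{L+1-j}$, the $\lambda$-derivative at $\lambda=0$ picks out exactly the coefficient $V^{'(L)}$. The commutator term vanishes after the standard normalization $\phi(\cdot,\cdot,0)$ chosen so that $\gamma$ is defined up to constants. The cleanest route is to differentiate $\phi_t=V\phi$ in $\lambda$ and write $\gamma_t=\phi^{-1}(V_\lambda)\phi$ directly. Hence $\gamma_t=\{V^{'(L)}\}=X^{'(L)}$.

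For the converse, start from an arclength Sym–Pohlmeyer curve with $\gamma_x=\{\omega\}=E^{-1}\omega E$, $E_x=QE$, evolving by $\gamma_t=X^{'(L)}$. Differentiate in $x$ to get $(E^{-1}\omega E)_t=\partial_x\{V^{'(L)}\}=\{[\omega, \ldots]\}$. From this, read off $E_tE^{-1}=V^{'(L+1)}$ up to a $\mathfrak k$-valued gauge term, which we fix using $\mathcal K$ and the integration constants. Compatibility of $E_x=QE$ and $E_t=V^{'(L+1)}E$ then gives $Q_t=(V^{'(L+1)})_x-[Q,V^{'(L+1)}]$. Its $\mathfrak m$-part equals $\widetilde X^{'(L+2)}$ by (\ref{eq:2}) and (\ref{FKL}). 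Its $\mathfrak k$-part vanishes by (\ref{eq:3}).

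I expect the main obstacle to be this converse step in the genuinely reductive setting. There $[\mathfrak m,\mathfrak m]\not\subset\mathfrak k$, so the $\mathfrak m$-part $[Q,\cdot]_{\mathfrak m}$ enters. Also, inverting $ad_{\{\omega\}}$ requires the $\mathfrak k$-component of $E_tE^{-1}$ to be pinned down exactly: that is, the freedom $E\mapsto kE$ with $k\in K_\omega$ must be fixed consistently with the choice of antiderivatives in $\mathcal K$. I would handle this by choosing the gauge in which $(E_tE^{-1})_{\mathfrak k}=\partial_x^{-1}[Q,(E_tE^{-1})_{\mathfrak m}]_{\mathfrak k}$, which is precisely (\ref{eq:3}).
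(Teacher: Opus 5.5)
Your argument is correct in substance, and its converse half matches the paper. From $\widetilde\gamma_t=\{V^{'(L)}\}_x$ you get $(E_tE^{-1})_{\mathfrak m}=V^{'(L+1)}_{\mathfrak m}$. The compatibility $Q_t=(E_tE^{-1})_x-[Q,E_tE^{-1}]$ fixes the $\mathfrak k$-part up to an $x$-independent $F(t)$, and an $x$-independent gauge $E\mapsto k(t)E$ removes it; this is the paper's $G(t)$. The forward half and the groundwork take a genuinely different route.

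\begin{itemize}
\item You prove that $\{\mathcal K(B)+B\}\mapsto[\omega,B]$ intertwines $\mathcal R_{red}$ and $\widetilde{\mathcal R}_{red}$, which establishes $X^{'(L)}=\{V^{'(L)}\}$. The paper only asserts this in \S3 (``one may verify'') and takes it as the first line of its proof.
\item You then use the exact Sym formula $\gamma_t=\phi^{-1}(\partial_\lambda V'_{L+1})\phi$ at $\lambda=0$, where $\partial_\lambda V'_{L+1}|_{\lambda=0}=V^{'(L)}$. This gives $\gamma_t=X^{'(L)}$ for the curve itself.
\item The paper instead gauges $\widetilde A'$ by the $\lambda=0$ frame and reads $\widetilde\gamma_t=(E^{-1}V^{'(L)}E)_x$ off the $\lambda^1$-coefficient of $F_{A'}=0$. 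It then re-integrates $\gamma=\int^x\widetilde\gamma$. Strictly, this recovers $\gamma_t=\{V^{'(L)}\}$ only up to an $x$-independent term $c(t)$, which must be absorbed by a time-dependent translation.
\end{itemize}

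Your route is more direct for the integrable hierarchy and avoids that constant. The paper's gauge-theoretic route is the one that carries over to the third-order model of \S3. There the curvature is prescribed and nonzero, so no $\phi$ solving the Lax pair for all $\lambda$ exists and only the $\lambda=0$ frame is available.

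Three slips need repair.

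\begin{itemize}
\item \emph{Sign of $\partial_x\{B\}$.} With $\phi_x=(\lambda\omega+Q)\phi$ and $\{B\}=\phi^{-1}B\phi$ one has $\partial_x\{B\}=\{B_x-[\lambda\omega+Q,B]\}$; check this against $\gamma_{xx}=\{[\omega,Q]\}$. With this sign the two $[Q,B]_{\mathfrak k}$ terms cancel, so $\partial_x\{\mathcal K(B)+B\}=\{B_x-[Q,\mathcal K(B)]-[Q,B]_{\mathfrak m}\}$ is already $\mathfrak m$-valued. Your sign flips the $Q$-terms and would not reproduce $C=ad_\omega^{-1}(B_x-[Q,\mathcal K(B)]-[Q,B]_{\mathfrak m})$.
\item \emph{No commutator in the Sym formula.} $\gamma_t=\{V_\lambda\}$ holds exactly. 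There is no term $[\gamma,\{V\}]$ and no normalization is needed, so drop that detour.
\item \emph{The $\mathfrak k$-part in the converse.} The $\mathfrak k$-part of $E_tE^{-1}$ cannot simply be chosen by gauge. The only freedom preserving $E_xE^{-1}\in\mathfrak m$ is $E\mapsto k(t)E$ with $k$ independent of $x$. What forces $(E_tE^{-1})_{\mathfrak k}=V^{'(L+1)}_{\mathfrak k}+F(t)$ is the $\mathfrak k$-component of $Q_t=(E_tE^{-1})_x-[Q,E_tE^{-1}]$, together with $Q_t\in\mathfrak m$. Only after that does $k_t=-kF$ remove $F$. This also requires the antiderivatives in the $V^{'(j)}$ to be fixed (e.g.\ by a base point) so that replacing $Q$ by $kQk^{-1}$ replaces $V^{'(j)}$ by $kV^{'(j)}k^{-1}$.
\end{itemize}
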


\begin{proof}
First of all, we recall from \S3 that the $L$-th Langer-Perline's LIE equation $\gamma_t=X^{'(L)}$ in the Hermitian reductive Lie algebra $\mathfrak g=\mathfrak u(n)$ can also be expressed by
    \begin{eqnarray*}
        \gamma_t=\{V^{'(L)}\},
    \end{eqnarray*}
where $\gamma\in \{E^{-1}\omega E~|~E\in U(n)\}\hookrightarrow u(n)$.
Taking the derivative with respect to $x$ in the both sides of the equation, we have
    $(\gamma_t)_x=\{V^{'(L)}\}_x$.
If we set $\gamma_x=\widetilde\gamma$, then the above equation becomes
\begin{eqnarray}
\widetilde \gamma_t=\{V^{'(L)}\}_x,\label{tilde}
\end{eqnarray}
where in the right-hand side of  Eq.(\ref{tilde}), $\gamma_x$ is replaced by $\widetilde \gamma$, $\gamma_{xx}$ by ${\widetilde \gamma}_x$ and so on.
Conversely, if $\widetilde \gamma$ satisfies Eq.(\ref{tilde}), we then set $\gamma=\int^x {\widetilde\gamma}(s,t)ds$. One sees that $\gamma$ satisfies
$\gamma_t=\{V^{'(L)}\}$. This indicates that $L$-th Langer-Perline's LIE equation $\gamma_t=X^{'(L)}$ is equivalent to Eq.(\ref{tilde}).

Next, we shall prove that the $(L+2)$-th Fordy-Kulish's NLS equation $Q_t=\widetilde X^{'(L+2)}$ in the Hermitian reductive Lie algebra $\mathfrak g=\mathfrak u(n)$ and  Eq.(\ref{tilde}) are gauge equivalent to each other. For this purpose, we set $\widetilde A'$ to be a connection 1-form on the trivial bundle $\mathbb{R}^2\times U(n)$ as follows
    \begin{eqnarray}
        \widetilde A'&=&(\lambda\omega+Q)~dx+V~dt,\label{0000}
    \end{eqnarray}
    where $V=\sum\limits_{j=0}^{L+1}V^{'(j)}\lambda^{L+1-j}$ is a polynomial ansatz in the spectral parameter $\lambda$, in which $V^{'(j)}$ ($j=0,1,\cdots,L+1$) are determined recursively by the relations (\ref{eq:1}) (\ref{eq:2}) and (\ref{eq:3}). $Q_t=\widetilde X^{'(L+2)}$ is equivalent to the zero-curvature representation:
    \begin{eqnarray*}
        F_{\widetilde A'}=d\widetilde A'-\widetilde A'\wedge \widetilde A'=0.
    \end{eqnarray*}
    Now, we choose a fundamental solution  $E=E(x,t)\in U(n)$ to
    $$
    \left\{
    \begin{array}{l}
    E_x = QE, \\[4pt]
    E_t = V^{'(L+1)}E
    \end{array}
    \right.
    $$
    at $\lambda=0$ and make a gauge transformation to the connection $\widetilde A'$ by  $G=E$:
    \begin{eqnarray}
        \widetilde A'\mapsto A'&=&-G^{-1}dG+G^{-1}\widetilde A'G=-E^{-1}dE+E^{-1}\widetilde A'E \nonumber\\
        &=&\lambda{\widetilde\gamma} dx+\bigg(\sum\limits_{j=0}^{L+1}E^{-1}V^{'(j)}(x,t)E\lambda^{L+1-j}-E^{-1}E_t\bigg)dt\label{WAA}
    \end{eqnarray}
    where $\widetilde\gamma=E^{-1}\omega E$.

    From the Yang-Mills theory, we know that $F_A'=E^{-1}F_{\widetilde A'}E=0$. A direct computation shows that
    \begin{eqnarray*}
        0&=&F_A'=dA'-A'\wedge A'\\
        &=&-\bigg(\lambda^{L+2}[\widetilde\gamma,E^{-1}V^{'(0)}E]+\sum\limits_{j=0}^{L-1}\lambda^{L+1-j}E^{-1}([\omega,V^{'(j+1)}]-V_x^{'(j)}\\
        &&+[Q,V^{'(j)}])E+\lambda\big(\widetilde\gamma_t-(E^{-1}V^{'(L)}E)_x+[\widetilde\gamma,E^{-1}V^{'(L+1)}E-E^{-1}E_t]\big)\\
        &&-(E^{-1}V^{'(L+1)}E-E^{-1}E_t)_x\bigg)dx\wedge dt\\
        &=&-\lambda\big(\widetilde\gamma_t-(E^{-1}V^{'(L)}E)_x\big)dx\wedge dt.
    \end{eqnarray*}
    Therefore, the $(L+2)$-th Fordy-Kulish's NLS equation $Q_t=\widetilde X^{'(L+2)}$ is gauge transformed to
    \begin{equation}
        \widetilde\gamma_t=(E^{-1}V^{'(L)}E)_x=\{V^{'(L)}\}_x,\nonumber
    \end{equation}
which is exactly Eq.(\ref{tilde}).

Conversely, if $\widetilde\gamma=E^{-1}\omega E$ solves Eq.(\ref{tilde}), i.e. $\widetilde\gamma_t=(E^{-1}V^{'(L)}E)_x$, where $E=E(x,t)\in U(n)$. Without loss of generality, as indicated in Section 3, $E$ is assumed to satisfy $E_x=QE$ for some $Q(x,t)\in \mathfrak m$. We then make the following gauge transformation to the connection $A'$ by using $G=E^{-1}$:
    \begin{eqnarray}
        A'\mapsto\widetilde A'&=&-G^{-1}dG+G^{-1} A'G=dE\cdot E^{-1}+EA'E^{-1}\nonumber\\
        &=&(\lambda\omega+Q)dx+(V'_{L+1}-V^{'(L+1)}+E_t E^{-1})dt,\label{WA}
    \end{eqnarray}
    where $E_tE^{-1}$ is independent of $\lambda$. By a direct calculation, we have
    \begin{eqnarray}
        0&=&F_{\widetilde A'}-\widetilde A'\wedge \widetilde A' \nonumber\\
        &=&-\big(Q_t-(V'_{L+1}-V^{'(L)}+E_tE^{-1})_x+[\lambda\omega+Q,V'_{L+1}-V^{'(L)}\nonumber\\
        &&+E_tE^{-1}]\big)dx\wedge dt\nonumber\\
        &=&-\bigg(\lambda(V_x^{'(L)}+[Q,V^{'(L)}]+[\omega,E_tE^{-1}])+(Q_t+[Q,E_tE^{-1}]\nonumber\\
        &&-(E_tE^{-1})_x)\bigg)dx\wedge dt. \label{F_A}
    \end{eqnarray}
    The vanishing of the coefficients of $\lambda^1$ and $\lambda^0$ in (\ref{F_A}) implies that
    $$
    \left\{
    \begin{array}{l}
    [\omega,E_tE^{-1}]= -(V^{'(L)}_x+[Q,V^{'(L)}])=-[\omega,V_{\mathfrak m}^{'(L+1)}], \\[4pt]
    (E_tE^{-1})_x = Q_t+[Q,E_tE^{-1}]
    \end{array}
    \right.
    $$
    From the first equation in the above differential system, we see that
    $$(E_tE^{-1})_{\mathfrak m}=-V_{\mathfrak m}^{'(L+1)}.$$
    Comparing the $\mathfrak k$-components in both sides of the second equation, we obtain
    $$
    {(E_tE^{-1})_{\mathfrak k}}_x=[Q,(E_tE^{-1})_{\mathfrak m}]_{\mathfrak k}
    $$
    which implies $(E_tE^{-1})_{\mathfrak k}=(\partial_x^{-1}[Q,-V^{'(L+1)}])_{\mathfrak k}=-V^{'(L+1)}_{\mathfrak k}+F(t)$ for some $F(t)\in\mathfrak k$ that is independent of $x$. Hence we have
    \begin{equation}
        E_tE^{-1}=(E_tE^{-1})_{\mathfrak m}+(E_tE^{-1})_{\mathfrak k}=-V^{'(L+1)}+F(t).\label{E_tE-1}
    \end{equation}
    In order to cancel $F(t)$ in (\ref{E_tE-1}), we modify $E$ by
    $$E\to \hat E=G(t)E.$$
    where $G(t)\in \mathfrak k$ depends only on $t$ and satisfies $\frac{dG}{dt}(t)+G(t)F(t)=0$. Obviously, such a $G$ exists. One may verify straightforwardly that $\hat E_t=G_t(t)E+G(t)E_t=-G(t)F(t)E+G(t)(-V^{'(L+1)}+F(t))E=-G(t)V^{'(L+1)}G^{-1}(t)\hat E$, and hence the second term on the right-hand-side of (\ref{E_tE-1}) vanishes identically if we replace $E$ by $\hat E$ in the connection $ A'$ given by (\ref{WAA}). The $\widetilde A'$ obtained in (\ref{WA}) is exactly the connection 1-form given in (\ref{0000}) with $Q$ being replaced by $\widetilde Q=G(t)QG^{-1}(t)$. This shows that Eq.(\ref{tilde}) is gauge transformed to $Q_t=\widetilde X^{'(L+2)}$. The proof that the $(L+2)$-th Fordy-Kulish's NLS equation $Q_t=\widetilde X^{'(L+2)}$  is gauge equivalent to Eq.(\ref{tilde}) is completed.

Since $L$-th Langer-Perline's LIE equation $\gamma_t=X^{'(L)}$ is equivalent to Eq.(\ref{tilde}), we thus have proved the equivalence between the $(L+2)$-th Fordy-Kulish's NLS equation $Q_t=\widetilde X^{'(L+2)}$ and the $L$-th Langer-Perline's LIE equation ${\gamma}_t=X^{'(L)}$.
\end{proof}

According to the terminology in \cite{LP}, the motion model of the $L$-th Langer-Perline's LIE equation ${\gamma}_t=X^{'(L)}$ in $\mathfrak g$ is a geometric realization of the $(L+2)$-th Fordy-Kulish's NLS equation $Q_t=\widetilde X^{'(L+2)}$. The geometric realization of the generalized Schr\"odinger equation (\ref{3-wave}) in $\mathfrak u(3)$ is described in detail in \cite{DDZ}.

\section*{\S5. Reduction correspondence}
In \cite{FK}, Fordy and Kulish pointed out roughly the reduction between the complex flag manifold $F^{\mu}_3(1,1,1)=\frac{SU(3)}{S(U(1)\times U(1)\times U(1))}$ and the complex symmetric space $\mathbb CP^2=\frac{SU(3)}{S(U(1)\times U(2))}$:
\begin{eqnarray}
\frac{U(3)}{U(1)\times U(2)}\equiv \frac{SU(3)}{S(U(1)\times U(2))}\hookrightarrow \frac{SU(3)}{S(U(1)\times U(1)\times U(1))}\equiv \frac{U(3)}{U(1)\times U(1)\times U(1)}\nonumber
\end{eqnarray}
and its fascinating consequence that the $3$-rd Fordy-Kulish's NLS equation (\ref{3-wave}) relating to $F^{\mu}_3(1,1,1)$ in ${\mathfrak g}=\mathfrak u(3)$ deduces to the 2-vector nonlinear Schr\"odinger equation relating to $\mathbb CP^2$ in $\mathfrak h=\mathfrak u(3)$.
In this section, we shall explore the mechanics behind this reduction in the framework of Hausdorff convergence in Riemannian manifolds.

Let's first review some basic relevant facts about the Hausdorff distance $d_H$ and the $d_H$-convergence in $\mathfrak u(n)$. With the canonical Killing metric $\langle X,Y\rangle =-\hbox{tr}(XY)$ on $\mathfrak u(n)$, it is well known that the Hausdorff distance between a flag orbit space ${\rm F}_s^{\rm \mu}(k_1,\cdots,k_s)$ ($s\ge3$, $k_1+\cdots+k_s=n$) and a symmetric space ${\rm F}_2^{(1,-1)}(k,n-k)=Gr(k,n-k)$ is defined by
\begin{eqnarray*}
d_H(F^{\rm \mu}_s(k_1,\cdots,k_s),Gr(k,n-k))&=& max\Bigl\{\sup\limits_{X\in Gr(k,n-k)}dist(X,F^{\rm \mu}_s(k_1,k_2,\dots,k_s)),\\
&&\sup\limits_{Y\in F^{\rm \mu}_s(k_1,k_2,\cdots,k_s)}dist(Y,Gr(k,n-k))\Bigr\}.
\end{eqnarray*}
For a family of Hermitian reductive Lie algebra $\mathfrak g=\mathfrak u(n)$ given by (\ref{decomp}) with $s\ge3$ parameterized by $\mu$, if there exists an integer $\nu$ such that $k=k_1+\cdots+k_{\nu}$, $n-k=k_{\nu+1}+\cdots+k_s$, and $\mu_1,\mu_2,\cdots,\mu_{\nu} \to 1$, $\mu_{\nu+1},\mu_{\nu+2},\cdots,\mu_{s} \to -1$,  then we have
$$\omega = \frac{\sqrt{-1}}{2}
\begin{pmatrix}
\mu_1 I_{k_1} & & & \\
& \mu_2 I_{k_2} & & \\
& & & \ddots & \\
& & & & \mu_s I_{k_s}
\end{pmatrix}~\to~ \sigma_3=\frac{\sqrt{-1}}{2}\left(\begin{array}{cc}
I_k&0\\0&-I_{n-k}\end{array}\right)$$
and denote it by $\omega\to \sigma_3$.
For the purpose of simplicity, we set $\mu_i=1+i\Delta\mu$ ($i=1,\cdots,\nu$) and $\mu_{i}=-1+i\Delta\mu$ ($i=\nu+1,\cdots,s$), where $\Delta\mu$ is an infinitesimal real parameter. Such choice of $\mu_{i}$ guarantees that $\mu_i$ ($i=1,2,\cdots,s$) are not only different, but also are equi-convergence as $\Delta \mu\to0$. We will always make this convention throughout the section unless otherwise declaration and call this convergence as the equi-convergence.

\begin{Theorem} \label{cv}
For the $\mu$-family of complex flag manifolds ${\rm F}^{\rm\mu}_s(k_1,\cdots,k_s)=\{E^{-1}\omega E|E\in U(n)\}$ with parameters $\mu_i=1+i\Delta\mu$ ($i=1,\cdots,\nu$) and $\mu_{j}=-1+i\Delta\mu$ ($i=\nu+1,\cdots,s$), we have that
\begin{eqnarray}
\lim_{\Delta\mu\to0}\hbox{\rm F}^{\rm \mu}_s(k_1,\cdots,k_s)
= \hbox{Gr}(k,n-k)\label{lim}
\end{eqnarray}
in the sense of Hausdorff convergence.
\end{Theorem}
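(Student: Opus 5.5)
The plan is to use the fact that both spaces are adjoint orbits of the same group, and to compare them point by point through the same conjugating matrix. Write $\omega=\omega(\Delta\mu)$ for the diagonal matrix (\ref{03}) with the prescribed $\mu_i$. Then $\mathrm{F}^{\mu}_s(k_1,\cdots,k_s)=\{E^{-1}\omega E\,|\,E\in U(n)\}$ and $Gr(k,n-k)=\{E^{-1}\sigma_3E\,|\,E\in U(n)\}$, where $k=k_1+\cdots+k_\nu$. Both are orbits under the adjoint action of one group $U(n)$, and this action is an isometry for the Killing metric $\langle X,Y\rangle=-\mathrm{tr}(XY)$. The strategy is therefore to pair the point $E^{-1}\omega E$ of the flag manifold with the point $E^{-1}\sigma_3E$ of the Grassmannian, using the same $E$. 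This reduces both suprema in the definition of $d_H$ to a single quantity, $|\omega-\sigma_3|$.

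The key steps are as follows.

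\begin{enumerate}[label=(\roman*)]
\item Take any $Y=E^{-1}\omega E$ in the flag manifold. By Ad-invariance of the norm,
\[
\mathrm{dist}\bigl(Y,Gr(k,n-k)\bigr)\le |E^{-1}\omega E-E^{-1}\sigma_3E| = |E^{-1}(\omega-\sigma_3)E| = |\omega-\sigma_3|.
\]
\item Take any $X=E^{-1}\sigma_3E$ in $Gr(k,n-k)$. The same computation gives
\[
\mathrm{dist}\bigl(X,\mathrm{F}^{\mu}_s(k_1,\cdots,k_s)\bigr)\le|\omega-\sigma_3|.
\]
\item Combining (i) and (ii) yields
\[
d_H\bigl(\mathrm{F}^{\mu}_s(k_1,\cdots,k_s),Gr(k,n-k)\bigr)\le|\omega-\sigma_3|.
\]
\item Compute the right-hand side from the diagonal entries. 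The entries of $\omega-\sigma_3$ are $\frac{\sqrt{-1}}{2}(\mu_i\mp1)$, and in every case $\mu_i\mp1=i\Delta\mu$. Hence
\[
|\omega-\sigma_3|^2=\frac14\sum_{i=1}^{s}k_i\,i^2(\Delta\mu)^2 ,
\]
which tends to $0$ as $\Delta\mu\to0$, proving (\ref{lim}).
\end{enumerate}

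The estimate is valid for the Hausdorff distance taken in the ambient Euclidean space $(\mathfrak u(n),\langle\cdot,\cdot\rangle)$, which is the setting in which the paper defines $d_H$. Distinctness of the $\mu_i$ for small $\Delta\mu\neq0$ ensures that each $\mathrm{F}^{\mu}_s$ really is the flag manifold $U(n)/U(k_1)\times\cdots\times U(k_s)$.

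The only subtle point is the definition itself. If $d_H$ were read intrinsically, inside some fixed Riemannian manifold, the two spaces would have different dimensions and would not be subsets of a common manifold, so that reading cannot be intended. I would stress that $\mathrm{dist}$ is the extrinsic distance in $\mathfrak u(n)$. The remaining steps are routine consequences of the orbit description and Ad-invariance.
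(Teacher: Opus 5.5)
Your proposal is correct and follows the paper's own route: you pair $E^{-1}\sigma_3E$ with $E^{-1}\omega E$ through the same $E$, use Ad-invariance to bound both one-sided distances by $|\omega-\sigma_3|$, and compute that norm from the diagonal entries $\mu_i\mp1=i\Delta\mu$. The only difference is normalization: the paper measures distance by $\sqrt{-2n\,\mathrm{tr}(X-Y)^2}$ and so gets the constant $\sqrt{\frac{n}{2}\sum_i k_i i^2}$ where you get $\frac12\sqrt{\sum_i k_i i^2}$, which does not affect the limit.
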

\begin{proof}
    We shall compute the Hausdorff distance between ${\rm F}^{\rm\mu}_s(k_1,\cdots,k_s)$ and $Gr(k,n-k)$ by applying the formula
    \begin{eqnarray*}
    d_{H}({\rm F}^{\rm\mu}_s(k_1,k_2,\dots,k_s),Gr(k,n-k))& = & max\{\sup\limits_{X\in Gr(k,n-k)}dist(X,{\rm F}^{\rm\mu}_s(k_1,k_2,\dots,k_s))\\
    &&\sup\limits_{Y\in {\rm F}^{\rm\mu}_s(k_1,k_2,\cdots,k_s)}dist(Y,Gr(k,n-k))\}.
    \end{eqnarray*}
For $X\in Gr(k,n-k)$, we have
    \begin{eqnarray*}
    dist(X,{\rm F}^{\rm\mu}_s(k_1,k_2,\cdots,k_s))& = &\inf\limits_{Y\in {\rm F}^{\rm\mu}_s(k_1,k_2,\cdots,k_s)}dist(X,Y) \\
    & = &\inf\limits_{Y\in {\rm F}^{\rm\mu}_s(k_1,k_2,\cdots,k_s)}\sqrt{-2ntr(X-Y,X-Y)}\\
    & = &\inf\limits_{E_2\in U(n)}\sqrt{-2ntr{(E_1^{-1}\omega E_1-E_2^{-1}\sigma_3E_2)}^2}\\
    & \leq &\sqrt{-2ntr{(E_1^{-1}\omega E_1-E_1^{-1}\sigma_{3}E_1)}^2}\\
    & = & \sqrt{{\frac{n}{2}}(\sum\limits_{i=1}^\nu k_i(\mu_i-1)^2+\sum\limits_{i=\nu+1}^s k_i(\mu_i+1)^2)}\\
    & = &\sqrt{\frac{n}{2}\sum\limits_{i=1}^s k_i i^2}~~|\Delta\mu|.
    \end{eqnarray*}
One sees that the right-hand-side of the above last equality is independent of $X\in Gr(k,n-k)$. Similarly, for $Y\in {\rm F}^{\rm\mu}_s(k_1,k_2,\cdots,k_s)$, we also have
\begin{eqnarray*}
    dist(Y,Gr(k,n-k))& = &\inf\limits_{X\in Gr(k,n-k)}dist(X,Y)
    \le \sqrt{\frac{n}{2}\sum\limits_{i=1}^s k_i i^2}~~|\Delta\mu|.
    \end{eqnarray*}
Hence
\begin{eqnarray*}
    d_{H}({\rm F}^{\rm\mu}_s(k_1,k_2,\dots,k_s),Gr(k,n-k))& \le & \sqrt{\frac{1}{2}n\sum\limits_{i=1}^s k_i i^2}~~|\Delta\mu|,
    \end{eqnarray*}
which implies (\ref{lim}) and the proof of Theorem \ref{cv} is completed.
\end{proof}

The equi-convergence of $\{\hbox{\rm F}^{\rm \mu}_s(k_1,\cdots,k_s)\}$ to $\hbox{Gr}(k,n-k)$ described in Theorem \ref{cv} is also called that $\{\hbox{\rm F}^{\rm \mu}_s(k_1,\cdots,k_s)\}$ equi-collapses to $\hbox{Gr}(k,n-k)$, or simply, $\hbox{\rm F}^{\rm \mu}_s(k_1,\cdots,k_s)$ equi-collapses to $\hbox{Gr}(k,n-k)$. Now we run into a natural question, that is, when $F^{\rm \mu}_s(k_1,\dots,k_s)$ equi-collapses to the symmetric space $Gr(k,n-k)$, do the three basic models (\ref{GGLIE}), (\ref{GGKDV}) and (\ref{Ga4}) in $\mathfrak g$ revert respectively to the vortex equations (\ref{GLIE1}), (\ref{GKDV1}) and (\ref{GFM11}) in $\mathfrak h$? If this is the case, then the three models are exactly the leading-order, second-order and third-order vortex models in the reductive Lie algebra ${\mathfrak g}=\mathfrak u(n)$.
In order to answer this question, we should return to Lie algebras. Let's denote the decomposition of ${\mathfrak g}=\mathfrak u(n)$ corresponding to $F^{\rm \mu}_s(k_1,\dots,k_s)$ by $\mathfrak{g}={\mathfrak m'+\mathfrak k'}$, and the decomposition of ${\mathfrak h}=\mathfrak u(n)$ corresponding to $Gr(k,n-k)$ by $\mathfrak{h=m+k}$. One notes that in this case, $\mathfrak m\subset \mathfrak m'$, $\mathfrak k'\subset \mathfrak k$. Hence
the subspace $\mathfrak{m'}$ in the decomposition  $\mathfrak{g=m'+k'}$ is divided into two parts:
${\mathfrak m}_1={\mathfrak m'}|_{\mathfrak m}=\mathfrak m$ and ${\mathfrak m}_2={\mathfrak m'}|_{\mathfrak k}$. So we have
$\mathfrak{h} = \mathfrak{m} + \mathfrak{k} = \mathfrak{m} + (\mathfrak{m}_2 + \mathfrak{k}')$ and
$\mathfrak{g} = \mathfrak{m}' + \mathfrak{k}' = (\mathfrak{m}_1 + \mathfrak{m}_2) + \mathfrak{k}'$. Additionally, one has that
\begin{align*}
&[\mathfrak m_1,\mathfrak m_1] \subset \mathfrak{k} = \mathfrak{k}' + \mathfrak{m}_2,~
[\mathfrak m_2,\mathfrak m_2] \subset \mathfrak{k} = \mathfrak{k}' + \mathfrak{m}_2,~
[\mathfrak m_1, \mathfrak m_2] \subset \mathfrak{m}_1.
\end{align*}

Based on the above considerations, we come again to the Lax pair (\ref{05}) in the Hermitian reductive Lie algebra $\mathfrak g$, where the potential function $Q\in \mathfrak m^{\prime}$ depends on $x$, $t$ and, probably, on the parameters $\mu$, $V'_L=\sum\limits_{j=0}^{L}V^{'(j)}(x,t)\lambda^{L-j}\in \mathfrak g$ is the polynomial ansatz. Below in this section, we just focus on smooth solutions to Eq.(\ref{05}) and also smooth solutions to the associated integrable equation $Q_t=\widetilde{X}^{'(L)}$, unless otherwise stated. Let
$Q=Q(x,t;\mu)$ be a smooth solution to the $(L+1)$-th Fordy-Kulish's NLS equation $Q_t=\widetilde{X}^{'(L)}$ and $E=E(x,t,\lambda;\mu)\in U(n)$ be a smooth solution to Eq.(\ref{05}).
We claim that when $\Delta\mu \to 0$, the limit matrix $\widetilde E(x,t,\lambda)\in U(n)$ of $E=E(x,t,\lambda;\mu)$ satisfies $\widetilde E_x=(\lambda\sigma_3+P)\widetilde E$, where
\begin{eqnarray}P=\lim_{\Delta\mu\to0}Q\in \mathfrak m.\label{QP}\end{eqnarray}
In fact, because of the compactness of $U(n)$, $\lim\limits_{\Delta\mu\to0}E=\widetilde E\in U(n)$ exists obviously for any fixed $(x,t,\lambda)$. Furthermore, since the smoothness of $E$ and $x$ is independent of $\mu$, we also have that $\lim\limits_{\Delta\mu\to0}E_x=\widetilde E_x$ and $\widetilde E_x=(\lambda\sigma_3+P)\widetilde E$ with $P=\lim\limits_{\Delta\mu\to0}Q$. What remains us to do is to show that $P\in \mathfrak m$. Indeed, we see that when $\Delta\mu \to 0$, the ${\rm F}^{\rm \mu}_s(k_1,\cdots,k_s)$ equi-collapse to $Gr(k,n-k)$, the infinitesimal version of this convergence is: $\mathfrak {g=k'\oplus m'} \to \mathfrak {h=k\oplus m}$. Especially, the tangent space $\mathfrak m'$ of ${\rm F}^{\rm \mu}_s(k_1,\cdots,k_s)$ goes to the tangent space $\mathfrak m$ of $Gr(k,n-k)$ as $\Delta\mu \to 0$. This means that
$$
Q=E_xE^{-1}|_{\lambda=0}\in \mathfrak m'\to \widetilde E_x\widetilde E^{-1}|_{\lambda=0}=P\in \mathfrak {m} ~~\hbox{as}~~\Delta\mu\to0.
$$
Hence (\ref{QP}) is valid and the claim is verified. One also notes that, because $Q(x,t;\mu)$ is also smooth in $\mu$, (\ref{QP}) is equivalent to (by the mean value theorem)
\begin{equation}
Q=Q(x,t;\mu)=P(x,t)+C(x,t;\mu)\cdot \Delta\mu, \label{infinity}
\end{equation}
where $C(x,t;\mu)=\sum_{j=1}^{\nu}j\frac{\partial Q}{\partial \mu_j}(x,t;1+j\theta_j\Delta\mu)+\sum_{j=\nu+1}^{s}j\frac{\partial Q}{\partial \mu_j}(x,t;-1+j\theta_j\Delta\mu)$ for some $\theta_j$ with $0<\theta_j<1$ ($1\le j\le s $).
\begin{Lemma} \label{lem}
For the above smooth potential functions $Q(x,t;\mu)$ and $P(x,t)$, if we denote $Q(x,t;\mu)=Q_1(x,t;\mu)+Q_2(x,t;\mu)$ with $Q_i(x,t;\mu)=Q(x,t;\mu)|_{\mathfrak m_i}\in\mathfrak m_i$ ($i=1,2$), then we have

(a) For $\forall \alpha_1,\alpha_2\ge0 ~\hbox{with}~\alpha_1,\alpha_2\in \mathbb N$,
\begin{eqnarray}
\partial^{\alpha_1}_x \partial^{\alpha_2}_tQ(x,t;\mu)~~\rightarrow~~\partial^{\alpha_1}_x \partial^{\alpha_2}_t P(x,t),\label{part}
\end{eqnarray}

(b)
\begin{eqnarray}
\lim_{\Delta \mu \to 0} \frac{Q_2(x,t;\mu)}{\Delta\mu}=C'_2(x,t),
\end{eqnarray}
for some $\mathfrak m_2$-valued smooth function $C'_2(x,t)$.
\end{Lemma}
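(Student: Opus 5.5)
The plan is to read both parts off the expansion (\ref{infinity}), $Q(x,t;\mu)=P(x,t)+C(x,t;\mu)\Delta\mu$, combined with the joint smoothness of $Q$ in $(x,t,\mu)$ that the section assumes. Throughout I treat $Q$ as a smooth function of $(x,t)$ and of the single parameter $\Delta\mu$. This is legitimate because $\mu_i=\pm1+i\Delta\mu$ depends affinely on $\Delta\mu$.

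\textbf{Part (a).} I would write
\[
Q(x,t;\mu)-P(x,t)=\Delta\mu\int_0^1 \partial_{\epsilon}Q(x,t;\mu(\theta\Delta\mu))\,d\theta ,
\]
where $\partial_\epsilon Q=\sum_j j\,\partial Q/\partial\mu_j$. This is the integral form of the mean value remainder $C$, and it avoids the awkward $\theta_j$ depending on $(x,t)$. Since $Q$ is smooth jointly in $(x,t,\Delta\mu)$, I can differentiate under the integral sign. This gives
\[
\partial_x^{\alpha_1}\partial_t^{\alpha_2}Q-\partial_x^{\alpha_1}\partial_t^{\alpha_2}P=\Delta\mu\int_0^1\partial_x^{\alpha_1}\partial_t^{\alpha_2}\partial_\epsilon Q\,d\theta .
\]
The integrand is continuous, hence bounded on compact sets of $(x,t)$ and for $|\Delta\mu|\le1$. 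So the right-hand side tends to $0$, locally uniformly, which proves (\ref{part}). Equivalently, mixed partials commute, and the limit $\Delta\mu\to0$ of a jointly continuous function equals its value at $\Delta\mu=0$; that value is $P$ by (\ref{QP}).

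\textbf{Part (b).} By the claim preceding the lemma, $P\in\mathfrak m=\mathfrak m_1$. Hence its $\mathfrak m_2$-component vanishes. The projection onto $\mathfrak m_2$ is linear and independent of $\mu$: $\mathfrak m_2$ consists of the off-diagonal blocks $(i,j)$ with $i,j\le\nu$ or $i,j>\nu$, and this splitting is fixed once $k_1,\dots,k_s$ and $\nu$ are fixed. Applying it to (\ref{infinity}) gives
\[
Q_2(x,t;\mu)=\Delta\mu\,\big(C(x,t;\mu)\big)_{\mathfrak m_2}=\Delta\mu\int_0^1(\partial_\epsilon Q)_{\mathfrak m_2}(x,t;\theta\Delta\mu)\,d\theta .
\]
Therefore $Q_2/\Delta\mu$ equals the integral. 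By continuity, this tends to
\[
C_2'(x,t):=(\partial_\epsilon Q)_{\mathfrak m_2}(x,t;0)=\sum_{j}j\Big(\frac{\partial Q}{\partial\mu_j}\Big)_{\mathfrak m_2}\Big|_{\Delta\mu=0}.
\]
This limit is smooth in $(x,t)$ and $\mathfrak m_2$-valued.

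\textbf{Main obstacle.} The only genuinely delicate point is justifying that $Q$ is smooth (at least $C^1$ in $\Delta\mu$, and $C^\infty$ in $(x,t)$ with derivatives continuous in $\Delta\mu$) up to and including $\Delta\mu=0$. At $\Delta\mu=0$ the operator $ad_\omega^{-1}$ degenerates on $\mathfrak m_2$, since $\mu_i-\mu_j=O(\Delta\mu)$ there. So smoothness cannot come from the equation $Q_t=\widetilde X^{'(L)}$ itself, which has coefficients blowing up. I would therefore take smoothness in $\mu$ as part of the standing hypothesis on the solution family, exactly as the paper does when deriving (\ref{infinity}). I would also note explicitly that (a) and (b) then follow from that hypothesis together with $P\in\mathfrak m$, and not from the dynamics.
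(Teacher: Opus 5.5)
Your proposal is correct and follows the paper's route. Part (a) comes from differentiating the expansion (\ref{infinity}) in $x,t$, and part (b) from projecting (\ref{infinity}) onto $\mathfrak m_2$, where the $P$-term drops out because $P\in\mathfrak m=\mathfrak m_1$; both rest on the standing assumption that $Q$ is smooth in $\mu$ up to $\Delta\mu=0$. Your integral remainder $\Delta\mu\int_0^1\partial_\epsilon Q\,d\theta$ replaces the paper's Lagrange-form $C$, whose $\theta_j$ depend on $(x,t)$, and so properly justifies differentiating $C$ in $(x,t)$ and the existence and smoothness of $\lim_{\Delta\mu\to0}C_2$.
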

\begin{proof}
(a)
Since $x$ and $t$ are independent of $\mu$ and the $\mathfrak g$-valued functions $Q(x,t;\mu)$, $P(x,t)$ and $C(x,t;\mu)$ in (\ref{infinity}) are smooth, taking derivatives with respect to $x$ or $t$ to any given order on both sides of (\ref{infinity}), we then have (a).

(b) Restricting equation (\ref{infinity}) to $\mathfrak m_2$-part, we obtain
\begin{equation}
Q_2(x,t;\mu)=C_2(x,t;\mu)\cdot \Delta\mu, \label{C2}
\end{equation}
where $C_2(x,t;\mu)$ is the restriction of $C(x,t;\mu)$ to $\mathfrak m_2$. Hence
\begin{eqnarray}
\lim_{\Delta\mu \to 0}\frac{Q_2(x,t;\mu)}{\Delta\mu}&=&\lim_{\Delta\mu \to 0}C_2(x,t;\mu)\nn\\
&=&C_2'(x,t),
\end{eqnarray}
for some smooth $\mathfrak m_2$-valued function $C'_2(x,t)$.
\end{proof}

\begin{Lemma}\label{Le1}
     When $\Delta\mu\to0$, or in other words, $\omega$ equi-converges to $\sigma_3$, we have that smooth solutions to the $L$-th Langer-Perline's LIE equation ${\gamma}_t=X^{'(L)}$ in the Hermitian reductive Lie algebra
     $\mathfrak g=\mathfrak u(n)$ revert to smooth solutions to the equation $\gamma_t=X^{(L)}$  in the symmetric Lie algebra $\mathfrak h=\mathfrak u(n)$ for any given integer $L\ge1$.
\end{Lemma}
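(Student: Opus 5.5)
We argue by induction on $L$, at the level of the coefficient fields $V^{'(L)}$ produced by the recursion (\ref{eq:1})--(\ref{eq:3}) (equivalently by $\mathcal R_{red}$ and $\widetilde{\mathcal R}_{red}$). Recall from \S3 that $X^{'(L)}=\{V^{'(L)}\}$ and $X^{(L)}=\{V^{(L)}\}$, where $V^{(L)}$ is generated from $V^{(0)}=\sigma_3$ by the symmetric recursion with potential $P$. By Lemma \ref{lem}(a) and the convergence $E\to\widetilde E$ established before (\ref{QP}), it suffices to prove
\[
\lim_{\Delta\mu\to0}V^{'(L)}=V^{(L)}\quad\text{for every } L\ge0,
\]
together with the convergence of all $x$-derivatives of $V^{'(L)}$. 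Once this is known, the conjugation by $\phi$ and all derivatives pass to the limit. Hence $\gamma_t=\{V^{'(L)}\}\to\{V^{(L)}\}=X^{(L)}$ and $\gamma_x=\{\omega\}\to\{\sigma_3\}$.

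\emph{Case $L=1$.} Here $V^{'(1)}=Q\to P=V^{(1)}$ by (\ref{QP}).

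\emph{Inductive step.} Decompose each $\mathfrak m'$-field as $X=X_1+X_2$ with $X_i\in\mathfrak m_i$. The key observation is the behaviour of $ad_\omega^{-1}$ on the two pieces:
\begin{itemize}
\item On $\mathfrak m_1$, block $(i,j)$ with $i\le\nu<j$ is multiplied by a factor $\frac{-2\sqrt{-1}}{\mu_i-\mu_j}\to -\sqrt{-1}$. This is exactly $ad_{\sigma_3}^{-1}$.
\item On $\mathfrak m_2$, the factor is $\frac{-2\sqrt{-1}}{(i-j)\Delta\mu}$, which blows up.
\end{itemize}
So the $\mathfrak m_2$-components must be shown to be $O(\Delta\mu)$ at each stage. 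At the first stage this is Lemma \ref{lem}(b): $Q_2=O(\Delta\mu)$. Strengthen the induction hypothesis accordingly to: for $j\le L$,
\[
(V^{'(j)}_{\mathfrak m'})_1\to V^{(j)}_{\mathfrak m},\qquad (V^{'(j)}_{\mathfrak m'})_2+V^{'(j)}_{\mathfrak k'}\to V^{(j)}_{\mathfrak k},
\]
and the right-hand side of (\ref{eq:2}), projected to $\mathfrak m_2$, is $O(\Delta\mu)$.

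With this hypothesis, apply (\ref{eq:2}):
\begin{itemize}
\item The $\mathfrak m_1$-part of $V^{'(j+1)}_{\mathfrak m'}$ tends to $ad_{\sigma_3}^{-1}\big((V^{(j)}_{\mathfrak m})_x-[P,V^{(j)}_{\mathfrak k}]\big)$. This uses the bracket relations $[\mathfrak m_1,\mathfrak m_2]\subset\mathfrak m_1$ and $[\mathfrak m_1,\mathfrak m_1]\subset\mathfrak k'+\mathfrak m_2$, so the $\mathfrak m_1$-projection collects precisely the terms of the symmetric recursion.
\item The $\mathfrak m_2$-part stays bounded, because $O(\Delta\mu)/\Delta\mu$ is bounded.
\end{itemize}
Then (\ref{eq:3}) combined with the $\mathfrak m_2$-part reproduces $V^{(j+1)}_{\mathfrak k}=\partial_x^{-1}[P,V^{(j+1)}_{\mathfrak m}]$ in the limit. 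The reason is that in $\mathfrak h$ the sum $\mathfrak m_2+\mathfrak k'$ is exactly $\mathfrak k$, and the equation $\partial_x V_{\mathfrak k}=[P,V_{\mathfrak m}]$ splits into its $\mathfrak k'$- and $\mathfrak m_2$-components.

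\emph{Main obstacle.} The hard step is propagating the smallness of the $\mathfrak m_2$-projection of $\partial_xV^{'(j)}_{\mathfrak m'}-[Q,V^{'(j)}]_{\mathfrak m'}$ to the next level; without it, $ad_\omega^{-1}$ produces $1/\Delta\mu$ singularities. The first check is to verify directly that the $\mathfrak m_2$-projection of $(V^{'(j)}_{\mathfrak k'}+(V^{'(j)})_2)_x-[Q,V^{'(j)}]$ is exactly the $\mathfrak m_2$-component of the limiting $\mathfrak k$-equation, which vanishes in $\mathfrak h$, plus terms carrying a factor $Q_2$ or $\Delta\mu$. If that fails, the fallback uses the identity $\{V^{'(j)}\}_x=\{[\omega, \cdot]\}$-type relations coming from the zero curvature condition of the Lax pair, which hold for every $\mu$: the $\lambda^{L-j}$ coefficient of $U_t-V_x+[U,V]=0$ gives $[\omega,V^{'(j+1)}]=\partial_xV^{'(j)}-[Q,V^{'(j)}]$ exactly. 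Applied at $j+1$, this forces the $\mathfrak m_2$-part of the right-hand side to equal $[\omega,\cdot]$ of a bounded quantity, hence to be $O(\Delta\mu)$. This uses the smoothness of the solutions in $\mu$ assumed at the start of \S5.
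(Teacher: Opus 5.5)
\textbf{There is a genuine gap at the step you yourself flag as the main obstacle, and neither of your proposed fixes closes it.}

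Given your inductive hypothesis, the $\mathfrak m_1$-part of your step is correct. Your observation that $\mathfrak m_2+\mathfrak k'=\mathfrak k$ turns the limiting recursion into the symmetric one is the same mechanism the paper uses.

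The recursion (\ref{eq:1})--(\ref{eq:3}) together with Lemma \ref{lem} does not by itself give $V^{'(j)}\to V^{(j)}$. Two levels already show this.
\begin{itemize}
\item $(V^{'(2)}_{\mathfrak m'})_2=ad_\omega^{-1}(Q_2)_x$ converges to a fixed block-rescaling of $(C_2')_x$, where $C_2'$ is the unspecified coefficient from Lemma \ref{lem}(b). You need this limit to equal the $\mathfrak m_2$-component of $V^{(2)}_{\mathfrak k}=\frac12[P,ad_{\sigma_3}^{-1}P]$, and nothing in the recursion forces that.
\item $(V^{'(3)}_{\mathfrak m'})_2$ contains $ad_\omega^{-1}\big(ad_\omega^{-1}(Q_2)_{xx}-[Q_1,ad_\omega^{-1}(Q_1)_x]_{\mathfrak m_2}\big)$. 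This is of order $1/\Delta\mu$ unless $C_2'$ satisfies a differential constraint. In the $\mathfrak u(3)$ example that constraint is $\lim\varphi_{2x}/(\mu_2-\mu_3)=-\frac12\bar\varphi_1\varphi_3$, and the paper obtains it from the $\varphi_2$-equation of (\ref{3-wave}), i.e.\ from the dynamics.
\end{itemize}

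Your two fixes fail for the following reasons.
\begin{itemize}
\item Your ``first check'' only shows that the $\mathfrak m_2$-part of the right-hand side of (\ref{eq:2}) tends to $0$. That is weaker than $O(\Delta\mu)$. Even if upgraded, it would not identify the limit after $ad_\omega^{-1}$ is applied.
\item Your fallback is circular. The $\lambda$-coefficients of the zero-curvature condition are exactly (\ref{eq:2})--(\ref{eq:3}), which hold identically by construction. The ``bounded quantity'' $V^{'(j+1)}$ it relies on is precisely what you are trying to control.
\end{itemize}

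Relatedly, (\ref{eq:3}) governs only the $\mathfrak k'$-component. The $\mathfrak m_2$-component of the limiting relation $\partial_xV^{(j+1)}_{\mathfrak k}=[P,V^{(j+1)}_{\mathfrak m}]$ is equivalent to $[\omega,(V^{'(j+2)}_{\mathfrak m'})_2]\to0$. That is a bound one level higher, so an upward induction on the coefficients cannot close on its own.

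The missing ingredient is the equation itself. For every $\mu$, $Q$ solves the flow, so the $\mathfrak m_2$-component of that flow has left-hand side $(Q_2)_t\to0$ by Lemma \ref{lem}. The paper uses exactly this:
\begin{itemize}
\item it passes via Theorem \ref{cor} to the Fordy--Kulish side;
\item it splits $\widetilde{\mathcal R}_{red}\widetilde X^{'(k)}$ into (\ref{R1}) and (\ref{R2});
\item it uses $(Q_2)_t\to0$ in (\ref{R2}) to get $ad_\omega^{-1}\widetilde X^{'(k)}_2\to\partial_x^{-1}[Q_1,ad_{\sigma_3}^{-1}\widetilde X^{'(k)}_1]_{\mathfrak m_2}$;
\item it substitutes this into (\ref{R1}) to recover $\widetilde{\mathcal R}_{sym}$.
\end{itemize}

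To stay on the Langer--Perline side, you would have to feed in the same information. By Theorem \ref{cor} it takes the form $(Q_2)_t=[\omega,(V^{'(L+2)}_{\mathfrak m'})_2]=O(\Delta\mu)$. You would then need to propagate the resulting bounds and limit identifications down to every intermediate level $j\le L+1$. That propagation is the real content of the lemma, and it is absent from your proposal.
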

\begin{proof} From Theorem \ref{cor}, we only need to prove that, when $\Delta\mu\to0$, smooth solutions to the equation $Q_t=\widetilde{X}^{'(L)}$ in the Hermitian reductive Lie algebra
$\mathfrak g=\mathfrak u(n)$ revert to smooth solutions to the equation $P_t={\widetilde X}^{(L)}$ in the symmetric Lie algebra $\mathfrak h=\mathfrak u(n)$ for any given integer $L\ge1$. We shall prove it by induction. In fact, we first consider the case $L=1$. When $\Delta\mu\to0$, we know that ${\rm F}^{\rm\mu}_s(k_1,\cdots,k_s)$ equi-collapses to $Gr(k,n-k)$ and the validity of Lemma 2 imply that
$$
Q_t\to P_t~~\hbox{and}~~[\omega,Q]\to [\sigma_3,P],~~\hbox{hence}~~Q_t={\widetilde X}^{'(1)}\to P_t={\widetilde X}^{(1)}.
$$
Lemma 3 is true for $L=1$. We now set $Q_1=Q|_{{\mathfrak m}_1}$ ,$Q_2=Q|_{{\mathfrak m}_2}$, ${{\widetilde X}}_1^{'(k)}={{\widetilde X}}^{'(k)}|_{{\mathfrak m}_1}$ , ${{\widetilde X}}_2^{'(k)}={{\widetilde X}}^{'(k)}|_{{\mathfrak m}_2}$ for a general $k\in \mathbb N$.
Assuming that Lemma 3 is true for $L=k\ge1$, in other words, $Q_t={\widetilde X}^{'(k)} \to P_t={\widetilde X}^{(k)}$ as $\Delta\mu\to0$, i.e.
\begin{eqnarray}
(Q_1)_t={\widetilde X}_1^{'(k)}\to P_t={\widetilde X}^{(k)},      \quad (Q_2)_t={\widetilde X}_2^{'(k)}\to 0=0, \label{k}
\end{eqnarray}
we shall show that Lemma 3 is  true for $L=k+1$. In fact, by noting that
\begin{eqnarray*}
    {\widetilde X}^{'(k+1)}&=&{\widetilde {\mathcal{R}}}_{red}{\widetilde X}^{'(k)}=(\partial_x-ad_{Q}\partial_x^{-1}ad_{Q}|_{{\mathfrak k}'}-ad_{Q}|_{{\mathfrak m}'})ad_{\omega}^{-1}{\widetilde X}^{'(k)}\\
    &=& (\partial_x-ad_{Q_1+Q_2}\partial_x^{-1}ad_{Q_1+Q_2}|_{{\mathfrak k}'}-ad_{Q_1+Q_2}|_{{\mathfrak m}'})ad_{\omega}^{-1}({\widetilde X}^{'(k)}_1+{\widetilde X}^{'(k)}_2)\\
    &=&\partial_x(ad_{\omega}^{-1}({\widetilde X}^{'(k)}_1+{\widetilde X}^{'(k)}_2))-[Q_1+Q_2,\partial_x^{-1}([Q_1+Q_2,ad_{\omega}^{-1}({\widetilde X}^{'(k)}_1+{\widetilde X}^{'(k)}_2)]_{{\mathfrak k}'})]\\
    &&-[Q_1+Q_2,ad_{\omega}^{-1}({\widetilde X}^{'(k)}_1+{\widetilde X}^{'(k)}_2)]_{{\mathfrak m}'}
    \end{eqnarray*}
and restricting the above identity to the parts ${\mathfrak m}_1$ and ${\mathfrak m}_2$ respectively, we obtain
    \begin{eqnarray}
    (Q_1)_t&=&{\widetilde X}_1^{'(k+1)}=({\widetilde {\mathcal{R}}_{red}^{'(k)}}{\widetilde X}^{'(k)})|_{\mathfrak m_1} \nn\\
    &=& \partial_x ad_{\omega}^{-1}{\widetilde X}^{'(k)}_1-[Q_1,\partial_x^{-1}([Q_1,ad_{\omega}^{-1}{\widetilde X}^{'(k)}_1]_{{\mathfrak k}'}\nonumber+[Q_2,ad_{\omega}^{-1}{\widetilde X}^{'(k)}_2]_{{\mathfrak k}'})]\nn\\
    &&-[Q_1,ad_{\omega}^{-1}{\widetilde X}^{'(k)}_2]_{{\mathfrak m}_1}-[Q_2,ad_{\omega}^{-1} {\widetilde X}^{'(k)}_1]_{{\mathfrak m}_1},\label{R1}\\
    (Q_2)_t&=&{\widetilde X}_2^{'(k+1)}=({\widetilde {\mathcal{R}}_{red}^{'(k)}}{\widetilde X}^{'(k)})|_{\mathfrak m_2}\nn \\
    &=& \partial_x ad_{\omega}^{-1}{\widetilde X}^{'(k)}_2-[Q_2,\partial_x^{-1}([Q_1,ad_{\omega}^{-1}{\widetilde X}^{'(k)}_1]_{{\mathfrak k}'}+[Q_2,ad_{\omega}^{-1}{\widetilde X}^{'(k)}_2]_{{\mathfrak k}'})]\nonumber \\
    &&-[Q_1,ad_{\omega}^{-1}{\widetilde X}^{'(k)}_1]_{{\mathfrak m}_2}-[Q_2,ad_{\omega}^{-1}{\widetilde X}^{'(k)}_2]_{{\mathfrak m}_2}.\label{R2}
    \end{eqnarray}
As $\Delta\mu\to0$, we have $Q_t=(Q_1)_t+(Q_2)_t\rightarrow P_t$ from Lemma 2, that is to say,
\begin{eqnarray}
(Q_1)_t\rightarrow P_t,\quad (Q_2)_t\rightarrow 0. \label{tend}
\end{eqnarray}
Substituting the relations (\ref{k}) and (\ref{tend}) into (\ref{R2}) and by Lemma \ref{lem},  we have
    $$
    \partial_x ad_{\omega}^{-1}{\widetilde X}^{'(k)}_2-[Q_1,ad_{\sigma_3}^{-1}{\widetilde X}_1^{'(k)}]_{{\mathfrak m}_2}\to 0$$
which implies:
    $$ad_{\omega}^{-1}{\widetilde X}^{'(k)}_2\rightarrow\partial_x^{-1}([Q_1,ad_{\sigma_3}^{-1}{\widetilde X}_1^{'(k)}]_{{\mathfrak m}_2}).$$
Substituting this relation into the righthand-side of Eq.(\ref{R1}) and using the inductive hypothesis, we obtain that when $\Delta\mu\to0$,
    \begin{eqnarray*}
    ({\widetilde X}^{'(k+1)})|_{\mathfrak m_1}&=&({\widetilde {\mathcal{R}}_{red}}{\widetilde X}^{'(k)})|_{\mathfrak m_1} \\
    &\to& \partial_x ad_{\sigma_3}^{-1}{\widetilde X}^{'(k)}_1-[Q_1,\partial_x^{-1}([Q_1,ad_{\sigma_3}^{-1}{\widetilde X}^{'(k)}_1]_{{\mathfrak k}'})]-[Q_1,ad_{\omega}^{-1}{\widetilde X}^{'(k)}_2]_{{\mathfrak m}'}\\
    &\to & \partial_x ad_{\sigma_3}^{-1}{\widetilde X}_1^{'(k)}-[P,\partial_x^{-1}([P,ad_{\sigma_3}^{-1}{\widetilde X}_1^{'(k)}]_{{\mathfrak k}'})]-[P,\partial_x^{-1}([P,ad_{\sigma_3}^{-1}{\widetilde X}_1^{(k)}]_{{\mathfrak m}_2})]\\
    &\to & \partial_x ad_{\sigma_3}^{-1}{\widetilde X}^{(k)}-[P,\partial_x^{-1}([P,ad_{\sigma_3}^{-1}{\widetilde X}^{(k)}]_{\mathfrak {m}_2+\mathfrak k'=\mathfrak k})]\\
    &\overset{ad^{-1}_{\sigma_3}=-ad_{\sigma_3}}{=}&\widetilde{\mathcal{R}}_{sym}\widetilde{X}^{(k)}=\widetilde{X}^{(k+1)}
    \end{eqnarray*} and
    \begin{eqnarray*}
    {\widetilde X}^{'(k+1)}_2\to 0.
    \end{eqnarray*}
    Hence, we have that when $\Delta\mu\to0$,
    $$
    Q_t={\widetilde X}^{'(k+1)}\to P_t={\widetilde X}^{(k+1)}.
    $$
This completes the proof of Lemma \ref{Le1}.
\end{proof}

\begin{Lemma}\label{Le2}
When $F_s^{\mu}(k_1,\cdots,k_s)$ equi-collapses to $Gr(k,n-k)$, smooth solutions to Eq. (\ref{Ga4}) (i.e. Eq.(\ref{GFM12})) in the Hermitian reductive Lie algebra $\mathfrak g=\mathfrak u(n)$ revert to smooth solutions to Eq.(\ref{GFM11}) in the Hermitian symmetric Lie algebra $\mathfrak h=\mathfrak u(n)$.
\end{Lemma}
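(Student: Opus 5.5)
We prove Lemma \ref{Le2} along the lines of Lemma \ref{Le1}, reducing it to a statement about potentials. By Lemma 1, Eq.(\ref{GFM12}) is gauge equivalent to Eq.(\ref{Q4s}) through the frame $E$ of (\ref{Et}). Likewise, in $\mathfrak h$ the model (\ref{GFM11}) (i.e.\ (\ref{Bi})) is gauge equivalent to the fourth-order Schr\"odinger-like equation (\ref{gauge}) for $P$, via the prescribed curvature representation $F_{\widetilde A}=\widetilde K$ with $\widetilde A$ and $\widetilde K$ given by (\ref{tA-con}) and (\ref{gauge5}). Let $E(x,t;\mu)$ be a smooth frame with $\widetilde\gamma=E^{-1}\omega E$ and $E_x=QE$. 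By the compactness argument before Lemma \ref{lem}, $E\to\widetilde E$ with $\widetilde E_x=P\widetilde E$ and $P\in\mathfrak m$, so $\widetilde\gamma\to\widetilde E^{-1}\sigma_3\widetilde E\in Gr(k,n-k)$. It therefore suffices to show the following: as $\Delta\mu\to0$, the connection $\widetilde A'$ of (\ref{Newone}) converges to $\widetilde A$ of (\ref{tA-con}), and $\widetilde K'$ of (\ref{Newtwo}) converges to $\widetilde K$ of (\ref{gauge5}). Granting this, $F_{\widetilde A'}=\widetilde K'$ passes to the limit as $F_{\widetilde A}=\widetilde K$, because all $x,t$-derivatives converge by Lemma \ref{lem}(a). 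Undoing the gauge with $\widetilde E$, exactly as in the passage from (\ref{tA-con}) to (\ref{Bi}), then shows that the limit of $\widetilde\gamma$ solves (\ref{GFM11}).

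The convergence of the $\alpha$-part of $\hat V_4'$ is essentially already proved. The coefficients $V'^{(i)}$, $i\le4$, are generated from $V'^{(0)}=\omega$ by $\widetilde{\mathcal R}_{red}$ through (\ref{V'})--(\ref{V't}), i.e.\ $[\omega,V'^{(i)}]=\widetilde X'^{(i)}$ up to indexing. The inductive step in the proof of Lemma \ref{Le1} shows $\widetilde X'^{(k)}_1\to\widetilde X^{(k)}$ and $\widetilde X'^{(k)}_2\to0$. The same splitting into the $\mathfrak m_1$- and $\mathfrak m_2$-parts, together with $ad_\omega^{-1}\to ad_{\sigma_3}^{-1}=-ad_{\sigma_3}$ on $\mathfrak m_1$, therefore gives $V'^{(i)}_{\mathfrak m}\to V^{(i)}_{\mathfrak m}$ and $V'^{(i)}_{\mathfrak k'}+(\mathfrak m_2\hbox{-part})\to V^{(i)}_{\mathfrak k}$.

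The remaining work concerns the $(4\beta+\frac12\alpha)$-terms. We first check $\widetilde K'\to\widetilde K$. Write $Q=Q_1+Q_2$ with $Q_2=O(\Delta\mu)$ by Lemma \ref{lem}(b). Then $ad_\omega^{-1}Q_1\to ad_{\sigma_3}^{-1}P$, while $ad_\omega^{-1}Q_2$ is of size $Q_2/\Delta\mu$, since the eigenvalues of $ad_\omega$ on $\mathfrak m_2$ are $O(\Delta\mu)$; it therefore stays bounded and tends to a limit determined by $C_2'$. Using $[\mathfrak m_1,\mathfrak m_1]\subset\mathfrak k'+\mathfrak m_2$, the $\mathfrak k'$-projection in $[Q,ad_\omega^{-1}Q]_{\mathfrak k}$ must be compared with the full $\mathfrak k$-bracket $[P,ad_{\sigma_3}^{-1}P]$. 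The $\mathfrak m_2$-part that is lost has to be recovered from the bounded term $ad_\omega^{-1}Q_2$, as happened with $ad_\omega^{-1}\widetilde X'^{(k)}_2\to\partial_x^{-1}[Q_1,ad_{\sigma_3}^{-1}\widetilde X_1'^{(k)}]_{\mathfrak m_2}$ in Lemma \ref{Le1}. Concretely, I would use the $\mathfrak m_2$-component of the equation $Q_t=\widetilde X'^{(L)}$ (the analogue of (\ref{R2})) to identify $\lim ad_\omega^{-1}Q_2$ in terms of $P$. Next, the $\lambda^1$ coefficient $ad_\omega^{-1}[Q,[Q,ad_\omega^{-1}Q]_{\mathfrak k}]+\partial_x^{-1}[\cdots]_{\mathfrak k}$ must tend to $-[\sigma_3,ad_P^3\sigma_3]$ together with its $\mathfrak k$-completion, and $M'_{\mathfrak m},M'_{\mathfrak k}$ from (\ref{M'})--(\ref{M't}) must tend to $M_{\mathfrak m}=-[P,[P,ad_{\sigma_3}^{-1}P]_x]$ and $M_{\mathfrak k}$. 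This uses $ad_{\sigma_3}^{-2}=-1$ on $\mathfrak m$ and $[\mathfrak m,\mathfrak m]\subset\mathfrak k$ in the limit, so that the middle term of (\ref{M'}) collapses into the third.

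I expect the main obstacle to be the singular operator $ad_\omega^{-1}$ on $\mathfrak m_2$: it blows up like $1/\Delta\mu$, so every $\mathfrak m_2$-component must be shown to be $O(\Delta\mu)$ before it is inverted. This is the point where Lemma \ref{lem}(b) and the inductive control $\widetilde X_2'^{(k)}\to0$ have to be applied carefully. In particular, nested expressions such as $ad_\omega^{-2}[Q,[Q,ad_\omega^{-1}Q]_{\mathfrak k}]_x$ require showing that their $\mathfrak m_2$-parts vanish to second order in $\Delta\mu$, or cancel against the other terms of (\ref{M'}). I would handle this by expanding everything to first order in $\Delta\mu$ via (\ref{infinity}) and matching the terms block by block.
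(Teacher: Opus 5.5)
Your route is the paper's own. You reduce via Lemma 1 to the potential equation (\ref{Q4s}), show $\widetilde A'\to\widetilde A$ and $\widetilde K'\to\widetilde K$ coefficient by coefficient, pass the prescribed-curvature identity to the limit, and undo the gauge. The paper simply lists these limits. For $\widetilde K'$ it writes $[Q,[Q,ad_\omega^{-1}Q]_{\mathfrak k}]\to[P,[P,-[\sigma_3,P]]]$, i.e.\ it silently replaces the projection onto $\mathfrak k'$ (which is what $[\cdot]_{\mathfrak k}$ means in (\ref{Newtwo})) by the projection onto the $\mathfrak k=\mathfrak k'\oplus\mathfrak m_2$ of $\mathfrak h$. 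You rightly single out this mismatch as the crux, but your proposed repair cannot work, and the step $\widetilde K'\to\widetilde K$ fails.

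Since $ad_\omega$ preserves $\mathfrak m_1$ and $\mathfrak m_2$, and $[\mathfrak m_1,\mathfrak m_2]\subset\mathfrak m_1$, the cross terms $[Q_1,ad_\omega^{-1}Q_2]$ and $[Q_2,ad_\omega^{-1}Q_1]$ lie in $\mathfrak m_1$. They are therefore annihilated by the $\mathfrak k'$-projection, while $[Q_2,ad_\omega^{-1}Q_2]=O(\Delta\mu)$ by Lemma \ref{lem}(b). So $ad_\omega^{-1}Q_2$ never enters the limit of $\widetilde K'$, and whatever its limit is,
\begin{align*}
\lim_{\Delta\mu\to0}\widetilde K'&=-\lambda^2\Big(4\beta+\frac12\alpha\Big)\big[P,[P,ad_{\sigma_3}^{-1}P]_{\mathfrak k'}\big]\,dx\wedge dt\\
&=\widetilde K+\lambda^2\Big(4\beta+\frac12\alpha\Big)\big[P,[P,ad_{\sigma_3}^{-1}P]_{\mathfrak m_2}\big]\,dx\wedge dt .
\end{align*}
The correction is generically nonzero. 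For $F_3^{\mu}(1,1,1)\to\mathbb CP^2$, with $P$ carrying $\varphi_1,\varphi_3$ in positions $(1,2),(1,3)$, the block $[P,ad_{\sigma_3}^{-1}P]_{\mathfrak m_2}$ has $(2,3)$-entry $2\sqrt{-1}\,\bar\varphi_1\varphi_3$. Its bracket with $P$ has $(1,2)$-entry $2\sqrt{-1}\,|\varphi_3|^2\varphi_1$.

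The same defect hits the $(4\beta+\frac12\alpha)$-part of the $\lambda^1$-coefficient of $\hat V_4'$. Its $\mathfrak m_1$-component tends to $ad_{\sigma_3}^{-1}[P,[P,ad_{\sigma_3}^{-1}P]_{\mathfrak k'}]$ instead of $-[\sigma_3,ad_P^3\sigma_3]=ad_{\sigma_3}^{-1}[P,[P,ad_{\sigma_3}^{-1}P]]$. Nothing in it plays the role that the term $-ad_\omega^{-1}[Q_1,ad_\omega^{-1}(Q_2)_x]$ inside $\alpha {V'}^{(3)}$ plays for the $\alpha$-part; that term is where the mechanism of Lemma \ref{Le1} restores the missing $\mathfrak m_2$-block. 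The $\lambda^2$-component of $F_{\widetilde A'}=\widetilde K'$ is an algebraic identity, as is its analogue for $\widetilde A,\widetilde K$ in $\mathfrak h$, so this also excludes $\widetilde A'\to\widetilde A$. The central step therefore fails as formulated, in your plan exactly as in the paper's proof. Closing it would require different $(4\beta+\frac12\alpha)$-terms in (\ref{Newtwo}) and (\ref{V4}), or a different argument.

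Two further points are weaker than you suggest.
\begin{itemize}
\item The $\alpha$-part is not inherited from Lemma \ref{Le1}. That induction obtains the $\mathfrak m_2$-relations it needs from $Q$ solving the flows $Q_t={\widetilde X}^{'(k)}$. An example is $\lim ad_\omega^{-1}(Q_2)_x=\frac12[P,ad_{\sigma_3}^{-1}P]_{\mathfrak m_2}$, which is precisely what makes ${V'}^{(2)}\to V^{(2)}$. A solution of (\ref{Q4s}) satisfies only the $\mathfrak m_2$-block of (\ref{Q4s}), and such relations would have to be derived from that block.
\item The $\mathfrak m_2$-blocks of ${V'}^{(3)}_{\mathfrak m}$ and ${V'}^{(4)}_{\mathfrak m}$ contain terms such as $ad_\omega^{-2}(Q_2)_{xx}$, $ad_\omega^{-1}[Q_1,ad_\omega^{-1}(Q_1)_x]_{\mathfrak m_2}$ and $ad_\omega^{-3}(Q_2)_{xxx}$. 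Knowing only $Q_2=O(\Delta\mu)$, these are of order $\Delta\mu^{-1}$ and $\Delta\mu^{-2}$. Checking that they cancel, and identifying the finite remainders, requires expanding $Q$ in $\Delta\mu$ to third order. The first-order expansion (\ref{infinity}) you propose is not sufficient.
\end{itemize}
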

\begin{proof}
Based on Lemma 1, we need only to show that smooth solutions to the model (\ref{Q4s}) in the reductive Lie algebra $\mathfrak g=\mathfrak u(n)$ revert to smooth solutions to the model (\ref{gauge}) in the symmetric Lie algebra $\mathfrak h=\mathfrak u(n)$, when $F_s^{\mu}(k_1,\cdots,k_s)
$ equi-collapses to $Gr(k,n-k)$.
For this purpose, let's revisit the prescribed curvature representation for Eq.(\ref{Q4s}). Recall that
$${\widetilde A}'=(\lambda\omega+Q)dx+\hat{V_4'}dt,~~
{\widetilde K}'=-\lambda^2(4\beta+\frac{1}{2}\alpha)[Q,[Q,ad_\omega^{-1}Q]_{\mathfrak k}]~dx\wedge dt$$
are the 1-form connection and the 2-form prescribed curvature on the trivial bundle $\mathbb R^2\times U(n)$ given respectively by (\ref{Newone}) and (\ref{Newtwo}), where $Q\in \mathfrak m$ and $\hat{V_4'}$ is given by (\ref{V4}).
When $\Delta \mu\to0$, i.e. $F_s^{\mu}(\mu_1,\cdots,\mu_s)$ equi-collapses to $G(k,n-k)$, from $(b)$ of Lemma \ref{lem} we see that $\lim\limits_{\Delta\mu\to0} ad_\omega^{-1}Q$ exists for any fixed $(x,t)$. Now we pay our attention to the limits of ${\hat V}'_4$ and ${\widetilde K}'$. What remains for us to do is to show that $\lim\limits_{\Delta\mu\to0}{\hat V}'_4=V'_4$ and $\lim\limits_{\Delta\mu\to0}{\widetilde K}'=K'$. In fact, for $\hat V'_4$ , we see that when $\Delta \mu\to0$, the limits of the coefficients of $\lambda^i$ ($i=4,3,2,1,0$) are
 \begin{itemize}
    \item[] $\lambda^4$: ${V'}^{(4)}=\omega \to \sigma_3={V}^{(4)}$;
    \item[] $\lambda^3$: ${V'}^{(3)}=Q \to P={V}^{(3)}$;
    \item[] $\lambda^2$: ${V'}^{(2)}=ad_{\omega}^{-1}Q_x+\frac{1}{2}[Q,ad_{\omega}^{-1}Q]_{\mathfrak k}\to -[\sigma_3,P_x]+\frac{1}{2}[P,[P,\sigma_3]]={V}^{(2)}$;
    \item[] $\lambda^1$: \begin{eqnarray*}{V'}^{(1)}&=&\alpha {V'}^{(3)}+(4\beta+\frac{1}{2}\alpha)(ad_\omega^{-1}[Q,[Q,ad_\omega^{-1}Q]_{\mathfrak k}]+\partial_x^{-1}[Q,ad_\omega^{-1}[Q,[Q,ad_\omega^{-1}Q]_{\mathfrak k}]]_{\mathfrak k})\\
    &\to&
    \alpha V^{(3)}-(4\beta
    +\frac{1}{2}\alpha)[\sigma_3,ad_{P}^3\sigma_3]={V}^{(1)};\end{eqnarray*}
    \item[] $\lambda^0$: \begin{eqnarray*}{V'}^{(0)}&=&\alpha {V'}^{(4)}+(4\beta+\frac{1}{2}\alpha)(M'_{\mathfrak m}+M'_{\mathfrak k})
    ~\to~ \alpha V^{(4)}+(4\beta+\frac{1}{2}\alpha)(M_{\mathfrak m}+M_{\mathfrak k})={V}^{(0)}\end{eqnarray*}
\end{itemize}
Here in the last limit, we have used the facts that
    \begin{eqnarray*}
    M'_{\mathfrak m}&=&ad_\omega^{-2}[Q,[Q,ad_\omega^{-1}Q]_{\mathfrak k}]_x-ad_\omega^{-1}[Q,ad_\omega^{-1}[Q,[Q,ad_\omega^{-1}Q]_{\mathfrak k}]]_{\mathfrak m}\\
    &&-ad_\omega^{-1}[Q,\partial_x^{-1}[Q,ad_\omega^{-1}[Q,[Q,ad_\omega^{-1}Q]_{\mathfrak k}]]_{\mathfrak k}]\\
    &\to &[\sigma_3,(4P^3)_x]=M_{\mathfrak m};\\
    M'_{\mathfrak k}&=&\partial_x^{-1}[Q,M'_{\mathfrak m}]_{\mathfrak k}\to\partial_x^{-1}[P,M_{\mathfrak m}]=M_{\mathfrak k}.
    \end{eqnarray*}
From the above exploitations, we have that when $\Delta \mu\to0$,
$$
{\widetilde A}^{\prime}\to {\widetilde A},$$
where ${\widetilde A}$ given by (\ref{tA-con}).
Furthermore, when $\Delta \mu\to0$,
\begin{eqnarray*}
{\widetilde K}'&=&-\lambda^2(4\beta+\frac{1}{2}\alpha)[Q,[Q,ad_{\omega}^{-1}Q]_{\mathfrak k}]~dx\wedge dt\\
&\to&-\lambda^2(4\beta+\frac{1}{2}\alpha)[P,[P,-[\sigma_3,P]]]~dx\wedge dt=-\lambda^2(4\beta+\frac{1}{2}\alpha)ad_P^3\sigma_3~dx\wedge dt={\widetilde K},
\end{eqnarray*}
where ${\widetilde K}$ is given by (\ref{gauge5}).
This indicates that the limits of the connection 1-form ${\widetilde A}^{\prime}$ given by (\ref{Newone}) and the curvature 2-form ${\widetilde K}^{\prime}$ given by (\ref{Newtwo}) are respectively the connection 1-form ${\widetilde A}$ given by (\ref{tA-con}) and the curvature 2-from ${\widetilde K}$ given by (\ref{gauge5}). Hence, as PDEs with prescribed curvature representations, the third-order model (\ref{Q4s}) reverts to the third-order non-integrable model (\ref{gauge}) when $\Delta \mu\to0$. Lemma \ref{Le2} is proved completely.
\end{proof}

Combining Lemma \ref{Le1} with Lemma \ref{Le2}, we thus have

\begin{Theorem}\label{Th3}
When $F_s^{\mu}(k_1,\cdots,k_s)$ equi-collapses to $G(k,n-k)$, the three vortex models (\ref{GGLIE}), (\ref{GGKDV}) and (\ref{Ga4}) in the Hermitian reductive Lie algebra ${\mathfrak g}=\mathfrak u(n)$ revert respectively to
the vortex models (\ref{GLIE1}), (\ref{GKDV1}) and (\ref{GFM11}) in the Hermitian symmetric Lie algebra ${\mathfrak h}=\mathfrak u(n)$.
\end{Theorem}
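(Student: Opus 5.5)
The plan is to split Theorem \ref{Th3} into its three parts and reduce each one to Lemma \ref{Le1} or Lemma \ref{Le2}. The curve models and the potential-function models are linked by the gauge equivalences already established: Theorem \ref{cor} in the integrable cases and Lemma 1 in the third-order case.

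\emph{Leading and second order.} First I identify (\ref{GGLIE}) and (\ref{GGKDV}) with the $x$-derivatives of the Langer-Perline LIE equations $\gamma_t=X^{'(1)}$ and $\gamma_t=X^{'(2)}$, with $\widetilde\gamma=\gamma_x$. This uses the substitution $\gamma_x\mapsto\widetilde\gamma$, $\gamma_{xx}\mapsto\widetilde\gamma_x$, as in the first step of the proof of Theorem \ref{cor}. In the same way, (\ref{GLIE1}) and (\ref{GKDV1}) are the $x$-derivatives of $\gamma_t=X^{(1)}$ and $\gamma_t=X^{(2)}$ in $\mathfrak h$. Lemma \ref{Le1} with $L=1,2$ then says that smooth solutions of $\gamma_t=X^{'(L)}$ revert to smooth solutions of $\gamma_t=X^{(L)}$ as $\Delta\mu\to0$. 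Lemma \ref{lem}(a) guarantees that $x$-derivatives commute with the limit, so the same holds after differentiating in $x$.

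As a consistency check, I will verify directly that the explicit coefficients in (\ref{GGLIE}) converge. For $\mu_i,\mu_j$ in different clusters, $\frac{4}{(\mu_i-\mu_j)^2}\to1$. Summing the blocks $(\cdot)_{ij}+(\cdot)_{ji}$ over such pairs recovers the $\mathfrak m$-projection. Combined with $ad_{\sigma_3}^{-2}=-\mathrm{id}$ on $\mathfrak m$, this yields $-[\widetilde\gamma,\widetilde\gamma_{xx}]$.

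\emph{Third order.} By Lemma \ref{Le2}, smooth solutions of (\ref{Ga4}), equivalently of (\ref{GFM12}), revert to smooth solutions of (\ref{GFM11}). The route is through (\ref{Q4s}) converging to (\ref{gauge}) and the inverse gauge transformation with $E\to\widetilde E$. Putting the three reductions together gives the theorem.

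\emph{Main obstacle.} The delicate point is the terms whose coefficients blow up. Examples are the factors $\frac{16}{(\mu_i-\mu_j)^2(\mu_m-\mu_n)^2}$ and $\frac{32\sqrt{-1}}{(\mu_i-\mu_j)^5}$ when $i,j$ lie in the same cluster, where $\mu_i-\mu_j=O(\Delta\mu)$. To control these I would rely on Lemma \ref{lem}(b): the $\mathfrak m_2$-component $Q_2$ is $O(\Delta\mu)$, so $ad_\omega^{-1}Q_2$ stays bounded. I would then argue, as in the proof of Lemma \ref{Le1}, that the $\mathfrak m_2$-blocks in the explicit formulas reorganize into $\partial_x^{-1}[P,\cdot]_{\mathfrak m_2}$ contributions. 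Together with the $\mathfrak k'$ contributions these assemble into the $\mathfrak k$-projection of $\mathfrak h$. Rather than tracking this block by block, I will pass the limit at the level of $V^{'(L)}$ and the connection forms, where it is already established, and transport it back through the gauge transformation using $\widetilde\gamma=E^{-1}\omega E\to\widetilde E^{-1}\sigma_3\widetilde E$.
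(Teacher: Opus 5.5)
Your proposal is correct and is essentially the paper's own argument. The paper obtains Theorem \ref{Th3} by combining two results: Lemma \ref{Le1} (with $L=1,2$, the curve and potential pictures being linked by Theorem \ref{cor} and the passage $\widetilde\gamma=\gamma_x$), and Lemma \ref{Le2} (via the gauge equivalence of Lemma 1 and the limits of $\widetilde A'$ and $\widetilde K'$). Your direct coefficient check and your discussion of the within-cluster blow-up are supplementary, since, as you note, the limit is controlled at the level of $V^{'(L)}$ and the connection forms, which is exactly where those two lemmas work.
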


Theorem \ref{Th3} indicates that the three basic models (\ref{GGLIE}), (\ref{GGKDV}) and (\ref{Ga4}) are respectively the leading-order, second-order and third-order vortex models in the Hermitian reductive non-symmetric Lie algebra ${\mathfrak g}=u(n)$. This establishes the basic models of vortex filaments in ${\mathfrak g}$, up to the third-order approximation.

\smallskip
Now we shall set $\mathbb CP^2=\frac{U(3)}{U(1)\times U(2)}\subset F^{\mu}_3(1,1,1)=\frac{U(3)}{U(1)\times U(1)\times U(1)}$ and the reduction of their related leading-order vortex models as a concrete example to demonstrate the reduction of ${\rm F}_s^{\rm \mu}(k_1,\cdots,k_s)$ to $G(k,n-k)$ and their vortex models. One knows that $F^{\mu}_3(1,1,1)$ possesses a canonically defined connection with non-vanishing torsion (see, for example, \cite{FK,Nomizu}). Evaluated at the fixed point $o$, the curvature and torsion of this connection are formulated in terms of the Lie bracket on $\mathfrak m$: for $X,Y,Z\in \mathfrak m$,
$$
(R(X,Y)Z)_{o}=-[[X,Y]_{\mathfrak k},Z],~~~T(X,Y)_{o}=-[X,Y]_{\mathfrak m}.$$
Denoting by $e_{ij}$ the $n\times n$- matrix whose entry in the $i$-th row and $j$-th column is $1$ and 0 otherwise, we have that, for $Q\in \mathfrak m$, $Q=\sum\limits_{1\le i<j\le n}(\varphi_{ij}e_{ij}-\bar\varphi_{ji}e_{ji})$.
The generalized nonlinear Schr\"odinger equation (i.e., the $N$-wave equation called in \cite{FK}) in $\mathfrak g=\mathfrak u(n)$ can be expressed as follows (refer to \cite{FK}):
for $\alpha\in \{(ij)~|~1\le i<j\le n\}$,
\begin{eqnarray}
{\varphi_{\alpha}}_t
&=& \frac{1}{\alpha(\omega)}{\varphi_{\alpha}}_{xx}
+\sum_{(\sigma-\delta)(\omega)=0}\frac{1}{\sigma(\omega)}R_{\sigma,-\delta,\beta}^\alpha \varphi_{\beta}\varphi_{\sigma}\overline{\varphi}_{\delta}\nonumber\\
&&-\sum_{\substack{\beta,\sigma\in\Theta^+\\\alpha=\beta-\sigma\in\Theta^+}}
\left( \frac{1}{\sigma(\omega)}\varphi_{\beta}{\overline{\varphi}_{\sigma}}_x
+\frac{1}{\beta(\omega)}{\varphi_{\beta}}_x\overline{\varphi}_{\sigma} \right) T_{\beta,-\sigma}^{\alpha}\nonumber\\
&&+\sum_{\substack{\beta,\sigma\in\Theta^+\\\alpha=\beta+\sigma}}\frac{1}{\sigma(\omega)}\varphi_\beta {\varphi_{\sigma}}_x \, T_{\beta,\sigma}^{\alpha},
\label{1st-root-eq}
\end{eqnarray}
where $\Theta^+=\{e_{ij}\}_{1\le i<j\le n}$.
Now, in the reductive Lie algebra $\mathfrak g=\mathfrak u(3)$, we set
\begin{eqnarray*}
    e_1&=&e_{12},~~~~~~~~e_2=e_{23},~~~~~~~~e_3=e_{13},\\
    e_{-1}&=&-e_{21},~
    ~~~~~e_{-2}=-e_{32},~~~~~e_{-3}=-e_{31}
\end{eqnarray*}
with $[e_1,e_2]=e_{1+2}=e_3$, and have $\Theta^+=\{e_1,e_2,e_3\}$. It is a direct verification that
\begin{eqnarray*}
    \left[\omega,e_1\right]&=&\alpha_1(\omega)e_1=\frac{\sqrt{-1}}{2}(\mu_1-\mu_2)e_1;~~\left[\omega,e_{-1}\right]=\alpha_{-1}(\omega)e_{-1}=-\frac{\sqrt{-1}}{2}(\mu_1-\mu_2)e_{-1};\\
    \left[\omega,e_2\right]&=&\alpha_2(\omega)e_1=\frac{\sqrt{-1}}{2}(\mu_2-\mu_3)e_2;~~\left[\omega,e_{-2}\right]=\alpha_{-2}(\omega)e_{-2}=-\frac{\sqrt{-1}}{2}(\mu_2-\mu_3)e_{-2};\\
    \left[\omega,e_3\right]&=&\alpha_3(\omega)e_1=\frac{\sqrt{-1}}{2}(\mu_1-\mu_3)e_3;~~\left[\omega,e_{-3}\right]=\alpha_{-3}(\omega)e_{-3}=-\frac{\sqrt{-1}}{2}(\mu_1-\mu_3)e_{-3}.
\end{eqnarray*}
From
\begin{eqnarray*}
    R(e_\beta,e_\gamma)e_\eta&=&-[[e_\beta,e_\gamma]_{\mathfrak k},e_{\eta}]=R_{\beta,\gamma,\eta}^{\alpha}e_{\alpha},\\
    T(e_\beta,e_\gamma)&=&-[e_\beta,e_\gamma]_{\mathfrak m}=T_{\beta,\gamma}^{\alpha}e_{\alpha},
\end{eqnarray*}
we see that the associated non-zero components of the curvature and torsion tensors are
\begin{eqnarray}
    R_{1,-1,1}^1&=&R_{2,-2,2}^2=R_{3,-3,3}^3=2,\nonumber\\
    R_{1,-1,3}^3&=&R_{3,-3,1}^1=R_{3,-3,2}^2=R_{2,-2,3}^3=1,\nonumber\\
    R_{1,-1,2}^2&=&R_{2,-2,1}^1=-1, \nonumber\\
    T_{1,2}^3&=&T_{3,-1}^2=-1,\nonumber\\
    T_{3,-2}^1&=&T_{2,1}^3=1. \nonumber\label{T}
\end{eqnarray}
Substituting Eq.(\ref{1st-root-eq}), we have exactly Eq.(\ref{3-wave}). When $\Delta \mu\to 0$, or in other words, $\mu_1\to 1$, $\mu_2, \mu_3\to -1$ and $\varphi_{2} \to 0$ equi-convergently, from (\ref{3-wave}) we have
\begin{eqnarray}
\left\{\begin{array}{l}
\sqrt{-1} {\varphi_1}_t={\varphi_1}_{x x}+2\varphi_1\left(\left|\varphi_1\right|^2-\lim\limits_{\Delta \mu\to0}\frac{\left|\varphi_2\right|^2}{\mu_2-\mu_3}+\frac{1}{2}\left|\varphi_3\right|^2\right)-2\lim\limits_{\Delta \mu\to0}\frac{\varphi_3 \bar{\varphi_2}_x}{\mu_2-\mu_3}, \\
0=2\lim\limits_{\Delta \mu\to0}\frac{{\varphi_2}_{xx}}{\mu_2-\mu_3}+\varphi_3 \bar{\varphi_1}_x+{\varphi_3}_x \bar{\varphi}_1,\\
\sqrt{-1} {\varphi_3}_t={\varphi_3}_{x x}+2\varphi_3\left(\frac{1}{2}\left|\varphi_1\right|^2+\lim\limits_{\Delta \mu\to0}\frac{\left|\varphi_2\right|^2}{\mu_2-\mu_3}+\left|\varphi_3\right|^2\right)-2\lim\limits_{\Delta \mu\to0}\frac{{\varphi_2}_x \varphi_1}{\mu_2-\mu_3}.
\end{array}\label{limit-3-wave}\right.
\end{eqnarray}
Here we have used $\lim\limits_{\Delta \mu\to0} {\varphi_2}_t=0$ by Lemma 2 (a), the existence of $\lim\limits_{\Delta \mu\to0}\frac{{\varphi_2}_x}{\mu_2-\mu_3}$ by Lemma 2 (b) and so for $\lim\limits_{\Delta \mu\to0}\frac{{\varphi_2}_{x x}}{\mu_2-\mu_3}$ etc.
From the second equation in (\ref{limit-3-wave}), we see that $\lim\limits_{\Delta \mu\to0}\left(\frac{{\varphi_2}_x}{\mu_2-\mu_3}\right)_{x}=-\frac{1}{2}(\bar{\varphi}_1\varphi_3)_{x}$.  Hence $\lim\limits_{\Delta \mu\to0}\frac{{\varphi_2}_x}{\mu_2-\mu_3}=-\frac{1}{2}\bar{\varphi}_1\varphi_3$, $\lim\limits_{\Delta \mu\to0}\frac{\varphi_{2}}{\mu_2-\mu_3}=-\frac{1}{2}\int^x\bar{\varphi}_1\varphi_3$ by ignoring the integration constants and consequently $\lim\limits_{\Delta \mu\to0}\frac{\left|\varphi_2\right|^2}{\mu_2-\mu_3}=0$.  Substituting them into the other two equations in (\ref{limit-3-wave}), we have
$$\left\{\begin{array}{l}
\sqrt{-1} {\varphi_1}_t={\varphi_1}_{x x}+2\varphi_1\left(|\varphi_1|^2+|\varphi_3|^2\right), \\
\sqrt{-1} {\varphi_3}_t={\varphi_3}_{x x}+2\varphi_3\left(|\varphi_1|^2+|\varphi_3|^2\right).\\
\end{array}\right.$$
This is exactly the 2-vector nonlinear Schr\"odinger equation relating to $\mathbb {CP}^2\hookrightarrow \mathfrak u(3)$. The reduction of Eq.(\ref{3-wave}) to the  2-vector nonlinear Schr\"odinger equation is obtained, and hence the reduction of the first-order vortex model in the reductive Lie algebra $\mathfrak g=\mathfrak u(3)$ to the first-order vortex model in the symmetric Lie algebra $\mathfrak h=\mathfrak u(3)$, as the complex flag manifold $F^{\mu}_3(1,1,1)$ equi-collapses to $\mathbb CP^2$ in $\mathfrak u(3)$.

\smallskip
In summary, we have extended the three basic vortex models from the Hermitian symmetric Lie algebra $\mathfrak h=\mathfrak u(n)$ to the general Hermitian reductive Lie algebra $\mathfrak g=\mathfrak u(n)$. In this process, the integrable Fordy-Kulish's NLS and Langer-Perline's LIE hierarchies in Hermitian reductive Lie algebras are introduced. The equivalence between $(L+2)$-th Fordy-Kulish's NLS equation and $L$-th Langer-Perline's LIE equation ($L\ge1$) is proved. The fascinating mechanism hidden in the reduction of ${\rm F}_s^{\rm \mu}(k_1,\cdots,k_s)$ to $G(k,n-k)$ and reduction of their related vortex models are investigated in the framework of equi-convergence in $\mathfrak g=\mathfrak u(n)$.

There are many questions unclear in this aspect. For example, (1)  Terng and Uhlenbeck proved that the leading-order vortex model (\ref{GLIE}) in Hermitian symmetric Lie algebras is equivalent to the equation of 1-d Schr\"odinger flow on the Grassmannian $Gr(k,n-k)$. Is this geometric interpretation still true analogously for the leading-order vortex model (\ref{GGLIE}) in Hermitian reductive Lie algebras? (2) The long-time existence of smooth solutions to the leading-order, second-order and third-order vortex models in Hermitian reductive Lie algebras. (3) Except the Hermitian reductive Lie algebras of type A III, there are C I, D III and BD I-types (and also exceptional
cases) (refer to [23,26]). How about the analogous results in those Hermitian reductive Lie algebras?

\section*{Acknowledgement}

The first author is supported by the National Natural Science Foundation of China (No. 12141104). The third author is supported by the National Natural Science Foundation of China (No. 12561010), the Natural Science Foundation of Jiangxi Province (No. 20232BAB201006) and Degree \& Graduate Education Teaching Reform Research Project of Jiangxi Province (No. JXYJG-2023-169).

\appendix
\section{Appendix}
		
\subsection{Detailed computation of \texorpdfstring{$V^{'(4)}$}{V'(4)} } \label{A.1}
Recall the recurrence relations given in  (\ref{eq:1}) (\ref{eq:2}) and (\ref{eq:3}) in \S3:
		\begin{align*}
			V^{'(0)} &= \omega,  \\
			V_{\mathfrak m}^{'(j)} &= ad_{\omega}^{-1}\big(\partial_x V_{\mathfrak m}^{'(j-1)} - [Q, V_{\mathfrak k}^{'(j-1)}] - [Q,V_{\mathfrak m}^{'(j-1)}]_{\mathfrak m}\big), \quad j=1, \cdots, L,  \\
			V_{\mathfrak k}^{'(j)} &= \big(\partial_x ^{-1}[Q, V_{\mathfrak m}^{'(j)}]\big)_{\mathfrak k}, \quad j=1, \cdots, L,
		\end{align*}
we have the followings step by step (the integrate constants are chosen to be zero):
			\begin{eqnarray*}
				V_{\mathfrak m}^{'(1)} &=& ad_{\omega}^{-1}\big(\partial_x V_{\mathfrak m}^{'(0)} - [Q, V_{\mathfrak k}^{'(0)}] - [Q,V_{\mathfrak m}^{'(0)}]_{\mathfrak m}\big)\\
				&=&ad_{\omega}^{-1}(-[Q,\omega])=ad_{\omega}^{-1}[\omega,Q]=Q,\\
				V_{\mathfrak k}^{'(1)} &=& \big(\partial_x ^{-1}[Q, V_{\mathfrak m}^{'(1)}]\big)_{\mathfrak k}=\big(\partial_x ^{-1}[Q, Q]\big)_{\mathfrak k}{=}0;
			\end{eqnarray*}
			\begin{eqnarray*}
				V_{\mathfrak m}^{'(2)} &=& ad_{\omega}^{-1}\big(\partial_x V_{\mathfrak m}^{'(1)} - [Q, V_{\mathfrak k}^{'(1)}] - [Q,V_{\mathfrak m}^{'(1)}]_{\mathfrak m}\big)\\
				&=&ad_{\omega}^{-1}\big(Q_x-[Q,0]-[Q,Q]_{\mathfrak m}\big)=ad_{\omega}^{-1}Q_x,\\
				V_{\mathfrak k}^{'(2)} &=& \big(\partial_x ^{-1}[Q, V_{\mathfrak m}^{'(2)}]\big)_{\mathfrak k}=\big(\partial_x ^{-1}[Q, ad_{\omega}^{-1}Q_x]\big)_{\mathfrak k}\\
				&{=}&\frac{1}{2}[Q,ad_{\omega}^{-1}Q]_{\mathfrak k};
			\end{eqnarray*}
			\begin{eqnarray*}
				V_{\mathfrak m}^{'(3)} &=& ad_{\omega}^{-1}\big(\partial_x V_{\mathfrak m}^{'(2)} - [Q, V_{\mathfrak k}^{'(2)}] - [Q,V_{\mathfrak m}^{'(2)}]_{\mathfrak m}\big)\\
				&=&ad_\omega^{-2}Q_{xx}-\frac{1}{2}ad_{\omega}^{-1}[Q,[Q,ad_{\omega}^{-1}Q]_{\mathfrak k}]-ad_{\omega}^{-1}[Q,ad_{\omega}^{-1}Q_x]_{\mathfrak m},\\
				V_{\mathfrak k}^{'(3)} &=& \big(\partial_x ^{-1}[Q, V_{\mathfrak m}^{'(3)}]\big)_{\mathfrak k}\\
				&=&\big(\partial_x^{-1}[Q,ad_\omega^{-2}Q_{xx}-\frac{1}{2}ad_{\omega}^{-1}[Q,[Q,ad_{\omega}^{-1}Q]_{\mathfrak k}]-ad_{\omega}^{-1}[Q,ad_{\omega}^{-1}Q_x]_{\mathfrak m}]\big)_{\mathfrak k};
			\end{eqnarray*}
			\begin{eqnarray}
				V_{\mathfrak m}^{'(4)} &=& ad_{\omega}^{-1}\big(\partial_x V_{\mathfrak m}^{'(3)} - [Q, V_{\mathfrak k}^{'(3)}] - [Q,V_{\mathfrak m}^{'(3)}]_{\mathfrak m}\big)\nonumber\\
				&=&ad_{\omega}^{-1}\big(ad_\omega^{-2}Q_{xxx}-\frac{1}{2}(ad_{\omega}^{-1}[Q,[Q,ad_{\omega}^{-1}Q]_{\mathfrak k}])_x-(ad_{\omega}^{-1}[Q,ad_{\omega}^{-1}Q_x]_{\mathfrak m})_x\nonumber\\
				&&-ad_{\omega}^{-1}[Q,\big(\partial_x^{-1}[Q,ad_\omega^{-2}Q_{xx}-\frac{1}{2}ad_{\omega}^{-1}[Q,[Q,ad_{\omega}^{-1}Q]_{\mathfrak k}]-ad_{\omega}^{-1}[Q,ad_{\omega}^{-1}Q_x]_{\mathfrak m}]\big)_{\mathfrak k}]\nonumber\\
				&&-ad_{\omega}^{-1}[Q,ad_\omega^{-2}Q_{xx}-\frac{1}{2}ad_{\omega}^{-1}[Q,[Q,ad_{\omega}^{-1}Q]_{\mathfrak k}]-ad_{\omega}^{-1}[Q,ad_{\omega}^{-1}Q_x]_{\mathfrak m}]_{\mathfrak m},\nonumber\\
				V_{\mathfrak k}^{'(4)} &=& \big(\partial_x ^{-1}[Q, V_{\mathfrak m}^{'(4)}]\big)_{\mathfrak k}.\nonumber
			\end{eqnarray}
This gives the explicit expression of ${V'}^{(4)}=V_{\mathfrak m}^{'(4)}+V_{\mathfrak k}^{'(4)}$.

\subsection{ Expressions of $f$ and $g$ in Eq.(\ref{Ga4})}\label{A.2}
		Recall that the third-order vortex model (\ref{GFM12}) is
		\begin{equation}\label{third}
			\widetilde\gamma_t = \left[\widetilde\gamma,\alpha(E^{-1}V_{\mathfrak m}^{'(4)}E)+\left(4\beta + \frac{1}{2}\alpha\right)(E^{-1}M_{\mathfrak{m}}'E)\right],
\nonumber
		\end{equation}
		where
		\begin{eqnarray}
			E^{-1}V_{\mathfrak m}^{'(4)}E&=&E^{-1}\big(ad_{\omega}^{-1}((V_{\mathfrak{m}}^{'(3)})_x-[Q,V_{\mathfrak m}^{'(3)}]_{\mathfrak m}-[Q,V_{\mathfrak k}^{'(3)}])\big)E\nonumber\\
			&=&E^{-1}\big(ad_{\omega}^{-3}Q_{xxx}-\frac{1}{2}(ad_{\omega}^{-2}[Q,[Q,ad_{\omega}^{-1}Q]_{\mathfrak k}])_x-(ad_{\omega}^{-2}[Q,ad_{\omega}^{-1}Q_x]_{\mathfrak m})_x\nonumber\\
			&&-ad_{\omega}^{-1}[Q,V_{\mathfrak m}^{'(3)}]_{\mathfrak m}-ad_{\omega}^{-1}[Q,V_{\mathfrak k}^{'(3)}]\big)E,  \nonumber\label{E^{-1}V^{'(4)}E}\\
			E^{-1}M'_{\mathfrak m}E&=&E^{-1}\big(ad_\omega^{-2}\left[Q,\left[Q,ad_\omega^{-1}Q\right]_{\mathfrak k}\right]_x - ad_\omega^{-1}\left[Q,ad_\omega^{-1}\left[Q,\left[Q,ad_\omega^{-1}Q\right]_{\mathfrak k}\right]\right]_{\mathfrak m}\nonumber\\
			&&-ad_\omega^{-1}\left[Q,\partial_x^{-1}\left[Q,ad_\omega^{-1}\left[Q,\left[Q,ad_\omega^{-1}Q\right]_{\mathfrak k}\right]\right]_{\mathfrak k}\right]\big)E.\label{2222}
		\end{eqnarray}
From the definition that $\widetilde\gamma=E^{-1}\omega E$, we have
		\begin{eqnarray*}
			\widetilde\gamma_x&=&E^{-1}[\omega,Q]E,\\
			\widetilde\gamma_{xx}&=&E^{-1}\big([\omega,Q_x]+[[\omega,Q],Q]\big)E,\\
			\widetilde\gamma_{xxx}&=&E^{-1}\big([\omega,Q_{xx}]+2[[\omega,Q_x],Q]+[[\omega,Q],Q_x]+[[[\omega,Q],Q],Q]\big)E,\\
			\widetilde\gamma_{xxxx}&=&E^{-1}\big([\omega,Q_{xxx}]+3[[\omega,Q_{xx}],Q]+3[[\omega,Q_x],Q_x]+3[[[\omega,Q_x],Q],Q]\\
			&&+2[[[\omega,Q],Q_x],Q]+[[\omega,Q],Q_{xx}]+[[[\omega,Q],Q],Q_x]+[[[[\omega,Q],Q],Q],Q]\big)E.
		\end{eqnarray*}
		Then we obtain
		\begin{eqnarray*}
			E^{-1}[\omega,Q_{xxx}]E&=&\widetilde\gamma_{xxxx}-E^{-1}\big(3[[\omega,Q_{xx}],Q]+3[[\omega,Q_x],Q_x]+3[[[\omega,Q_x],Q],Q]\\
			&&+2[[[\omega,Q],Q_x],Q]+[[\omega,Q],Q_{xx}]+[[[\omega,Q],Q],Q_x]\\
			&&+[[[[\omega,Q],Q],Q],Q]\big)E,
		\end{eqnarray*}
		\begin{eqnarray}
			E^{-1}(ad_{\omega}^{-3}Q_{xxx})E
			&=&\sum_{1\leq i<j \leq s}\frac{16}{(\mu_i-\mu_j)^4}\big(({\widetilde\gamma}_{xxxx})_{ij}+({\widetilde\gamma}_{xxxx})_{ji}\big)-E^{-1}\big(ad_{\omega}^{-4}(3[[\omega,Q_{xx}],Q]\nonumber\\
			&&+3[[\omega,Q_x],Q_x]+3[[[\omega,Q_x],Q],Q]+2[[[\omega,Q],Q_x],Q]+[[\omega,Q],Q_{xx}]\nonumber\\
			&&+[[[\omega,Q],Q],Q_x]+[[[[\omega,Q],Q],Q],Q])\big)E.\nonumber
		\end{eqnarray}
and
		\begin{eqnarray}
			E^{-1}V_{\mathfrak m}^{'(4)}E&=&\sum_{1\leq i<j \leq s}\frac{16}{(\mu_i-\mu_j)^4}\big(({\widetilde\gamma}_{xxxx})_{ij}+({\widetilde\gamma}_{xxxx})_{ji}\big)-E^{-1}\big(ad_{\omega}^{-4}(3[[\omega,Q_{xx}],Q]\nonumber\\
			&&+3[[\omega,Q_x],Q_x]+3[[[\omega,Q_x],Q],Q]+2[[[\omega,Q],Q_x],Q]+[[\omega,Q],Q_{xx}]\nonumber\\
			&&+[[[\omega,Q],Q],Q_x]+[[[[\omega,Q],Q],Q],Q])+\frac{1}{2}(ad_{\omega}^{-2}[Q,[Q,ad_{\omega}^{-1}Q]_{\mathfrak k}])_x\nonumber\\
			&&+(ad_{\omega}^{-2}[Q,ad_{\omega}^{-1}Q_x]_{\mathfrak m})_x+[Q,V_{\mathfrak m}^{'(3)}]_{\mathfrak m}+[Q,V_{\mathfrak k}^{'(3)}]\big)E.\label{3333}
		\end{eqnarray}
Based on (\ref{2222}) and (\ref{3333}), we set
		\begin{eqnarray*}
			f&=&-E^{-1}\bigg(\big(ad_{\omega}^{-4}(3[[\omega,Q_{xx}],Q]+[[\omega,Q],Q_{xx}]+3[[\omega,Q_x],Q_x]+3[[[\omega,Q_x],Q],Q] \\
			&&+2[[[\omega,Q],Q_x],Q]+[[[\omega,Q],Q],Q_x]+[[[[\omega,Q],Q],Q],Q])\big) +\frac{1}{2}(ad_{\omega}^{-2}[Q,[Q,ad_{\omega}^{-1}Q]_{\mathfrak k}])_x\\
			&&+(ad_{\omega}^{-2}[Q,ad_{\omega}^{-1}Q_x]_{\mathfrak m})_x+ad_{\omega}^{-1}[Q,V_{\mathfrak k}^{'(3)}]+[Q,V_{\mathfrak m}^{'(3)}]_{\mathfrak m}\bigg)E\\
			g&=&E^{-1}M'_{\mathfrak m}E \\
			&=&E^{-1}\big(ad_\omega^{-2}\left[Q,\left[Q,ad_\omega^{-1}Q\right]_{\mathfrak k}\right]_x - ad_\omega^{-1}\left[Q,ad_\omega^{-1}\left[Q,\left[Q,ad_\omega^{-1}Q\right]_{\mathfrak k}\right]\right]_{\mathfrak m}\\
			&&-ad_\omega^{-1}\left[Q,\partial_x^{-1}\left[Q,ad_\omega^{-1}\left[Q,\left[Q,ad_\omega^{-1}Q\right]_{\mathfrak k}\right]\right]_{\mathfrak k}\right]\big)E.
		\end{eqnarray*}
One verifies that $E^{-1}QE=\sum\limits_{1\leq i<j\leq s}\frac{-4}{(\mu_i-\mu_j)^2}\big([{\widetilde\gamma},{\widetilde\gamma}_x]_{ij}+[{\widetilde\gamma},{\widetilde\gamma}_x]_{ji}\big)$ is expressed by $\widetilde\gamma$ and $\widetilde\gamma_x$. It is also true for $E^{-1}Q_xE$ (resp. $E^{-1}Q_{xx}E$) up to the second-order derivative (resp. third-order derivative) of $\widetilde\gamma$. Therefore, $f$ and $g$ are the lower order derivative terms of $\widetilde\gamma$.

\end{document}